\def\inte#1{
\displaystyle\mathop{#1\kern0pt}^\circ }
\newcommand{\ef}{ \hfill $ \blacksquare $ \vskip 3mm}
\newcommand{\beqo}{\begin{equation*}}
\newcommand{\eeqo}{\end{equation*}}
\newcommand{\beno}{\begin{eqnarray*}}
\newcommand{\eeno}{\end{eqnarray*}}
\numberwithin{equation}{section}
\let\pa=\partial
\let\al=\alpha
\let\d=\delta
\let\e=\varepsilon
\let\r=\rho
\let\f=\frac
\let\p=\psi
\let\D=\Delta
\let\Om=\Omega
\let\wt=\widetilde
\let\tri=\triangle
\let\ep=\varepsilon
\def\cA{{\mathcal A}}
\def\cB{{\mathcal B}}
\def\cC{{\mathcal C}}
\def\cM{{\mathcal M}}
\def\cR{{\mathcal R}}
\def\cS{{\mathcal S}}
\def\grad{\nabla}
\def\virgp{\raise 2pt\hbox{,}}
\def\cdotpv{\raise 2pt\hbox{;}}
\def\eqdefa{\buildrel\hbox{\footnotesize def}\over =}
\def\C{\mathop{\mathbb C\kern 0pt}\nolimits}
\def\DD{\mathop{\mathbb D\kern 0pt}\nolimits}
\def\EE{\mathop{{\mathbb E \kern 0pt}}\nolimits}
\def\K{\mathop{\mathbb K\kern 0pt}\nolimits}
\def\N{\mathop{\mathbb N\kern 0pt}\nolimits}
\def\Q{\mathop{\mathbb Q\kern 0pt}\nolimits}
\def\R{\mathop{\mathbb R\kern 0pt}\nolimits}
\def\SS{\mathop{\mathbb S\kern 0pt}\nolimits}
\def\ZZ{\mathop{\mathbb Z\kern 0pt}\nolimits}
\def\TT{\mathop{\mathbb T\kern 0pt}\nolimits}
\def\P{\mathop{\mathbb P\kern 0pt}\nolimits}
\newcommand{\la}{\lambda}
\newcommand{\Z}{{\ZZ}}
\def\dv{\mbox{div}}
\def\dive{\mathop{\rm div}\nolimits}
\def\Supp{\mathop{\rm Supp}\nolimits\ }
\def\no{\noindent}
\def\na{\nabla}
\def\p{\partial}
\def\th{\theta}
\newcommand{\beq}{\begin{equation}}
\newcommand{\eeq}{\end{equation}}
\newcommand{\ben}{\begin{eqnarray}}
\newcommand{\een}{\end{eqnarray}}
\newcommand{\andf}{\quad\hbox{and}\quad}
\newtheorem{defi}{Definition}[section]
\newtheorem{thm}{Theorem}[section]
\newtheorem{lem}{Lemma}[section]
\newtheorem{rmk}{Remark}[section]
\newtheorem{prop}{Proposition}[section]
\renewcommand{\theequation}{\thesection.\arabic{equation}}
\begin{document}
\title[Wellposedness of incompressible inhomogeneous NS equations]
{ Global wellposdeness to incompressible inhomogeneous
 fluid system with bounded density and  non-Lipschitz velocity }
\author[J. Huang]{Jingchi Huang}\address[J. HUANG]
{Academy of Mathematics $\&$ Systems Science, Chinese Academy of
Sciences, Beijing 100190, P. R. CHINA} \email{jchuang@amss.ac.cn}
\author[M. PAICU]{Marius Paicu}
\address [M. PAICU]
{Universit\'e  Bordeaux 1\\
 Institut de Math\'ematiques de Bordeaux\\
F-33405 Talence Cedex, France}
\email{marius.paicu@math.u-bordeaux1.fr}
\author[P. ZHANG]{Ping Zhang}%
\address[P. ZHANG]
 {Academy of
Mathematics $\&$ Systems Science and  Hua Loo-Keng Key Laboratory of
Mathematics, The Chinese Academy of Sciences\\
Beijing 100190, CHINA } \email{zp@amss.ac.cn}
\date{12/Oct/2012}
\maketitle
\begin{abstract}
In this paper, we first prove the global existence of weak solutions
to the d-dimensional  incompressible inhomogeneous  Navier-Stokes
equations with initial data $a_0\in L^\infty(\R^d),$
$u_0=(u_0^h,u_0^d)\in \dot{B}^{-1+\f{d}p}_{p,r}(\R^d),$ which
satisfies
$\bigl(\mu\|a_0\|_{L^\infty}+\|u_0^h\|_{\dot{B}^{-1+\frac{d}p}_{p,r}}\bigr)\exp\bigl(
C_r{\mu^{-2r}}\|u_0^d\|_{\dot{B}^{-1+\frac{d}p}_{p,r}}^{2r}\bigr)\leq
c_0\mu$ for some positive constants $c_0, C_r$ and $1< p<d,$
$1<r<\infty.$ The regularity of the initial velocity is critical to
the scaling of this system and is general enough to generate
non-Lipschitz velocity field. Furthermore, with additional
regularity assumption on the initial velocity or on the initial
density, we can also prove the uniqueness of such solution. We
should mention that the classical maximal $L^p(L^q)$ regularity
theorem for the heat kernel plays an essential role in this context.
\end{abstract}

\noindent {\sl Keywords:} Inhomogeneous  Navier-Stokes equations,
maximal $L^p(L^q)$ regularity for heat kernel,  Littlewood-Paley
theory. \

\vskip 0.2cm

\noindent {\sl AMS Subject Classification (2000):} 35Q30, 76D03  \\

\setcounter{equation}{0}
\section{Introduction}

In this paper, we consider the  global wellposedness to the
following d-dimensional  incompressible inhomogeneous Navier-Stokes
equations with  the regularity of the initial velocity being almost
critical and the initial density being a bounded positive function,
which satisfies some nonlinear smallness condition,
\begin{equation}
 \left\{\begin{array}{l}
\displaystyle \pa_t \rho + \dv (\rho u)=0,\qquad (t,x)\in\R^+\times\R^d, \\
\displaystyle \pa_t (\rho u) + \dv (\rho u\otimes u) -\mu\D u+\grad\Pi=0, \\
\displaystyle \dv\, u = 0, \\
\displaystyle \rho|_{t=0}=\rho_0,\quad \rho u|_{t=0}=\r_0u_0,
\end{array}\right. \label{1.1}
\end{equation}
where $\rho, u=(u^h, u^d)$ stand for the density and  velocity of
the fluid respectively, $\Pi$  is a scalar pressure function,
 and $\mu$ the
viscosity coefficient. Such system describes a fluid which is
obtained by mixing two immiscible fluids that are incompressible and
that have different densities. It may also describe a fluid
containing a melted substance. We remark that our hypothesis on the
density is of physical interest, which corresponds to the case of a
mixture of immiscible fluids with different and bounded densities.

In particular, we shall focus on the global wellposedness of
\eqref{1.1} with small homogeneity for the initial density function
in $L^\infty(\R^d)$ and small horizontal components of the velocity
compared with its vertical component. This approach was already
applied by Paicu and Zhang \cite{PZ1, PZ2} for 3-D anisotropic
Navier-Stokes equations and for inhomogeneous Navier-Stokes system
in the framework of Besov spaces.
 The main novelty of the present paper is to consider the initial density function in $L^\infty(\R^d),$
  which is close enough to some positive constant. Then in order to handle  the
nonlinear terms appearing in \eqref{1.1}, we need to use the maximal
regularity effect for the classical heat equation. We should mention
that the initial data have scaling invariant regularities
 and the global weak
solutions obtained here, under a nonlinear-type smallness condition,
also belong to the critical spaces. Moreover, the regularity of the
velocity field obtained in this paper is general enough to include
the case of non-Lipschitz vector fields.

When  $\rho_0$ is bounded away from 0, Kazhikov \cite{KA} proved
that: the inhomogeneous Navier-Stokes equations \eqref{1.1} has at
least one global weak solutions in the energy space. In addition, he
also proved the global existence of strong solutions to this system
for small data in three space dimensions and all data in two
dimensions. However, the uniqueness of both type weak solutions has
not be solved.  Lady\v zenskaja and Solonnikov  \cite{LS} first
addressed the question of unique resolvability of (\ref{1.1}). More
precisely, they  considered the system \eqref{1.1} in a bounded
domain $\Om$ with homogeneous Dirichlet boundary condition for $u.$
Under the assumption that $u_0\in W^{2-\frac2p,p}(\Om)$ $(p>d)$ is
divergence free and vanishes on  $\p\Om$ and that $\r_0\in C^1(\Om)$
is bounded away from zero, then they \cite{LS} proved
\begin{itemize}
\item Global well-posedness in dimension $d=2;$
\item Local well-posedness in dimension $d=3.$ If in addition $u_0$ is small in $W^{2-\frac2p,p}(\Om),$
then global well-posedness holds true.
\end{itemize}
Similar results were obtained by Danchin \cite{danchin2} in $\R^d$
with initial data in the almost critical Sobolev spaces.  Abidi, Gui
and Zhang \cite{A-G-Z} investigated the large time decay and
stability to any given global smooth solutions of \eqref{1.1}, which
in particular implies the global wellposedness of 3-D inhomogeneous
Navier-Stokes equations with axi-symmetric initial data provided
that there is no swirl part for the initial velocity field  and the
initial density is close enough to a positive constant. In general,
when  the viscosity coefficient, $\mu(\rho),$ depends on $\rho,$
Lions \cite{LP} proved the global existence of weak solutions to
\eqref{1.1}  in any space dimensions.

In the case when the density function $\rho$ is away from zero, we
denote by $a\eqdefa\frac{1}{\rho}-1$, then the system (\ref{1.1})
can be
 equivalently reformulated as
\begin{equation}\label{INS}
 \quad\left\{\begin{array}{l}
\displaystyle \pa_t a + u \cdot \grad a=0,\qquad (t,x)\in \R^+\times\R^d,\\
\displaystyle \pa_t u + u \cdot \grad u+ (1+a)(\grad\Pi-\mu\D u)=0, \\
\displaystyle \dv\, u = 0, \\
\displaystyle (a, u)|_{t=0}=(a_0, u_{0}).
\end{array}\right.
\end{equation}
Notice that just as the classical Navier-Stokes system, the
inhomogeneous Navier-Stokes system (\ref{INS}) also  has a scaling.
More precisely,  if $(a, u)$ solves (\ref{INS}) with initial data
$(a_0, u_0)$, then for $\forall \, \ell>0$,
\begin{equation}\label{1.2}
(a, u)_{\ell} \eqdefa (a(\ell^2\cdot, \ell\cdot), \ell u(\ell^2
\cdot, \ell\cdot))\quad\mbox{and}\quad (a_0,u_0)_\ell\eqdefa
(a_0(\ell\cdot),\ell u_0(\ell\cdot))
\end{equation}
 $(a, u)_{\ell}$ is also a solution of (\ref{INS}) with initial data $(a_0,u_0)_\ell$.

 In \cite{danchin}, Danchin studied in general space dimension $d$
the unique solvability of the system (\ref{INS}) in scaling
invariant homogeneous Besov spaces, which generalized the celebrated
results by Fujita and Kato \cite{fujitakato} devoted to the
classical Navier-Stokes system. In particular, the norm of $(a, u)
\in \dot{B}_{2,\infty}^{\frac{d}{2}}(\R^d)\cap
L^{\infty}(\R^d)\times \dot{B}_{2,1}^{\frac{d}{2}-1}(\R^d)$ is
scaling invariant  under the change of scale of \eqref{1.2}. In this
case, Danchin proved that if the initial data $(a_0,u_0) \in
\dot{B}_{2,\infty}^{\frac{d}{2}}(\R^d)\cap L^{\infty}(\R^d)\times
\dot{B}_{2, 1}^{\frac{d}{2}-1}(\R^d)$ with $a_0$ sufficiently small
in $ \dot{B}_{2, \infty}^{\frac{d}{2}}(\R^d)\cap L^{\infty}(\R^d)$,
then the system (\ref{INS}) has a unique local-in-time solution.
Abidi \cite{abidi} proved that if $1<p<2d,$ $
0<\underline{\mu}<\tilde{\mu}(a),$ $u_0\in
\dot{B}_{p,1}^{\frac{d}p-1}(\R^d)$ and $a_0\in
\dot{B}_{p,1}^{\frac{d}p}(\R^d),$ then (\ref{INS}) has a global
solution provided that $\|a_0\|_{\dot{B}_{p,1}^{\frac{d}p}}
+\|u_0\|_{\dot{B}_{p,1}^{\frac{d}p-1}}\leq c_0$ for some $c_0$
sufficiently small. Furthermore, such a  solution is unique if
$1<p\leq d.$ This result generalized the corresponding results in
\cite{danchin, danchin2} and was improved by Abidi and Paicu in
\cite{AP} when $\tilde{\mu}(a)$ is a positive constant by using
different Lebesgue indices for the density and for the velocity.
More precisely, for $a_0\in\dot{B}^{\frac{d}{q}}_{q,1}(\R^d)$ and
$u_0\in \dot{B}^{-1+\frac{d}{p}}_{p,1}(\R^d)$ with $|\frac
1{p}-\frac{1}{q}|<\frac 1d$ and $\frac{1}{p}+\frac{1}{q}>\frac 1d,$
they obtained the existence of solutions to \eqref{INS}  and under a
more restrictive condition: $\frac{1}{p}+\frac{1}{q}\geq
\frac{2}{d},$ they proved the uniqueness of this solution. In
particular, with a well prepared regularity for the density
function, this result implies the global existence of solutions to
\eqref{INS} for any $1<p<\infty$ and the uniqueness of such solution
when $1<p< 2d$.
 Very recently, Danchin and Mucha \cite{dm} filled the gap for
 the uniqueness result in \cite{abidi}  with $p\in (d,2d)$
through Langrage approach, and Abidi, Gui and Zhang relaxed the
smalness condition for $a_0$ in \cite{AGZ2, AGZ3}.

 On the other
hand, when the initial density $\r_0\in L^\infty(\Om)$ with positive
lower bound and $u_0\in H^2(\Om),$ Danchin and Mucha \cite{dm2}
proved the local wellposedness of \eqref{1.1}. They also proved the
global wellposedness result provided that the fluctuation of the
initial density is sufficiently small, and initial velocity is small
in $B^{2-\f2q}_{q,p}(\Om)$ for $1<p<\infty, d<q<\infty$ in 3-D and
any velocity in $B^1_{4,2}(\Om)\cap L^2(\Om)$ in 2-D. Motivated by
Proposition \ref{prop1.1} below concerning the alternative
definition of Besov spaces (see Definition \ref{def1.1}) with
negative indices and \cite{Kato}, where Kato solved the local (resp.
global) wellposedness of 3-D classical Navier-Stokes system through
elementary $L^p$ approach, we shall investigate the global existence
of weak solutions to \eqref{INS} with initial data $a_0\in
L^\infty(\R^d)$ and $u_0\in \dot{B}^{-1+\f{d}p}_{p,r}(\R^d)$ for
$p\in (1,d)$ and $r\in (1,\infty),$ which satisfies the nonlinear
smallness condition \eqref{small1}. Furthermore, if we assume, in
addition, $u_0\in  \dot{B}^{-1+\f{d}p+\e}_{p,r}(\R^d)$ for some
sufficiently small  $\e>0$, we can also prove the uniqueness of such
solution.

\begin{defi}\label{defi1.1} {\sl We call $(a,u, \na\Pi)$ a global weak solution of \eqref{INS}
if
\begin{itemize}
\item for any test function $\phi\in
C^\infty_c([0,\infty)\times\R^d),$ there holds
\beq\label{def1.1a}\begin{split}
\int_0^\infty\int_{\R^d}&a(\p_t\phi+u\cdot\na\phi)\,dx\,dt+\int_{\R^d}\phi(0,x)a_0(x)\,dx=0,\\
& \int_0^\infty\int_{\R^d}\dive u\phi\,dx\,dt=0,\end{split} \eeq

\item for any vector valued function $\Phi=(\Phi^1,\cdots,\Phi^d)\in
C_c^\infty([0,\infty)\times\R^d),$  one has \beq\label{def1.1b}
\int_0^\infty\int_{\R^d}\Bigl\{u\cdot\p_t\Phi-(u\cdot\na u) \cdot
\Phi +(1+a)(\mu\D
u-\na\Pi)\cdot\Phi\Bigr\}\,dx\,dt+\int_{\R^d}u_0\cdot\Phi(0,x)\,dx=0.
\eeq
\end{itemize}}
\end{defi}

We denote the vector field by $u=(u^h,u^d)$ where
$u^h=(u^1,u^2,...,u^{d-1})$. Our first main result in this paper is
as follows:

\begin{thm}\label{thm1.1}
{\sl Let  $p\in (1,d)$ and $r\in(1,\infty).$ Let $a_0\in
L^\infty(\R^d)$ and $ u_0\in \dot{B}^{-1+\f{d}p}_{p,r}(\R^d).$ Then
there exist positive constants $c_0, C_r$ so that if \beq
\label{small1} \eta \eqdefa
\bigl(\mu\|a_0\|_{L^\infty}+\|u_0^h\|_{\dot{B}^{-1+\f{d}p}_{p,r}}\bigr)\exp\Bigl\{C_r
\mu^{-2r}\|u_0^d\|_{\dot{B}^{-1+\f dp}_{p,r}}^{2r}\Bigr\}\leq
c_0\mu, \eeq \eqref{INS} has a global weak solution $(a,u)$ in the
sense of Definition \ref{defi1.1}, which satisfies

\no (1)\ when $p\in (1, \f{dr}{3r-2}],$
 \beq\label{thm1a}
\begin{split}
&\mu^{\f1r}\|\D u^h\|_{L^r(\R^+;L^{\f{dr}{3r-2}})}+
\mu^{\f1{2r}}\|\na u^h\|_{L^{2r}(\R^+;L^{\f{dr}{2r-1}})}
\leq C\eta,\\
&\mu^{\f1r}\|\D u^d\|_{L^r(\R^+;L^{\f{dr}{3r-2}})}
+\mu^{\f1{2r}}\|\na u^d\|_{L^{2r}(\R^+;L^{\f{dr}{2r-1}})}\leq
C\|u_{0}^d\|_{\dot{B}^{-1+\f
dp}_{p,r}}+ c\mu,\\
 &\mu^{\f1r}\|\na\Pi\|_{L^r;(\R^+;L^{\f{dr}{3r-2}})}\leq C\eta\bigl(\|u_{0}^d\|_{\dot{B}^{-1+\f
dp}_{p,r}}+ c\mu\bigr);\end{split} \eeq

\no (2)\ when $p\in (\f{dr}{3r-2}, d),$ \beq\label{thm1b}
\begin{split}\mu^{\f12(3-\f{d}{p_1})}&\|t^{\alpha_1} \D
 u^h\|_{L^{2r}(\R^+; L^{p_1})} +
\mu^{1-\f{d}{2p_2}}\bigl(\|t^{\beta_1}\na u^h\|_{L^{2r}(\R^+;
L^{p_2})}+
 \|t^{\beta_2}\na u^h\|_{L^{\infty}(\R^+; L^{p_2})}\bigr)\\
&+\mu^{\f12(1-\f{d}{p_3})}\bigl(\|t^{\gamma_1}u^h\|_{L^{\infty}(\R^+;L^{p_3})}
+
\|t^{\gamma_2}u^h\|_{L^{2r}(\R^+; L^{p_3})}\bigr)\leq C\eta,\\
\mu^{\f12(3-\f{d}{p_1})}&\|t^{\alpha_1} \D u^d\|_{L^{2r}(\R^+;
L^{p_1})}
 + \mu^{1-\f{d}{2p_2}}\bigl(\|t^{\beta_1}\na u^d\|_{L^{2r}_t(L^{p_2})}+\|t^{\beta_2}\na u^d\|_{L^{\infty}(\R^+; L^{p_2})}\bigr)\\
 &+
\mu^{\f12(1-\f{d}{p_3})}\bigl(\|t^{\gamma_1}u^d\|_{L^{\infty}(\R^+;
L^{p_3})} +\|t^{\gamma_2}u^d\|_{L^{2r}(\R^+; L^{p_3})}\bigr)
 \leq 2C\| u_{0}^d\|_{\dot{B}^{-1+\f{d}p}_{p,r}} +c\mu,
\end{split}
\eeq and \beq\label{thm1c}
\begin{split}
&\mu^{\f12(3-\f{d}{p_1})}\|t^{\alpha_2} \D u\|_{L^{r}(\R^+;
L^{p_1})} \leq C\bigl(\|
u_{0}\|_{\dot{B}^{-1+\f{d}p}_{p,r}}+\f{\eta}\mu(\|
u_{0}^d\|_{\dot{B}^{-1+\f{d}p}_{p,r}}+c\mu+\eta)\bigr),\\
&\mu^{\f12(3-\f{d}{p_1})}\bigl(\|t^{\alpha_1} \na\Pi\|_{L^{2r}(\R^+;
L^{p_1})}+\|t^{\alpha_2} \na\Pi\|_{L^{r}(\R^+; L^{p_1})}\bigr)\leq
C\eta\bigl(\| u_{0}^d\|_{\dot{B}^{-1+\f{d}p}_{p,r}}+c\mu\bigr),
\end{split}\eeq
 for some small enough constant $c,$ where $p_1, p_2, p_3$
satisfy $\max (p, \f{dr}{2r-1})<p_1<d$,
 and $\f{dr}{r-1}<p_3<\infty$ so that $\f1{p_2}+\f1{p_3}=\f1{p_1},$  the indices
 $\alpha_1$, $\alpha_2,$
$\beta_1$, $\beta_2$, $\gamma_1$, $\gamma_2$ are determined by
 \beq\label{indexad}\begin{split}
 & \alpha_1=\f12(3-\f{d}{p_1})-\f1{2r},\quad
\beta_1=\f12(2-\f{d}{p_2})-\f1{2r} \andf
\gamma_1=\f12(1-\f{d}{p_3}),\\
& \alpha_2=\f12(3-\f{d}{p_1})-\f1{r},\quad \
\beta_2=\f12(2-\f{d}{p_2}) \andf
\gamma_2=\f12(1-\f{d}{p_3})-\f1{2r}. \end{split} \eeq
 Furthermore, if we assume, in addition, that $u_0\in
\dot{B}^{-1+\f{d}p+\e}_{p,r}(\R^d)$ for $0<\ep<\min\{ \f1r, 1-\f1r,
\f{d}p-1\}$, then such a global solution is unique.
 }
\end{thm}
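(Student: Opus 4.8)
\noindent\emph{Plan of the proof.}
The solution will be produced by a compactness argument applied to a regularized system, so the heart of the matter is to derive the a priori bounds \eqref{thm1a}--\eqref{thm1c} uniformly in the regularization parameter and then to close them by a continuity argument using the smallness condition \eqref{small1}. First I would mollify the data into $(a_0^n,u_0^n)$ and construct approximate solutions $(a^n,u^n,\na\Pi^n)$ of \eqref{INS} by a Friedrichs-type scheme (in the spirit of \cite{LP,dm2}). Since $a$ obeys the transport equation $\p_t a+u\cdot\na a=0$ with $\dive\,u=0$, one has the free bound $\|a^n(t)\|_{L^\infty}=\|a_0^n\|_{L^\infty}\le\|a_0\|_{L^\infty}$ for all $t\ge 0$, which is essentially the only information one can keep on the density at this level of regularity.

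The core step is the velocity estimate. Writing the momentum equation of \eqref{INS} in Duhamel form, $u^n(t)=e^{t\mu\D}u_0^n-\int_0^t e^{(t-s)\mu\D}\bigl(u^n\cdot\na u^n+a^n(\na\Pi^n-\mu\D u^n)\bigr)(s)\,ds$, the free part $e^{t\mu\D}u_0^n$ is controlled in the (un-weighted, case (1), or $t^{\alpha_i},t^{\beta_i},t^{\gamma_i}$-weighted, case (2)) norms of \eqref{thm1a}--\eqref{thm1c} by $\|u_0\|_{\dot{B}^{-1+\f{d}{p}}_{p,r}}$, thanks to the heat-flow characterization of $\dot{B}^{-1+\f{d}{p}}_{p,r}$ (Proposition \ref{prop1.1}); the time exponents \eqref{indexad} and the Lebesgue indices $p_1,p_2,p_3$ are precisely those needed to keep that characterization and the scaling \eqref{1.2} consistent when one works at Lebesgue exponents different from the scaling-critical one. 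For the Duhamel term I would invoke the classical maximal $L^p(L^q)$ regularity of the heat semigroup together with H\"older's inequality in time, which estimates each nonlinear product ($u^h\cdot\na u^d$, $u^d\,\na_h\cdot u^h$, $a^n\D u^n$, $a^n\na\Pi^n$, and the like) in the appropriate $L^r_t(L^{p_i})$ or $L^{2r}_t(L^{p_i})$ by a product of two of the quantities already being controlled. The pressure is handled through the elliptic equation $\D\Pi^n=-\dive(u^n\cdot\na u^n)-\dive\bigl(a^n(\na\Pi^n-\mu\D u^n)\bigr)$: since $\|a^n\|_{L^\infty}\lesssim c_0\mu$ is small, the terms $\|a^n\na\Pi^n\|$ and $\mu\|a^n\D u^n\|$ are absorbed by the left-hand side, leaving $\na\Pi^n$ bounded by the quadratic velocity term.

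The structural device that makes the scheme close, following Paicu--Zhang \cite{PZ1,PZ2}, is the horizontal/vertical splitting: using $\dive\,u=0$ one rewrites $u\cdot\na u^d=2u^h\cdot\na_h u^d-\na_h\cdot(u^d u^h)$, so that every nonlinear term in the $u^d$-equation carries a factor $u^h$ or $a^n$. Introducing a nonlinear functional $\cE_n(T)$ collecting the left-hand sides of \eqref{thm1a}--\eqref{thm1c}, with a horizontal part $\cE_n^h(T)$ meant to be of size $O(\eta)$ and a vertical part $\cE_n^d(T)$ of size $O(\|u_0^d\|_{\dot{B}^{-1+\f{d}{p}}_{p,r}})$, the estimates above yield a closed system of the schematic form
\beqo
\begin{split}
\cE_n^h(T)&\le C\bigl(\mu\|a_0\|_{L^\infty}+\|u_0^h\|_{\dot{B}^{-1+\f{d}{p}}_{p,r}}\bigr)
+\frac{C}{\mu}\,\cE_n(T)\bigl(\cE_n^h(T)+\mu\|a_0\|_{L^\infty}\bigr),\\
\cE_n^d(T)&\le C\|u_0^d\|_{\dot{B}^{-1+\f{d}{p}}_{p,r}}+\frac{C}{\mu}\,\cE_n(T)^2.
\end{split}
\eeqo
A continuity argument in $T$, the exponential factor in \eqref{small1} being exactly what absorbs the non-small vertical contribution through the resulting Gronwall iteration, then closes these bounds globally and uniformly in $n$. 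I expect this closure — keeping track of all the powers of $\mu$ and checking that the admissible ranges $\max(p,\f{dr}{2r-1})<p_1<d$, $\f{dr}{r-1}<p_3<\infty$ together with \eqref{indexad} make every H\"older-in-time product legitimate — to be the main technical obstacle.

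Finally, the uniform bounds provide, via the equation, enough control on $\p_t u^n$ to run an Aubin--Lions compactness argument, giving strong $L^2_{loc}$ convergence of $u^n$ and hence of $u^n\cdot\na u^n$; $a^n$ converges weak-$*$ in $L^\infty$, while the strong convergence of $\na u^n$ and $\na\Pi^n$ in the weighted spaces allows one to pass to the limit in $a^n(\mu\D u^n-\na\Pi^n)$, so that $(a,u,\na\Pi)$ is a global weak solution in the sense of Definition \ref{defi1.1} satisfying \eqref{thm1a}--\eqref{thm1c}. For the uniqueness part, under the additional hypothesis $u_0\in\dot{B}^{-1+\f{d}{p}+\e}_{p,r}$ I would first propagate this $\e$-regularity (the same scheme gives the weighted bounds with $\f{d}{p}$ replaced by $\f{d}{p}+\e$), and then estimate the difference $(\delta a,\delta u)=(a_1-a_2,u_1-u_2)$ of two solutions with the same data: $\delta a$ in a space of regularity one notch below that of $a$ (to absorb the loss in $\p_t\delta a+u_1\cdot\na\delta a=-\delta u\cdot\na a_2$), and $\delta u$ by maximal regularity using that the $a_i$ are small; the restriction $0<\e<\min\{\f1r,1-\f1r,\f{d}{p}-1\}$ is precisely what makes all the product and time-H\"older estimates for the difference work, after which a Gronwall inequality forces $\delta a=\delta u=0$.
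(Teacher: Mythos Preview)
Your outline for the a~priori estimates is essentially the paper's strategy: Duhamel form, heat-flow characterization of $\dot B^{-1+d/p}_{p,r}$, maximal $L^p(L^q)$ regularity, the algebraic rewriting via $\dive u=0$ so that every term in the $u^d$-equation carries a small factor, and a continuity argument. The paper implements the ``Gronwall'' step by an explicit exponential weight $u_{\la,n}=u_n\exp\{-\int_0^t(\la_1 f_{1,n}+\la_2 f_{2,n})\,dt'\}$ rather than iterating an inequality, but that is a matter of presentation.

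There are, however, two genuine gaps.

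\textbf{Compactness.} You claim that weak-$\ast$ convergence of $a^n$ together with ``strong convergence of $\na u^n$ and $\na\Pi^n$'' lets you pass to the limit in $a^n(\mu\D u^n-\na\Pi^n)$. But Aubin--Lions gives you strong local convergence of $u^n$ and $\na u^n$, not of $\D u^n$ or $\na\Pi^n$; for these you only have weak convergence in $L^r_{loc}(L^{p_1})$. A weak-$\ast$ limit times a weak limit does not in general converge to the product, so the argument as written does not close. The paper resolves this by upgrading $a^n\to a$ to \emph{strong} convergence in every $L^m_{loc}(\R^+\times\R^d)$, $m<\infty$, via the DiPerna--Lions renormalization trick: one passes to the limit in $\p_t a_n^2+\dive(u_n a_n^2)=0$ and compares with $\p_t a^2+\dive(u a^2)=0$ to get $\overline{a^2}=a^2$. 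With $a^n\to a$ strongly, ``strong $\times$ weak'' suffices for $a^n\D u^n$ and $a^n\na\Pi^n$.

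\textbf{Uniqueness.} Your Eulerian stability scheme writes $\p_t\delta a+u_1\cdot\na\delta a=-\delta u\cdot\na a_2$ and proposes to estimate $\delta a$ ``one notch below'' the regularity of $a$. But here $a_2\in L^\infty$ only, with no derivative available; the source $\delta u\cdot\na a_2$ is not even a distribution you can control, and there is no usable ``one notch below $L^\infty$'' in this setting. This is exactly the obstruction that forces the paper to switch to the Lagrangian formulation \`a la Danchin--Mucha: in Lagrangian variables $b(t,y)=a(t,X(t,y))$ satisfies $b_t=0$, so $b\equiv a_0$ for \emph{both} solutions and $\delta b=0$ identically. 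The difference system then involves only $(\d v,\d P)$ and the matrices $A_i=(\na X_i)^{-1}$, and all the error terms are controlled by $\|\na\delta v\|_{L^1_t(L^{d/\e})}$ and weighted norms of $v_1,v_2$. The extra $\e$ of regularity is what puts $\na u\in L^1_{loc}(L^\infty)$ (via $\na^2 u\in L^{8/(8-\e)}_T(L^{d/(1+\e)})\cap L^{8/(8-\e)}_T(L^{2d/(2-\e)})$), so that the flow $X$ is well defined and $A_i-Id$ is small for short time; this is how the constraint $0<\e<\min\{\f1r,1-\f1r,\f dp-1\}$ actually enters, not through an Eulerian product estimate.
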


\begin{rmk}\label{rmk1.0}
The main idea to prove Theorem \ref{thm1.1} is to use the maximal
$L^p(L^q)$ regularizing effect for heat kernel (see Lemma
\ref{lem1}). In fact, similar to the classical Navier-Stokes
equations (\cite{Kato}), we first reformulate \eqref{INS} as
\beq\label{rmk1.0a} u=e^{\mu
t\D}u_0+\int_0^te^{\mu(t-s)\D}\bigl\{-u\cdot\na u+\mu a\D
u-(1+a)\na\Pi\bigr\}\,ds. \eeq Then we can prove appropriate
approximate solutions to \eqref{rmk1.0a} satisfies the uniform
estimate (\ref{thm1a}-\ref{thm1c}). With these estimates, the
existence part of Theorem \ref{thm1.1} follows by a compactness
argument.

We remark that given initial data $u_0\in
\dot{B}^{-1+\f{d}p}_{p,r}(\R^d),$ the maximal regularity we can
expect for $u$ is $\wt{L}^1_t(\dot{B}^{1+\f{d}p}_{p,r}).$ With this
regularity for $u$ and $a\in L^\infty(\R^+\times\R^d),$ we do not
know how to define the product $a\D u$ in the sense of distribution
if $p<d.$ This explains in some sense why we can only prove Theorem
\ref{thm1.1} for $p\in (1,d).$
\end{rmk}

\begin{rmk}\label{rmk1.1}
The smallness condition \eqref{small1} is motivated by the one in
\cite{PZ2} (see also \cite{GZ2, PZ1, Zhangt} for the related works
on 3-D incompressible anisotropic Navier-Stokes system), where we
prove that: for $1<q\leq p<6$ with $\frac1q-\frac1p\leq \frac13,$
given any data $a_0\in \dot{B}^{\f3q}_{q,1}(\R^3)$ and
$u_0=(u_0^h,u_0^3)\in \dot{B}^{-1+\frac3p}_{p,1}(\R^3)$ verifying
\beq \label{1.4} \eta\eqdefa
\bigl(\mu\|a_0\|_{\dot{B}_{q,1}^{\f3q}}+\|u_0^h\|_{\dot{B}^{-1+\frac3p}_{p,1}}\bigr)\exp\Bigl\{
C_0\|u_0^3\|_{\dot{B}^{-1+\frac3p}_{p,1}}^2\ \Big/\mu^2\Bigr\}\leq
c_0\mu, \eeq for some positive constants $c_0$ and $C_0,$
\eqref{INS} has a unique global solution $a\in C([0,\infty);
\dot{B}^{\f3q}_{q,1}(\R^3))$ and $u\in C([0,\infty);
\dot{B}^{-1+\frac3p}_{p,1}(\R^3))\cap
L^1(\R^+;\dot{B}^{1+\frac3p}_{p,1}(\R^3)).$ Similar wellposedness
result (\cite{HPZ2}) holds with $\|a_0\|_{\dot{B}_{q,1}^{\f3q}}$ in
\eqref{1.4} being replaced by
$\|a_0\|_{\cM(\dot{B}_{p,1}^{-1+\f3p})},$ the norm to the multiplier
space of $\dot{B}_{p,1}^{-1+\f3p}(\R^3).$
  We emphasize that
our proof  in \cite{PZ2,HPZ2} uses in a fundamental way the
algebraical structure of \eqref{INS}, namely, $\dive u=0,$ which
will also be one of the key ingredients in the proof of Theorem
\ref{thm1.1} and Theorem \ref{thm1.2} below.
\end{rmk}

\begin{rmk}\label{rmk1.2}
We should also mention the recent interesting result by Danchin and
Mucha \cite{dm2} that with more regularity assumption on the initial
velocity field, namely, $m\leq\r_0<M$ and $u_0\in H^2(\R^d)$ for
$d=2,3,$ they can prove the local wellposedness of \eqref{1.1} for
large data and global wellposedness for small data. We emphasize
that here we work our initial velocity field in the critical space
$\dot{B}^{-1+\f{d}p}_{p,r}(\R^d)$ for $p\in (1,d)$ and $r\in
(1,\infty)$ and also the fact that
  Theorem \ref{thm1.1} remains to be valid in the case of
  bounded smooth domain with Dirichlet boundary conditions for the velocity
  field.  Moreover, our
uniqueness result in Theorem \ref{thm1.1} is strongly inspired by
the Lagrangian approach in \cite{dm2}, but with an almost critical
regularity for the velocity,  the proof here will be much more
complicated.  In fact, the small extra regularity compared to the
scaling  \eqref{1.2}, namely, $u_0\in B^{-1+\frac
dp+\e}_{p,r}(\R^d)$ for some small $\e>0,$ is useful to obtain that
$\Delta u\in L^1_{loc}(L^{d+\eta})$ for some $\eta>0,$ which
combined with $\Delta u\in L^1_{loc}(L^{p_1})$ with $p_1<d$ (see
\eqref{thm1b}) implies that $u\in L^1_{loc}(Lip)$ and this allows us
to reformulate \eqref{INS} in the Lagrangian coordinates. One may
check Theorem \ref{thm1.1bis} One may check Theorem \ref{thm1.1bis}
below for more information about this solution.
\end{rmk}

A different approach to recover the uniqueness of the solution is to impose
 more regularity on the density function. Indeed, if the density is such that $a_0\in   B^{\f{d}q+\e}_{q,\infty}(\R^d)$, for some
 small positive $\ep$, we can also
prove the global wellposedness of \eqref{INS} under the nonlinear
smallness condition \eqref{small2}:

\begin{thm}\label{thm1.2}
{\sl Let $r\in(1,\infty),$ $1<q\leq p<2d$ with $\frac1q-\frac1p\leq
\frac1d$  and $\e\in (0,\f{2d}p-1)$ be any positive real number. Let
$a_0\in L^\infty(\R^d)\cap   B^{\f{d}q+\e}_{q,\infty}(\R^d)$ and $
u_0\in {\dot B}^{-1+\f{d}p-\e}_{p,r}(\R^d)\cap {\dot
B}^{-1+\f{d}p}_{p,r}(\R^d).$ There exist positive constants $c_0,
C_{r,\e}$ so that if \beq \label{small2} \delta \eqdefa
\bigl(\mu\|a_0\|_{L^\infty\cap B^{\f{d}q+\e}_{q,\infty}
}+\|u_0^h\|_{{\dot B}^{-1+\f{d}p-\e}_{p,r}\cap {\dot
B}^{-1+\f{d}p}_{p,r}}\bigr)\exp\Bigl\{C_{r,\e}
\mu^{-2r}\|u_0^d\|_{{\dot B}^{-1+\f{d}p-\e}_{p,r}\cap {\dot
B}^{-1+\f dp}_{p,r}}^{2r}\Bigr\}\leq c_0\mu, \eeq \eqref{INS} has a
global solution $(a,u)$ so that \beno
\begin{split}
& a\in  C([0,\infty); L^\infty(\R^d)\cap
 B^{\f{d}q+\f{\e}2}_{q,\infty}(\R^d)),\\
& u \in C([0,\infty); \dot B^{-1+\f{d}p-\e}_{p,r}(\R^d)\cap \dot
B^{-1+\f{d}p}_{p,r}(\R^d))\cap \wt{L}^1(\R^+; \dot
B^{1+\f{d}p-\e}_{p,r}(\R^d)\cap\dot B^{1+\f{d}p}_{p,r}(\R^d)),
\end{split}
\eeno and there holds \beq
\begin{split}
&\|u^h\|_{\wt{L}^\infty(\R^+;\dot{B}^{-1+\f{d}p-\ep}_{p,r})}+
\|u^h\|_{\wt{L}^\infty(\R^+;\dot{B}^{-1+\f{d}p}_{p,r})}+\mu\bigl(\|a\|_{\wt{L}^\infty(\R^+;
 B^{\f{d}q+\f\e2 }_{q,\infty})}\\
&\qquad+\|u^h\|_{\wt{L}^1(\R^+;\dot{B}^{1+\f{d}p-\e}_{p,r})}+\|u^h\|_{\wt{L}^1(\R^+;\dot{B}^{1+\f{d}p}_{p,r})}\bigr)\leq C\delta,\\
&\|u^d\|_{\wt{L}^\infty(\R^+;\dot{B}^{-1+\f{d}p-\e}_{p,r})}+\|u^d\|_{\wt{L}^\infty(\R^+;\dot{B}^{-1+\f{d}p}_{p,r})}+\mu
\bigl(\|u^d\|_{\wt{L}^1(\R^+;\dot{B}^{1+\f{d}p-\e}_{p,r})}\\
&\qquad+\|u^d\|_{\wt{L}^1(\R^+;\dot{B}^{1+\f{d}p}_{p,r})}\bigr)\leq
2\|u_0^d\|_{\dot{B}^{-1+\f{d}p-\e}_{p,r}\cap
\dot{B}^{-1+\f{d}p}_{p,r}}+c_2\mu,
\end{split}
\label{5.20} \eeq for some $c_2$ sufficiently small, and the norm
$\|\cdot\|_{X\cap Y}=\|\cdot\|_{X}+\|\cdot\|_{Y}.$
 Furthermore, this solution is unique if $\f1q+\f1p\geq\f2d.$ }
\end{thm}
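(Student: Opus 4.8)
The plan is to follow the same overall scheme as in Theorem~\ref{thm1.1}, but to run the estimates directly in the homogeneous Besov framework (for the velocity) together with the nonhomogeneous Besov framework $B^{\f dq+\e}_{q,\infty}$ (for the density), exploiting the extra $\e$ of regularity on $a_0$ to close the product estimate $a\,\D u$ that was the obstruction in Remark~\ref{rmk1.0}. First I would set up the approximate system (by mollifying the data, or via the Friedrichs scheme) and reformulate the velocity equation in Duhamel form as in \eqref{rmk1.0a}. The transport equation $\p_t a+u\cdot\na a=0$ is handled by the standard transport estimates in Besov spaces: since $\na u\in L^1_{loc}(\dot B^{\f dp}_{p,r})\hookrightarrow L^1_{loc}(L^\infty)$ (this is where $p<d$ and the critical regularity of $u$ are used), the flow of $u$ is log-Lipschitz at worst, and one propagates $\|a(t)\|_{L^\infty\cap B^{\f dq+\f\e2}_{q,\infty}}$ with a loss of $\f\e2$ in the regularity index to absorb the commutator, i.e.
\[
\|a\|_{\wt L^\infty_t(B^{\f dq+\f\e2}_{q,\infty})}\le \|a_0\|_{B^{\f dq+\e}_{q,\infty}}\exp\Bigl(C\int_0^t\|\na u\|_{\dot B^{\f dp}_{p,r}}\,ds\Bigr),
\]
and similarly $\|a(t)\|_{L^\infty}=\|a_0\|_{L^\infty}$ since transport preserves $L^\infty$.

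Next I would set up the bootstrap for the velocity. Decompose $u=u^h+u^d$-blocks as in \eqref{small1}; the key structural input, exactly as emphasized in Remark~\ref{rmk1.1}, is $\dive u=0$, which lets one write the horizontal part of the nonlinearity $u\cdot\na u=\sum_j\p_j(u^j u)$ and, more importantly, control the vertical derivative of $u^d$ by horizontal derivatives of $u^h$ through the divergence-free condition, so that the $u^d$ equation is essentially linear in $u^d$ up to terms quadratic in $u^h$ or involving $a$. Running the heat-semigroup estimates in $\dot B^{-1+\f dp-\e}_{p,r}$ and $\dot B^{-1+\f dp}_{p,r}$ simultaneously (using $\|e^{\mu t\D}u_0\|_{\wt L^1_t(\dot B^{s+2}_{p,r})}\lesssim\mu^{-1}\|u_0\|_{\dot B^s_{p,r}}$ and the corresponding $\wt L^\infty_t(\dot B^s_{p,r})$ bound), the nonlinear terms are estimated by product laws in Besov spaces: $\|u\cdot\na u\|_{\dot B^{s}_{p,r}}$ and $\|(1+a)\na\Pi\|_{\dot B^s_{p,r}}$ are bounded using Bony decomposition, while the crucial term $\mu\, a\,\D u$ is estimated by $\mu\|a\|_{B^{\f dq+\f\e2}_{q,\infty}}\|\D u\|_{\dot B^{s-\f\e2}_{\tilde p,r}}$ for suitable $\tilde p$ with $\f1{\tilde p}=\f1p-\f1q+\cdots$ — here the condition $\f1q-\f1p\le\f1d$ guarantees the paraproduct is well defined, and the $\f\e2$-loss on $a$ is exactly compensated by the $\e$-gap between the two velocity spaces. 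The pressure $\na\Pi$ is recovered from $\dive u=0$ by applying $\dive$ to the momentum equation, giving an elliptic equation $\dive((1+a)\na\Pi)=\dive(\cdots)$, solved by a Neumann-series / fixed-point argument in $\dot B^{\f dp-1}_{p,r}$ valid because $\|a\|_{L^\infty}$ (hence $\|a\|_{B^{\f dq}_{q,\infty}}$ up to the small norm) is small by \eqref{small2}.

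Closing the nonlinear estimate then amounts to combining the density bound and the velocity bound into one inequality of the schematic form
\[
Y\le C\Bigl(\|u_0^h\|_{\dot B^{-1+\f dp-\e}_{p,r}\cap\dot B^{-1+\f dp}_{p,r}}+\mu\|a_0\|_{L^\infty\cap B^{\f dq+\e}_{q,\infty}}\Bigr)+C\mu^{-1}Y^2+C\mu^{-1}\|u^d\|_{\cdots}\,Y,
\]
where $Y$ collects all the horizontal norms in \eqref{5.20}, and a separate near-linear inequality for the vertical norms $Z$ with a factor $\exp(C\mu^{-2r}\|u_0^d\|^{2r})$ coming from integrating the transport/energy loss — precisely the exponential appearing in \eqref{small2}. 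A continuity (bootstrap) argument, using that \eqref{small2} makes $\delta$ small compared to $\mu$, then yields the global bounds \eqref{5.20} for the approximate solutions, uniform in the approximation parameter; passing to the limit by the Aubin--Lions compactness lemma (the time-regularity needed comes from the equations themselves) gives the solution with the stated regularity, and continuity in time in the critical spaces follows from the $\wt L^1_t$-control of two derivatives plus the equation. For uniqueness under $\f1q+\f1p\ge\f2d$, I would pass to Lagrangian coordinates as in \cite{dm,dm2}: the condition $\f1q+\f1p\ge\f2d$ is what makes the flow map bi-Lipschitz and the Lagrangian density time-independent, reducing the problem to a quasilinear Stokes system for the Lagrangian velocity, for which a contraction estimate in $\wt L^1_t(\dot B^{1+\f dp}_{p,r})\cap\wt L^\infty_t(\dot B^{-1+\f dp}_{p,r})$ on a short interval (then iterated) closes; here one does not even need the extra $\e$ on the velocity because the regularity of $a$ already gives $\na u\in L^1_{loc}(L^\infty)$.

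The main obstacle I expect is the simultaneous bookkeeping of three scales of function spaces — $L^\infty$ and $B^{\f dq+\f\e2}_{q,\infty}$ for $a$, and $\dot B^{-1+\f dp-\e}_{p,r}$ and $\dot B^{-1+\f dp}_{p,r}$ for $u$ — while keeping every product estimate inside the admissible range dictated by $\f1q-\f1p\le\f1d$ and $\e<\f{2d}p-1$, and in particular verifying that the $\f\e2$ regularity genuinely left on $a$ after one step of the transport estimate is still enough to iterate (it is, because the loss is strictly less than the $\e$-gap), together with the delicate separation of the horizontal system (which is small, $O(\delta)$) from the vertical system (which is only small relative to the exponential weight) that underlies the anisotropic smallness condition \eqref{small2}.
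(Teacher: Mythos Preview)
Your outline has the right large-scale structure (transport estimate with a loss, Bony-paraproduct estimates for $a\Delta u$ and $(1+a)\nabla\Pi$, horizontal/vertical splitting via $\dive u=0$, bootstrap), but there are two genuine gaps that would prevent it from closing.

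\medskip
\textbf{1. The velocity is not Lipschitz.} You write that $\na u\in L^1_{loc}(\dot B^{d/p}_{p,r})\hookrightarrow L^1_{loc}(L^\infty)$. This embedding is false for $r>1$: one only has $\dot B^{d/p}_{p,r}\hookrightarrow \dot B^0_{\infty,r}$, which does not sit in $L^\infty$ unless $r=1$. (Also, Theorem~\ref{thm1.2} allows $p<2d$, not $p<d$.) Consequently the transport equation for $a$ cannot be handled by the standard Lipschitz estimate you invoke. What the paper actually does is to observe that $u\in\wt L^1_t(\dot B^{1+d/p-\e}_{p,r})\cap\wt L^1_t(\dot B^{1+d/p}_{p,r})$ implies $\na u\in \wt L^1_t(B^0_{\infty,r})$, hence $u\in L^1_t(C_\mu)$ with a logarithmic modulus $\mu(\rho)=\rho(1-\log\rho)^{1-\f1r+\e_1}$; one then applies the \emph{losing-derivative} transport estimate (Theorem~3.33 of \cite{BCD}, or \cite{danchin3}) to propagate $a$ in $B^{d/q+\e/2}_{q,\infty}$, the $\e/2$ being precisely the price of log-Lipschitz transport. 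The same remark kills your uniqueness sketch: the Lagrangian change of variables in \cite{dm,dm2} requires $\na u\in L^1_{loc}(L^\infty)$, which is not available here. In the paper, uniqueness for Theorem~\ref{thm1.2} is not redone --- it is inherited from the local well-posedness result of \cite{Haspot} under $\f1q+\f1p\ge\f2d$, and one only has to show $T^\ast=\infty$.

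\medskip
\textbf{2. Gronwall does not apply in Chemin--Lerner spaces.} Your schematic closure
\[
Y\le C(\cdots)+C\mu^{-1}Y^2+C\mu^{-1}\|u^d\|_{\cdots}\,Y
\]
tacitly assumes that the ``large vertical component'' term can be absorbed by a Gronwall/exponential factor. But in the norm $\wt L^1_t(\dot B^s_{p,r})=(\sum_j 2^{jrs}\|\dot\Delta_j u\|_{L^1_t(L^p)}^r)^{1/r}$ the time integral sits \emph{inside} the $\ell^r$-sum, and one cannot pull a factor $\|u^d(t')\|$ out of each block and then Gronwall. The paper's device (Remark~\ref{rmk1.3}, Definition~\ref{defa.3}) is to introduce the weight $f(t)=\|u^d(t)\|_{\dot B^{-1+d/p+1/r}_{p,r}}^{2r}$, set $u^h_\lambda=u^h\exp\{-\lambda\int_0^t f\}$, and prove the interpolation-type product estimate (Lemma~\ref{lem5.2})
\[
\|u^d\na u^h_\lambda\|_{\wt L^1_t(\dot B^{-1+d/p-s}_{p,r})}
\lesssim
\|u^h_\lambda\|_{\wt L^1_t(\dot B^{1+d/p-s}_{p,r})}^{1-\f1{2r}}
\|u^h_\lambda\|_{\wt L^1_{t,f}(\dot B^{-1+d/p-s}_{p,r})}^{\f1{2r}}.
\]
The weighted norm $\|\cdot\|_{\wt L^1_{t,f}}$ then appears on the left-hand side of the dyadic energy inequality with coefficient $\lambda$, so choosing $\lambda\sim\mu^{1-2r}$ absorbs it; \emph{this} is what produces the exponential $\exp\{C_{r,\e}\mu^{-2r}\|u_0^d\|^{2r}\}$ in \eqref{small2}. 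Without this weighted-norm mechanism your bootstrap for $u^h$ does not close.

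\medskip
A smaller point: the product estimate for $a\Delta u$ (Lemma~\ref{lem5.3} in the paper) only needs $a\in L^\infty\cap B^{d/q}_{q,\infty}$ and keeps the same Besov index on $\Delta u$; there is no ``$\e/2$-shift on $\Delta u$ compensated by $\e/2$ on $a$'' as you suggest. The extra $\e$ on $a_0$ is spent entirely in the log-Lipschitz transport step.
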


\begin{rmk}\label{rmk1.3} We point out that
Haspot \cite{Haspot} proved the local well-posedness of \eqref{INS}
under similar conditions of Theorem \ref{thm1.2}. Our novelty here
is the global existence of solutions to \eqref{INS} under the
smallness condition  \eqref{small2}. We should also mention that: to
overcome the difficulty that one can not use Gronwall's inequality
in the framework of Chemin-Lerner spaces, motivated by \cite{PZ1,
PZ2}, we introduced the weighted Chemin-Lerner type Besov norms  in
Definition \ref{defa.3}, which will be one of key ingredients used
in the proof of Theorem \ref{thm1.2}.
\end{rmk}

\begin{rmk}
We remark that in the previous works on the global wellposedness of
\eqref{INS}, the third index, $r,$ of the Besov spaces, to which the
initial data belong,  always equals to $1$. In this case, the
regularizing effect of heat equation allows the velocity field to be
in $L^1(\R^+,Lip(\R^d)),$ which is very useful to solve the
transport equation without losing any derivative of the initial
data. In both Theorem \ref{thm1.1} and Theorem \ref{thm1.2}, the
regularity of the velocity field is general enough to include
non-Lipschitz vector-fields.
\end{rmk}

\no{\bf The organization of the paper.} In the second section, we
present the proof to the existence part of Theorem \ref{thm1.1} in
the case when $1<p\leq\f{dr}{3r-2}$. In Section 3, we shall present
the proof  to the existence part of Theorem \ref{thm1.1}  for the
remaining case: $\f{dr}{3r-2}< p<d$.
 In Section 4, we shall present the proof to the uniqueness part of
Theorem \ref{thm1.1} with an extra regularity on the initial
velocity. In Section 5, we prove Theorem \ref{thm1.2} which gives
the uniqueness in the case where we have an additional regularity on
the density.
 Finally in the appendix, we collect some basic
facts on  Littlewood-Paley theory and Besov spaces, which have been
used throughout this paper.

Let us complete this section by the notations we shall use in this
context:

\no{\bf Notation.} For $X$ a Banach space and $I$ an interval of
$\R,$ we denote by ${C}(I;\,X)$ the set of continuous functions on
$I$ with values in $X,$ and by  $L^q(I;\,X)$ stands for the set of
measurable functions on $I$ with values in $X,$ such that
$t\longmapsto\|f(t)\|_{X}$ belongs to $L^q(I).$  For $a\lesssim b$,
we mean that there is a uniform constant $C,$ which may be different
on different lines, such that $a\leq Cb$. We shall denote by
$(c_{j,r})_{j\in\Z}$ to be a generic element of $\ell^r(\Z)$ so that
$c_{j,r}\geq 0$ and $\sum_{j\in\Z}c_{j,r}^r=1.$

\setcounter{equation}{0}
\section{Proof  to the existence part of Theorem \ref{thm1.1} for
$1<p\leq\f{dr}{3r-2}$}\label{sect2}

 One of the key ingredients used in the proof to the existence part of Theorem \ref{thm1.1} lies in Proposition
 \ref{prop1.1} below:

\begin{prop}[Theorem 2.34 of \cite{BCD}] \label{prop1.1}
{\sl Let $s$ be a negative real number and $(p,r)\in [1,\infty]^2.$
A constant $C$ exists such that \beno
C^{-1}\mu^{\f{s}2}\|f\|_{\dot{B}^{s}_{p,r}}\leq
\bigl\|\|t^{-\f{s}2}e^{t\D}f\|_{L^p}\bigr\|_{L^r(\R^+;\f{dt}{t})}\leq
C\mu^{\f{s}2}\|f\|_{\dot{B}^{s}_{p,r}}. \eeno }
\end{prop}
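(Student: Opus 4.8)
My plan is to reproduce the classical heat--flow characterization of negatively--indexed homogeneous Besov spaces (this is precisely Theorem~2.34 of \cite{BCD}) by running the argument through the Littlewood--Paley decomposition $f=\sum_{j\in\Z}\dot\Delta_j f$. First I would reduce to the case $\mu=1$: the substitution $t\mapsto\mu t$ leaves $\frac{dt}{t}$ invariant and replaces $t^{-s/2}$ by $\mu^{s/2}t^{-s/2}$, so the factor $\mu^{s/2}$ on both ends is produced automatically once the equivalence is known for the bare semigroup $e^{t\Delta}$.

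\textbf{Upper bound.} I would begin from the standard heat--smoothing estimate on a dyadic block, $\|e^{t\Delta}\dot\Delta_j f\|_{L^p}\le C\,e^{-c\,t2^{2j}}\|\dot\Delta_j f\|_{L^p}$, which follows by writing $\dot\Delta_j e^{t\Delta}$ as convolution against a kernel whose $L^1$ norm is $\lesssim e^{-ct2^{2j}}$ (a Bernstein--type computation using that the profile of $\dot\Delta_j$ is supported in an annulus away from the origin; cf.\ \cite{BCD}). Writing $t^{-s/2}e^{-ct2^{2j}}=2^{js}\phi(t2^{2j})$ with $\phi(u):=u^{-s/2}e^{-cu}$, the hypothesis $s<0$ is exactly what guarantees $\phi\in L^r(\R^+;\frac{du}{u})$ — convergence at $u=0$ uses $-s/2>0$, decay at $\infty$ is free. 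Summing the block estimates gives $\|t^{-s/2}e^{t\Delta}f\|_{L^p}\le C\sum_j 2^{js}\phi(t2^{2j})\|\dot\Delta_j f\|_{L^p}$, and after the substitution $t=2^{-2\tau}$ the right--hand side becomes, in the variable $\tau$, a discrete convolution of the fixed profile built from $\phi$ against the sequence $\bigl(2^{js}\|\dot\Delta_j f\|_{L^p}\bigr)_j\in\ell^r$; a one--dimensional Young inequality then yields the bound $C\|f\|_{\dot B^s_{p,r}}$, with the obvious modifications when $p$ or $r$ equals $1$ or $\infty$.

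\textbf{Lower bound.} Here I would set $t_j:=2^{-2j}$ so that $t_j2^{2j}=1$, and write $\dot\Delta_j f=\bigl(\dot\Delta_j e^{-t_j\Delta}\bigr)e^{t_j\Delta}f$. The operator $\dot\Delta_j e^{-t_j\Delta}$ has symbol $\varphi(2^{-j}\xi)e^{|2^{-j}\xi|^2}$, i.e.\ a fixed (after rescaling, $j$--independent) smooth function supported in an annulus, hence is bounded on $L^p$ uniformly in $j$; this gives $2^{js}\|\dot\Delta_j f\|_{L^p}\le C\,t_j^{-s/2}\|e^{t_j\Delta}f\|_{L^p}$. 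For $t\in[t_{j+1},t_j]$ the $L^p$--contractivity of the heat semigroup makes $t\mapsto\|e^{t\Delta}f\|_{L^p}$ nonincreasing, so $\|e^{t\Delta}f\|_{L^p}\ge\|e^{t_j\Delta}f\|_{L^p}$, while $t^{-s/2}\ge t_{j+1}^{-s/2}=4^{s/2}t_j^{-s/2}$ since $-s/2>0$; hence $t^{-s/2}\|e^{t\Delta}f\|_{L^p}\ge 4^{s/2}\,t_j^{-s/2}\|e^{t_j\Delta}f\|_{L^p}$ on that interval. Integrating $\frac{dt}{t}$ over $[t_{j+1},t_j]$ (whose mass is $\ln 2$) and summing over $j\in\Z$ produces $\bigl\|\,t^{-s/2}e^{t\Delta}f\,\bigr\|_{L^r(\R^+;\,dt/t;\,L^p)}\ge c\,\|f\|_{\dot B^s_{p,r}}$, again with the evident change when $r=\infty$.

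The step I expect to be the main obstacle is the interchange in the upper bound: moving the sum over dyadic blocks through the $L^r(\frac{dt}{t};L^p)$ norm without losing $\ell^r$ summability. This is the place where $s<0$ is genuinely indispensable — it is precisely what renders the heat profile $\phi(u)=u^{-s/2}e^{-cu}$ integrable against $\frac{du}{u}$ near the origin — and where one must invoke the honest (discrete) Young convolution inequality rather than a lossy triangle inequality. By contrast the lower bound is essentially soft, resting only on the analyticity and the $L^p$--contractivity of $e^{t\Delta}$.
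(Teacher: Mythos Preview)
Your proposal is correct and follows the standard argument. Note that the paper does not actually give its own proof of this proposition: it is simply quoted as Theorem~2.34 of \cite{BCD} and used as a black box throughout Sections~2--4. Your argument is precisely the one found in that reference --- the upper bound via the dyadic heat--smoothing estimate $\|e^{t\Delta}\dot\Delta_j f\|_{L^p}\lesssim e^{-ct2^{2j}}\|\dot\Delta_j f\|_{L^p}$ together with a Schur/Young convolution bound (your identification of the profile $\phi(u)=u^{-s/2}e^{-cu}\in L^r(\R^+;du/u)$ is exactly where $s<0$ enters), and the lower bound by inverting the heat flow on each annulus and using the $L^p$--contractivity to pass from the discrete times $t_j=2^{-2j}$ to the full half--line. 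Your reduction to $\mu=1$ via the scale--invariant substitution $t\mapsto\mu t$ is also the right way to account for the factors $\mu^{s/2}$ in the statement.
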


 In particular, for $r\geq 1,$ we deduce  from Proposition
 \ref{prop1.1} that $f\in \dot{B}^{-\f2r}_{p, r}(\R^d)$ is equivalent to
$e^{\mu t\D}f\in L^r(\R^+;L^p(\R^d))$.

Notice that given $u_0\in \dot{B}^{-1+\f{d}p}_{p,r}(\R^d)$ with
$1<p\leq \f{dr}{3r-2}$ and $r\in (1,\infty),$ $\tri u_0 \in
\dot{B}^{-3+\f dp}_{p,r}(\R^d),$ we can always find some $q_1\geq p$
and $r_1\geq r$ such that
$$-3+\f dp \geq -3+\f{d}{q_1}=-\f2{r_1}\geq -\f2r.$$ Choosing
$r_1=r$ in the above inequality leads to $q_1=\f{dr}{3r-2}$. Then
$\tri u_0 \in \dot{B}^{-\f2r}_{\f{dr}{3r-2}, r}(\R^d),$ we infer
that $\tri e^{\mu t\D}u_0\in L^r(\R^+; L^{\f{dr}{3r-2}}(\R^d))$.
Similarly, we can choose some $q_2\geq p$ and $r_2\geq r$ with
$-2+\f{d}{q_2}=-\f2{r_2},$ so that  $\na u_0 \in \dot{B}^{-2+\f
dp}_{p,r}(\R^d)\hookrightarrow \dot{B}^{-2+\f{d}{q_2}}_{q_2,
r_2}(\R^d).$ Choosing $r_2=2r$ gives rise to
$q_2=\f{dr}{2r-1}>\f{dr}{3r-2}\geq p$ and $\na e^{\mu t\D}u_0 \in
L^{2r}(R^+;L^{\f{dr}{2r-1}}(\R^d))$. And Sobolev embedding ensures
that $e^{t\D}u_0\in L^{2r}(R^+;L^{\f{dr}{r-1}}(\R^d))$ in this case.

The other key ingredient used in the proof of Theorem \ref{thm1.1}
is the following lemma (see \cite{Lem} for instance), which is
called maximal $L^p(L^q)$ regularity for the heat kernel.

\begin{lem}\label{lem1} (Lemma 7.3 of \cite{Lem})
{\sl The operator $\cA$ defined by $f(t,x)\mapsto\int_0^t \tri
e^{\mu(t-s)\tri}f\,ds$ is bounded from  $L^p((0,T); L^q(R^d))$ to
$L^p((0,T); L^q(R^d))$ for every $T\in (0,\infty]$ and $1 <
p,q<\infty$. Moreover, there holds \beno \|\cA
f\|_{L^p_T(L^q)}\leq\f{C}\mu\|f\|_{L^p_T(L^q)}.\eeno }
\end{lem}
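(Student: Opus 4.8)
The final statement is Lemma \ref{lem1}: the operator $\cA f(t,x) = \int_0^t \tri e^{\mu(t-s)\tri} f\,ds$ is bounded on $L^p((0,T);L^q(\R^d))$ for all $T\in(0,\infty]$ and $1<p,q<\infty$, with $\|\cA f\|_{L^p_T(L^q)}\leq \f{C}\mu\|f\|_{L^p_T(L^q)}$. The plan is to read $\cA$ as a convolution in the time variable with an operator-valued kernel and to apply a vector-valued singular integral (Calder\'on--Zygmund) theorem, following the classical route to maximal parabolic regularity.

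\textbf{Setup and reduction to $\mu=1$.} First I would rescale. Writing $\tilde f(t,x)=f(t/\mu,x)$ and changing variables in the time integral shows that the bound for general $\mu$ follows from the case $\mu=1$ with the stated $1/\mu$ prefactor; so it suffices to treat $\cA_1 f(t)=\int_0^t \tri e^{(t-s)\tri}f(s)\,ds$ and prove boundedness on $L^p((0,T);L^q)$ with a constant independent of $T$. I would regard $\cA_1$ as a singular integral on the line with values in the Banach space $X=L^q(\R^d)$: setting $K(\tau)=\tri e^{\tau\tri}$ for $\tau>0$ and $K(\tau)=0$ for $\tau\le 0$, each $K(\tau)$ is a bounded operator on $X=L^q(\R^d)$ (since the heat semigroup is analytic and $\tri e^{\tau\tri}$ maps $L^q\to L^q$ with norm $\lesssim 1/\tau$), and $\cA_1 f(t)=\int_{\R}K(t-s)f(s)\,ds$ acting on $X$-valued functions. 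Extending $f$ by zero outside $(0,T)$ reduces the $T$-dependent statement to the convolution operator on all of $\R$, which is why the constant is uniform in $T$.

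\textbf{The core: a vector-valued Calder\'on--Zygmund / $H^\infty$-functional-calculus argument.} The main work is to show this convolution is bounded on $L^p(\R;X)$. I would invoke the Weis operator-valued Fourier multiplier theorem (or equivalently the fact that the negative Laplacian has a bounded $H^\infty$-calculus on $L^q(\R^d)$ for $1<q<\infty$, hence maximal $L^p$-regularity): the symbol of $\cA_1$ is $m(\xi)=i\xi(i\xi+\tri)^{-1}$ as an $\mathcal L(X)$-valued function of the time-frequency $\xi\in\R$, and one checks the $R$-boundedness of the families $\{m(\xi)\}$ and $\{\xi m'(\xi)\}$. Since $L^q(\R^d)$ is a UMD space for $1<q<\infty$ and $-\tri$ is sectorial of angle $0$ with bounded imaginary powers, Weis's theorem yields $L^p(\R;X)$-boundedness for all $1<p<\infty$. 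Alternatively, and more in the spirit of the cited reference, one verifies the H\"ormander-type kernel condition $\int_{|\tau|>2|\sigma|}\|K(\tau-\sigma)-K(\tau)\|_{\mathcal L(X)}\,d\tau\le C$ directly from the pointwise bounds $\|K(\tau)\|_{\mathcal L(X)}\lesssim 1/\tau$ and $\|K'(\tau)\|_{\mathcal L(X)}\lesssim 1/\tau^2$ (both immediate from analyticity of the heat semigroup on $L^q$), establishes $L^2(\R;X)$-boundedness via Plancherel and the uniform symbol bound, and then upgrades to all $1<p<\infty$ by the vector-valued Calder\'on--Zygmund theorem on spaces of homogeneous type with UMD target.

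\textbf{Where the difficulty lies.} The genuinely substantive ingredient is the passage from the easy endpoint ($L^2$ in time, obtained from Plancherel and the uniform bound $\sup_\xi\|m(\xi)\|_{\mathcal L(X)}<\infty$) to general $p$: this is exactly where one cannot argue scalarly and must use the geometry of $L^q(\R^d)$ (the UMD property, equivalently boundedness of the Hilbert transform with values in $L^q$) together with either $R$-boundedness of the multiplier family or the H\"ormander kernel estimate. The scalar Calder\'on--Zygmund theory is not enough, since the kernel takes values in operators on $L^q$ rather than in $\R$; the crux is therefore the operator-valued refinement. Since this is precisely Lemma 7.3 of \cite{Lem}, in practice I would present the reduction to $\mu=1$ and the convolution reformulation in detail, state the pointwise kernel bounds, and then cite the vector-valued singular integral / maximal-regularity machinery for the final $L^p$ step, remarking that the explicit $1/\mu$ dependence is forced by the scaling performed at the outset.
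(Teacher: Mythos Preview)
The paper does not give its own proof of this lemma; it is simply quoted as Lemma 7.3 of \cite{Lem}. Your outline via Weis's operator-valued Mihlin theorem (equivalently, maximal $L^p$-regularity coming from the bounded $H^\infty$-calculus of $-\tri$ on the UMD space $L^q(\R^d)$) is a correct and standard route, and the scaling reduction to $\mu=1$ producing the $1/\mu$ prefactor is fine.

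There is, however, one genuine slip in your ``alternative'' approach. You write that one ``establishes $L^2(\R;X)$-boundedness via Plancherel and the uniform symbol bound'' with $X=L^q(\R^d)$. This fails for $q\neq 2$: $L^2(\R;L^q)$ is not a Hilbert space, and uniform boundedness of an operator-valued symbol $\xi\mapsto m(\xi)\in\mathcal L(L^q)$ does \emph{not} imply boundedness of the associated Fourier multiplier on $L^2(\R;L^q)$. This is exactly the reason $R$-boundedness (and not mere norm-boundedness) of $\{m(\xi)\}$ and $\{\xi m'(\xi)\}$ appears in Weis's theorem; the $p=2$ case is no easier than general $p$ when the target is a non-Hilbert UMD space. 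So the Benedek--Calder\'on--Panzone style extrapolation you sketch lacks its anchor point. Either stay with the Weis/$H^\infty$ argument throughout, or supply a genuine proof of the $L^2(\R;L^q)$ case (for the heat kernel one can do this by hand, e.g.\ via a spatial Littlewood--Paley decomposition together with square-function estimates, or by checking that the parabolic multiplier $-|\eta|^2/(i\xi+|\eta|^2)$ satisfies the Mihlin condition on $\R^{1+d}$ and then running a mixed-norm extrapolation). As written, the first route you give is sound; the second is not.
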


\begin{lem}\label{lem2a}
{\sl Let  $1 < r<\infty.$ The operator $\cB$ defined by
$f(t,x)\mapsto \int_0^t \na e^{\mu(t-s)\tri}f\,ds$ is bounded from
$L^{2r}((0,T); L^{\f{dr}{2r-1}}(R^d))$ to $ L^r((0,T);
L^{\f{dr}{3r-2}}(R^d))$ for every $T\in (0,\infty],$ and there holds
\beq\label{2.1wrt} \bigl\| \int_0^t\na
e^{\mu(t-s)\tri}f\,ds\bigr\|_{L^{2r}_T(L^{\f{dr}{2r-1}})}\leq
\f{C}{\mu^{\f{2r-1}{2r}}}\|f\|_{ L^r_T(L^{\f{dr}{3r-2}})}. \eeq
 }
\end{lem}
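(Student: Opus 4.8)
The plan is to reduce \eqref{2.1wrt} to a scalar convolution inequality in the time variable, after extracting the heat-semigroup smoothing pointwise in $t$. Write $q_1=\f{dr}{3r-2}$ and $q_2=\f{dr}{2r-1}$, and record the two facts that drive everything: for $r\in(1,\infty)$ one has $q_1<q_2$, and
\beqo
\f12+\f d2\Bigl(\f1{q_1}-\f1{q_2}\Bigr)=\f12+\f{r-1}{2r}=\f{2r-1}{2r}.
\eeqo
The first step is the classical one-derivative $L^{q_1}$--$L^{q_2}$ bound for the heat kernel: since the convolution kernel of $\na e^{\mu\tau\tri}$ is $(\mu\tau)^{-\f{d+1}2}H\bigl(\cdot/\sqrt{\mu\tau}\bigr)$ for a fixed Schwartz function $H$, Young's inequality in $x$ (legitimate precisely because $q_1\le q_2$) gives, for all $\tau>0$ and $g\in L^{q_1}(\R^d)$,
\beqo
\bigl\|\na e^{\mu\tau\tri}g\bigr\|_{L^{q_2}}\le C(\mu\tau)^{-\f{2r-1}{2r}}\|g\|_{L^{q_1}},
\eeqo
the power of $\tau$ being exactly the quantity computed above.

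Inserting this estimate under the integral sign in the definition of $\cB f$ yields the pointwise-in-$t$ bound
\beqo
\Bigl\|\int_0^t\na e^{\mu(t-s)\tri}f(s)\,ds\Bigr\|_{L^{q_2}}\le\f{C}{\mu^{\f{2r-1}{2r}}}\int_0^t(t-s)^{-\f{2r-1}{2r}}\|f(s)\|_{L^{q_1}}\,ds,
\eeqo
so that it remains only to take the $L^{2r}(0,T)$ norm of the right-hand side. The latter is, up to the constant, the convolution on $\R$ of $h(t):=\|f(t)\|_{L^{q_1}}\car_{[0,T]}(t)\in L^r$ with the kernel $k(t):=t^{-\f{2r-1}{2r}}\car_{\{t>0\}}$, which lies in the Lorentz space $L^{\f{2r}{2r-1},\infty}(\R)$. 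The weak Young (Hardy--Littlewood--Sobolev) inequality $\|k\ast h\|_{L^b}\le C\|k\|_{L^{a,\infty}}\|h\|_{L^c}$, valid for $1<a,c,b<\infty$ with $\f1a+\f1c=1+\f1b$, applied with $a=\f{2r}{2r-1}$ and $c=r$, forces $b=2r$ (since $\f{2r-1}{2r}+\f1r-1=\f1{2r}$) and delivers \eqref{2.1wrt}; equivalently, $\cB$ maps $L^r_T\bigl(L^{q_1}\bigr)$ boundedly into $L^{2r}_T\bigl(L^{q_2}\bigr)$.

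I do not expect a deep obstacle: the genuine point is that the Hardy--Littlewood--Sobolev step is strict and degenerates at $r=1$ (where $c=r=1$), which is exactly why the hypothesis $1<r<\infty$ is imposed, and for such $r$ one checks $a=\f{2r}{2r-1}\in(1,2)$, $c=r\in(1,\infty)$, $b=2r\in(2,\infty)$ with $c<b$, so all the hypotheses of the inequality hold and no endpoint is reached. The real ``content'' of the lemma is therefore the bookkeeping of exponents: it is the requirement that the heat-kernel power $\f{2r-1}{2r}$ and the admissible Hardy--Littlewood--Sobolev exponents be simultaneously consistent that pins down the particular Lebesgue indices $\f{dr}{2r-1}$ and $\f{dr}{3r-2}$.
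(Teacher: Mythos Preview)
Your proof is correct and follows essentially the same route as the paper: a Young-inequality bound in $x$ for the gradient heat kernel giving the $(\mu\tau)^{-\frac{2r-1}{2r}}$ decay, followed by Hardy--Littlewood--Sobolev in the time variable with exactly the exponents you record. Your additional remarks on why $r>1$ is needed and the verification of the HLS indices are welcome details not spelled out in the paper.
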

\begin{proof} Notice that
\beq\label{2.1a}
\begin{split}
 \na
e^{\mu(t-s)\tri}f(s,x)=&\f{\sqrt{\pi}}{(4\pi\mu(t-s))^{\f{d+1}2}}\int_{\R^d}\f{(x-y)}{2\sqrt{\mu(t-s)}}
\exp\Bigl\{-\f{|x-y|^2}{4\mu(t-s)}\Bigr\}f(s,y)\,dy\\
\eqdefa& \f{\sqrt{\pi}}{(4\pi\mu(t-s))^{\f{d+1}2}}
K(\f{\cdot}{\sqrt{4\mu(t-s)}})\ast f(s,x). \end{split}\eeq Applying
Young's inequality in the space variables yields \beno\begin{split}
 \|\na
e^{\mu(t-s)\tri}&( 1_{[0,T]}(s)f)(s,\cdot)\|_{L^{\f{dr}{2r-1}}}\\
\leq&
C(\mu(t-s))^{-\f{d+1}2}\|K(\f{\cdot}{\sqrt{4\pi\mu(t-s)}})\|_{L^{\f{dr}{dr-r
+1}}} \|1_{[0,T]}(s) f(s)\|_{L^{\f{dr}{3r-2}}}\\
\leq& C(\mu(t-s))^{-\f{2r-1}{2r}}\|1_{[0,T]}(s)
f(s)\|_{L^{\f{dr}{3r-2}}},
\end{split} \eeno
where $1_{[0,t]}(s)$ denotes the characteristic function on $[0,t],$
from which and Hardy-Littlewood-Sobolev inequality, we conclude the
proof of \eqref{2.1wrt}.
\end{proof}

In what follows, we shall seek a solution $(a,u)$ of \eqref{1.1} in
the following space: \beq \label{2.4} X\eqdefa\bigl\{ (a,u):\ a\in
L^\infty(\R^+\times\R^d),\ \na u \in L^{2r}(\R^+;
L^{\f{dr}{2r-1}}(\R^d)),\ \tri u \in L^{r}(\R^+;
L^{\f{dr}{3r-2}}(\R^d))\,\bigr\}.\eeq

 We first mollify the initial data $(a_0,u_0),$ and then construct the approximate
 solutions
 $(a_n,u_n)$ via
\begin{equation}\label{schema}
\begin{cases}
\partial_ta_{n}+u_{n}\cdot\nabla a_{n}=0,\\
\partial_t u_{n}+u_{n}\cdot\nabla u_{n}-\mu\Delta u_{n}+\nabla \Pi_{n}=a_{n}(\mu\Delta u_{n}-\nabla \Pi_{n})\\
\text{div\;} u_{n}=0\\
(a_{n}, u_{n})|_{t=0}=(S_{N+n}a_0, S_{N+n}u_0),
\end{cases}
\end{equation}
where $N$ is a large enough positive integer, and $S_{n+N}a_0$
denotes the partial sum of $a_0$ (see the Appendix for its
definition).

We have the following proposition concerning the uniform bounds of
$(a_n,u_n)$.

\begin{prop}\label{uniformbound}
{\sl Under the assumptions of Theorem \ref{thm1.1}, \eqref{schema}
has a unique global smooth solution $(a_n,u_n,\na\Pi_n)$ which
satisfies
 \beq\label{2.2ad}
\begin{split}
& \mu^{\f1r}\|\D
u_{n}^h\|_{L^r(\R^+;L^{\f{dr}{3r-2}})}\mu^{\f1{2r}}\|\na u_{n}^h\|_{L^{2r}(\R^+;L^{\f{dr}{2r-1}})}\leq C\eta,\\
&\mu^{\f1r}\|\D
u_n^{d}\|_{L^r(\R^+;L^{\f{dr}{3r-2}})}+\mu^{\f1{2r}}\|\na
+u_n^{d}\|_{L^{2r}(\R^+;L^{\f{dr}{2r-1}})} \leq
C\|u_{0}^d\|_{\dot{B}^{-1+\f dp}_{p,r}}+ c\mu,\end{split}\eeq and
\beq\label{2.2ap}
\mu^{\f1r}\|\na\Pi_n\|_{L^r(\R^+;L^{\f{dr}{3r-2}})} \leq
C\eta\bigl(\|u_{0}^d\|_{\dot{B}^{-1+\f dp}_{p,r}}+ c\mu\bigr) \eeq
for some small enough constant $c$ and $\eta$ given by
\eqref{small1}. }
\end{prop}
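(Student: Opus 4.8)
\textbf{Proof plan for Proposition \ref{uniformbound}.}

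The plan is to establish the a priori estimates \eqref{2.2ad}--\eqref{2.2ap} for the regularized system \eqref{schema} by a Duhamel/fixed-point bootstrap in the space $X$ of \eqref{2.4}, separating the horizontal and vertical components of the velocity exactly as dictated by the smallness condition \eqref{small1}. Global existence and smoothness of $(a_n,u_n,\na\Pi_n)$ themselves are routine: since the data $(S_{N+n}a_0,S_{N+n}u_0)$ are smooth and spectrally localized, the $a_n$-equation is a linear transport equation with a smooth divergence-free field, so $\|a_n(t)\|_{L^\infty}=\|S_{N+n}a_0\|_{L^\infty}\le\|a_0\|_{L^\infty}$ for all $t$, and the $u_n$-equation is then a perturbed heat equation with smooth coefficient $a_n$; local existence follows from a standard contraction, and the estimates below upgrade it to a global one. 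So the real content is the uniform bound.

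First I would apply $e^{\mu t\D}$-Duhamel to the $u_n$-equation, writing $u_n=e^{\mu t\D}S_{N+n}u_0+\int_0^t e^{\mu(t-s)\D}\bigl(-u_n\cdot\na u_n+\mu a_n\D u_n-(1+a_n)\na\Pi_n\bigr)\,ds$, and close the estimates in the norm $\mathcal N(u)\eqdefa \mu^{1/r}\|\D u\|_{L^r_t(L^{\frac{dr}{3r-2}})}+\mu^{1/(2r)}\|\na u\|_{L^{2r}_t(L^{\frac{dr}{2r-1}})}$. For the linear part, Proposition \ref{prop1.1} together with the embedding chain worked out just before Lemma \ref{lem1} gives $\mu^{1/r}\|\D e^{\mu t\D}u_0\|_{L^r_t(L^{\frac{dr}{3r-2}})}+\mu^{1/(2r)}\|\na e^{\mu t\D}u_0\|_{L^{2r}_t(L^{\frac{dr}{2r-1}})}\lesssim\|u_0\|_{\dot B^{-1+d/p}_{p,r}}$ (and the same for $S_{N+n}u_0$, uniformly in $n$). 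For the Duhamel part I apply $\D\int_0^t e^{\mu(t-s)\D}\,ds=\mathcal A$ (Lemma \ref{lem1}, gain $1/\mu$) to control the $L^r_t(L^{dr/(3r-2)})$ norm, and $\na\int_0^t e^{\mu(t-s)\D}\,ds=\mathcal B$ (Lemma \ref{lem2a}, mapping $L^{2r}_t(L^{dr/(2r-1)})\to L^r_t(L^{dr/(3r-2)})$ with gain $\mu^{-(2r-1)/(2r)}$, and by the same kernel computation also $L^r_t(L^{dr/(3r-2)})\to L^{2r}_t(L^{dr/(2r-1)})$) for the $L^{2r}_t(L^{dr/(2r-1)})$ norm. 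The nonlinear source terms are then estimated by Hölder in space: writing $\frac{1}{dr/(3r-2)}$ as a sum of two copies of $\frac{1}{dr/(2r-1)}$ minus $\frac{1}{d}$ and using $\na u_n\in L^{2r}_t(L^{dr/(2r-1)})$, $u_n\in L^{2r}_t(L^{dr/(r-1)})$ (Sobolev), one gets $\|u_n\cdot\na u_n\|_{L^r_t(L^{dr/(3r-2)})}\lesssim \mu^{-1/r}\mathcal N(u_n)^2$; the term $\mu a_n\D u_n$ is bounded by $\mu\|a_n\|_{L^\infty_{t,x}}\|\D u_n\|_{L^r_t(L^{dr/(3r-2)})}\le\mu\|a_0\|_{L^\infty}\cdot\mu^{-1/r}\mathcal N(u_n)$, which is absorbed into the left side thanks to the smallness $\mu\|a_0\|_{L^\infty}\le c_0\mu$.

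The pressure is handled by taking $\dive$ of the momentum equation: $\Delta\Pi_n=\dive\bigl(-u_n\cdot\na u_n+\mu a_n\D u_n-a_n\na\Pi_n\bigr)$, so $\na\Pi_n=-\na(-\Delta)^{-1}\dive(\cdots)$ and the Calderón-Zygmund bound on $L^{dr/(3r-2)}$ gives $\mu^{1/r}\|\na\Pi_n\|_{L^r_t(L^{dr/(3r-2)})}\lesssim \mu^{1/r}\bigl(\|u_n\cdot\na u_n\|+\mu\|a_n\D u_n\|+\|a_n\na\Pi_n\|\bigr)_{L^r_t(L^{dr/(3r-2)})}$, the last term again absorbed by smallness of $\|a_0\|_{L^\infty}$; this yields \eqref{2.2ap} once $\mathcal N(u_n)$ is controlled. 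The decisive point is the \emph{splitting} $u_n=u_n^h+u_n^d$: because $\dive u_n=0$, the quadratic term $u_n\cdot\na u_n^h=u_n^h\cdot\na_h u_n^h+u_n^d\,\partial_d u_n^h$ contains at least one horizontal factor, so it is $\lesssim \mu^{-1/r}\mathcal N(u_n^h)\,\mathcal N(u_n)$; feeding the vertical bound $\mathcal N(u_n^d)\le C\|u_0^d\|_{\dot B^{-1+d/p}_{p,r}}+c\mu$ (itself closed by a standard heat-flow bootstrap, absorbing $\mathcal N(u_n^d)^2$ when $\|u_0^d\|$ is small — this is where the exponential weight $\exp\{C_r\mu^{-2r}\|u_0^d\|^{2r}\}$ enters via a Gronwall-type argument on the nonlinear term proportional to $\|u_0^d\|^2$) into the $u_n^h$-estimate gives a closed inequality $\mathcal N(u_n^h)\le C\eta+C\mu^{-1}\bigl(\|u_0^d\|+c\mu+\mathcal N(u_n^h)\bigr)\mathcal N(u_n^h)$, which bootstraps to $\mathcal N(u_n^h)\le C\eta$ under \eqref{small1}. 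The main obstacle is organizing this coupled bootstrap so that all absorbed terms are genuinely small: one must track carefully how the vertical estimate $\mathcal N(u_n^d)\lesssim\|u_0^d\|+\mu$ is obtained (it is itself not small, only bounded), and ensure that every place $u_n^d$ appears in the $u_n^h$-equation it is accompanied by a factor $\eta/\mu$ or by $\mathcal N(u_n^h)$, never producing an $O(1)$ coefficient in front of $\mathcal N(u_n^h)$ — this is exactly the structural role of $\dive u=0$ emphasized in Remark \ref{rmk1.1}, and getting the exponential dependence on $\|u_0^d\|$ right in the vertical estimate is the technically delicate step.
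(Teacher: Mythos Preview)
Your architecture is right --- Duhamel, maximal regularity via Lemmas \ref{lem1} and \ref{lem2a}, the pressure via $\dive$, the $h/d$ split, and the structural use of $\dive u=0$ --- and it matches the paper. But there is a genuine gap at the one step you yourself flag as delicate, and your description of it is not quite correct.

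The problem is the term $u_n^d\,\partial_d u_n^h$ in the $u_n^h$ equation. Your ``closed inequality'' reads
\[
\mathcal N(u_n^h)\le C\eta+C\mu^{-1}\bigl(\|u_0^d\|_{\dot B^{-1+d/p}_{p,r}}+c\mu+\mathcal N(u_n^h)\bigr)\mathcal N(u_n^h),
\]
but the coefficient $C\mu^{-1}\|u_0^d\|_{\dot B^{-1+d/p}_{p,r}}$ in front of $\mathcal N(u_n^h)$ is \emph{not} small under \eqref{small1}: $u_0^d$ may be arbitrarily large. So this inequality does not bootstrap. You also say the exponential weight arises from a Gronwall argument in the \emph{vertical} estimate ``when $\|u_0^d\|$ is small'', but $\|u_0^d\|$ is not assumed small, and the $u^d$ equation closes cleanly without any exponential because (by $\partial_d u^d=-\dive_h u^h$) every quadratic term there already carries a small $u^h$ factor.

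The paper's device to close the $u_n^h$ estimate is to introduce the exponentially weighted unknown
\[
u_{\la,n}(t)=u_n(t)\exp\Bigl\{-\int_0^t\bigl(\la_1\|\na u_n^d(s)\|_{L^{dr/(2r-1)}}^{2r}+\la_2\|\D u_n^d(s)\|_{L^{dr/(3r-2)}}^{r}\bigr)\,ds\Bigr\},
\]
write Duhamel for $u_{\la,n}^h$, and observe that the dangerous term becomes
\[
\Bigl(\int_0^t e^{-2\la_1 r\int_s^t\|\na u_n^d\|^{2r}}\|u_n^d(s)\|_{L^{dr/(r-1)}}^r\|\na u_{\la,n}^h(s)\|_{L^{dr/(2r-1)}}^r\,ds\Bigr)^{1/r}\le\frac{1}{(2\la_1 r)^{1/(2r)}}\|\na u_{\la,n}^h\|_{L^{2r}_t(L^{dr/(2r-1)})},
\]
which \emph{is} absorbable once $\la_1\sim\mu^{1-2r}$. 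Undoing the weight at the end is exactly what produces the factor $\exp\bigl(C_r\mu^{-2r}\|u_0^d\|^{2r}\bigr)$ on the right of the $u^h$ bound; combined with the bootstrap hypothesis on $\mathcal N(u_n^h)$ (the $\bar T_n$ argument) this yields \eqref{2.2ad}. A bare Gronwall on time-integrated $L^r_t(L^q)$ norms does not do this; the weight is the missing idea.
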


\begin{proof} For $N$ large enough, it is easy to prove that
\eqref{schema} has a unique local smooth solution $(a_n, u_n,
\na\Pi_n)$ on $[0,T_n^\ast)$ for some positive time $T_n^\ast.$
Without loss of generality, we may assume that $T_n^\ast$ is the
lifespan to this solution. It is easy to observe that \beq
\label{2.2aq} \|a_n\|_{L^\infty((0,T_n^\ast)\times\R^d)}\leq
\|a_0\|_{L^\infty}. \eeq

Next, for $\la_1, \la_2>0,$ we denote \beq
\label{2.3al}\begin{split} &f_{1,n}(t)\eqdefa \|\na
u^d_n(t)\|_{L^{\f{dr}{2r-1}}}^{2r},\quad f_{2,n}(t)\eqdefa
\|\D u_n^{d}(t)\|_{L^{\f{dr}{3r-2}}}^r,\\
&u_{\la,n}(t,x)\eqdefa u_n(t,x)\exp\Bigl\{-
\int_0^t\bigl(\la_1f_{1,n}(t')+\la_2f_{2,n}(t')\bigr)\,dt'\Bigr\},
\end{split} \eeq and similar notation for
$\Pi_{\la,n}(t,x).$ To deal with the pressure function $\Pi_n$ in
\eqref{schema}, we get, by taking divergence to the momentum
equation of \eqref{schema}, that \beno -\tri
\Pi_{\la,n}=\dive(a_{n}\na \Pi_{\la,n})-\mu \dive(a_{n}\tri
u_{\la,n})+\dive(u_{n}\cdot\na u_{\la,n}), \eeno from which,
\eqref{2.2aq},  and  the fact $\dive u_{n}=0$, so that \beno
\begin{split}
\dive(u_{n}\cdot\na u_{\la,n})=&\dive_h
(u_{n}^h\cdot\na_hu_{\la,n}^h)+\dive_h
(u_n^{d}\pa_du_{\la,n}^h)\\
&+\pa_d (u_{\la,n}^h\cdot\na_hu_n^{d}) -\pa_d
(u_{n}^h\dive_hu_{\la,n}^h),\end{split} \eeno we deduce that
\beq\label{2.3a}
\begin{split}
\|\na \Pi_{\la,n}(t)\|_{L^{\f{dr}{3r-2}}}\leq
&C\Bigl\{\|a_{0}\|_{L^\infty}\|\na
\Pi_{\la,n}(t)\|_{L^{\f{dr}{3r-2}}} +\mu\|a_{0}\|_{L^\infty}\|\tri
u_{\la,n}\|_{L^{\f{dr}{3r-2}}}\\
&+\bigl(\|u_{n}^h\|_{L^{\f{dr}{r-1}}}+\|u_n^{d}\|_{L^{\f{dr}{r-1}}}\bigr)\|\na
u_{\la,n}^h\|_{L^{\f{dr}{2r-1}}}\\
&+\|u_{\la,n}^h\|_{L^{\f{dr}{r-1}}}\|\na
u_n^{d}\|_{L^{\f{dr}{2r-1}}}\Bigr\}. \end{split} \eeq
 In
particular, if $\eta$ in \eqref{small1} is so small that
$C\|a_0\|_{L^\infty}\leq \f12$, we infer from \eqref{2.3a} that
\beq\label{pest}
\begin{split}
\|\na \Pi_{\la,n}(t)\|_{L^{\f{dr}{3r-2}}}\leq &
C\Bigl\{\mu\|a_0\|_{L^\infty}\|\tri
u_{\la,n}\|_{L^{\f{dr}{3r-2}}}+\|u_{n}^h\|_{L^{\f{dr}{r-1}}}\|\na
u_{\la,n}^h\|_{L^{\f{dr}{2r-1}}}\\
&+\|u_n^{d}\|_{L^{\f{dr}{r-1}}}\|\na
u_{\la,n}^h\|_{L^{\f{dr}{2r-1}}}+\|u_{\la,n}^h\|_{L^{\f{dr}{r-1}}}\|\na
u_n^{d}\|_{L^{\f{dr}{2r-1}}}\Bigr\}.
\end{split}
\eeq Notice on the other hand that we can also equivalently
reformulate the momentum equation of \eqref{schema} as
\beq\label{pesta} u_{n}^i= u_{n,L}^i+ \int_0^t
e^{\mu(t-s)\Delta}\bigl\{-u_{n}\nabla u_{n}^i+\mu a_{n}\Delta
u_{n}^i-(1+a_{n})\pa_i \Pi_{n}\bigr\}\,ds\quad\mbox{for}\quad
i=1,2,3,\eeq and $u_{n,L}\eqdefa e^{\mu t\D}S_{n+N}u_{0},$ from
which and \eqref{2.3al}, we write \beq \label{2.6}
\begin{split}
u_{\la,n}^h=& u_{n,L}^h\exp\Bigl\{-
\int_0^t(\la_1f_{1,n}(t')+\la_2f_{2,n}(t'))\,dt'\Bigr\}\\
&+ \int_0^t e^{\mu(t-s)\Delta}\exp\Bigl\{-
\int_s^t(\la_1f_{1,n}(t')+\la_2f_{2,n}(t'))\,dt'\Bigr\}\\
&\qquad\times\bigl\{-u_{n}\nabla u_{\la,n}^h+\mu a_{n}\Delta
u_{\la,n}^h-(1+a_{n})\na_h \Pi_{\la,n}\bigr\}\,ds.
\end{split}\eeq Applying Lemma \ref{lem1}, Lemma \ref{lem2a}, \eqref{2.2aq},  and \eqref{2.3a} leads to
\beq\label{2.6a}
\begin{split} \mu^{1-\f1{2r}}&\|\na u_{\la,n}^h\|_{L^{2r}_t(L^{\f{dr}{2r-1}})}+\mu\|\D u_{\la,n}^h\|_{L^r_t(L^{\f{dr}{3r-2}})}\\
\leq &\mu^{1-\f1{2r}}\|\na u_{n,L}^h\|_{L^{2r}_t(L^{\f{dr}{2r-1}})}+
\mu\|\D u_{L}^h\|_{L^r_t(L^{\f{dr}{3r-2}})}+C\Bigl\{
\mu\|a_0\|_{L^\infty}\|\D
u_{\la,n}^h\|_{L^r_t(L^{\f{dr}{3r-2}})}\\
&+\mu\|a_0\|_{L^\infty}\|\D
u_{\la,n}^{d}\|_{L^r_t(L^{\f{dr}{3r-2}})}+\|u_{n}^h\|_{L^{2r}_t(L^{\f{dr}{r-1}})}\|\na
u_{\la,n}^h\|_{L^{2r}_t(L^{\f{dr}{2r-1}})}\\
&+\Bigl(\int_{0}^te^{-
r\int_s^t(\la_1f_{1,n}(t')+\la_2f_{2,n}(t'))\,dt'}\bigl(\|u_n^{d}(s)\|_{L^{\f{dr}{r-1}}}^r\|\na
u_{\la,n}^h(s)\|_{L^{\f{dr}{2r-1}}}^r\\
&\qquad+\|u_{\la,n}^h\|_{L^{\f{dr}{r-1}}}^r\|\na
u_n^{d}(s)\|_{L^{\f{dr}{2r-1}}}^r\bigr)\,ds\Bigr)^{\f1r}\Bigr\}
\quad\mbox{for}\quad t\leq T_n^\ast.
\end{split}\eeq  Then as
$C\|a_0\|_{L^\infty}\leq \f12,$ and \beno
\begin{split} &\|u_{n}\|_{L^{2r}_t(L^{\f{dr}{r-1}})}\leq
C\|\na u_{n}\|_{L^{2r}_t(L^{\f{dr}{2r-1}})},\quad \|\D
u_{\la,n}^d\|_{L^r_t(L^{\f{dr}{3r-2}})}\leq\f1{(\la_2 r)^{\f1r}},\\
&\Bigl(\int_{0}^te^{-\la_1
r\int_s^tf_{1,n}(t')\,dt'}\|u_{n-1}^d(s)\|_{L^{\f{dr}{r-1}}}^r\|\na
u_{\la,n}^h(s)\|_{L^{\f{dr}{2r-1}}}^r\,ds\Bigr)^{\f1r}\leq
\f1{(\la_1 r)^{\f1{2r}}}\|\na
u_{\la,n}^h(s)\|_{L^{2r}_t(L^{\f{dr}{2r-1}})},
\end{split}
\eeno so that we infer from \eqref{2.6a} that \beno \begin{split}
\mu^{1-\f1{2r}}&\|\na
u_{\la,n}^h\|_{L^{2r}_t(L^{\f{dr}{2r-1}})}+\mu\|\D
u_{\la,n}^h\|_{L^r_t(L^{\f{dr}{3r-2}})}\\
\leq & C
\mu^{1-\f1r}\|u_0^h\|_{\dot{B}^{-1+\f{d}p}_{p,r}}+C\Bigl\{\f{\mu}{(\la_2
r)^{\f1r}}\|a_0\|_{L^\infty}+\|u_{n}^h\|_{L^{2r}_t(L^{\f{dr}{r-1}})}\|\na
u_{\la,n}^h\|_{L^{2r}_t(L^{\f{dr}{2r-1}})}\\
&+\f1{(\la_1 r)^{\f1{2r}}}\|\na
u_{\la,n}^h(s)\|_{L^{2r}_t(L^{\f{dr}{2r-1}})}\Bigr\}.
\end{split}\eeno
Taking $\la_1=\f{(4C)^{2r}}{\mu^{2r-1}r}$ and
$\la_2=\f{C^r}{r\mu^{r-1}}$ in the above inequality, we obtain
\beq\label{2.7} \begin{split}\f34&\mu^{1-\f1{2r}}\|\na
u_{\la,n}^h\|_{L^{2r}_t(L^{\f{dr}{2r-1}})}+ \mu\|\D
u_{\la,n}^h\|_{L^r_t(L^{\f{dr}{3r-2}})}\\
\leq & C
\mu^{1-\f1r}\|u_0^h\|_{\dot{B}^{-1+\f{d}p}_{p,r}}+\mu^{2-\f1r}\|a_0\|_{L^\infty}+\|u_{n}^h\|_{L^{2r}_t(L^{\f{dr}{r-1}})}\|\na
u_{\la,n}^h\|_{L^{2r}_t(L^{\f{dr}{2r-1}})}.
\end{split}\eeq
Let  $c_1$ be a small enough positive constant, which will be
determined later on, we denote \beq \label{2.8} \bar{T}_n\eqdefa
\sup\bigl\{\ T\leq T_n^\ast;\ \mu^{\f1{2r}}\|\na
u_{n}^h\|_{L^{2r}_T(L^{\f{dr}{r-1}})}+ \mu^{\f1r}\|\D
u_{n}^h\|_{L^r_T(L^{\f{dr}{3r-2}})}\leq c_1\mu\ \bigr\}. \eeq Then
it follows from \eqref{2.3al} and \eqref{2.7} that for
$t\leq\bar{T}_n$ \beq\label{2.9}
\begin{split} &\f12\mu^{\f1{2r}}\|\na
u_{n}^h\|_{L^{2r}_t(L^{\f{dr}{2r-1}})}+\mu^{\f1r}\|\D
u_{n}^h\|_{L^r_t(L^{\f{dr}{3r-2}})}\\
&\leq C
\bigl(\|u_0^h\|_{\dot{B}^{-1+\f{d}p}_{p,r}}+\mu\|a_0\|_{L^\infty}\bigr)\\
&\qquad\times\exp\Bigl\{-C_r\int_0^t\bigl( \mu^{1-2r}\|\na
u^d_n(s)\|_{L^{\f{dr}{2r-1}}}^{2r}+\mu^{1-r}\|\D
u^d_n(s)\|_{L^{\f{dr}{3r-2}}}^r\bigr)\,ds\Bigr\}.
\end{split}
\eeq

On the other hand, it follows from a similar derivation of
\eqref{2.6a} that \beno
\begin{split}\mu^{1-\f1{2r}}\|\na& u_n^{d}\|_{L^{2r}_t(L^{\f{dr}{2r-1}})}+ \mu\|\D u_n^{d}\|_{L^r_t(L^{\f{dr}{3r-2}})}\\
\leq & \mu^{1-\f1{2r}}\|\na
u_{n,L}^d\|_{L^{2r}_t(L^{\f{dr}{2r-1}})}+ \mu\|\D
u_{n,L}^d\|_{L^r(L^{\f{dr}{3r-2}})}+C\Bigl\{
\mu\|a_0\|_{L^\infty}\|\D u_{n}\|_{L^r_t(L^{\f{dr}{3r-2}})}\\
&+\bigl(\|u_{n}^h\|_{L^{2r}_t(L^{\f{dr}{r-1}})}+\|u_n^{d}\|_{L^{2r}_t(L^{\f{dr}{r-1}})}\bigr)\|\na
u_{n}^h\|_{L^{2r}_t(L^{\f{dr}{2r-1}})}\\
&+\|u_{n}^h\|_{L^{2r}_t(L^{\f{dr}{r-1}})}\|\na
u_n^{d}\|_{L^{2r}_t(L^{\f{dr}{2r-1}})}\Bigr\}\quad\mbox{for}\quad
t\leq T_n^\ast,
\end{split}\eeno
from which and \eqref{2.8}, we deduce that \beq\label{2.10}
\begin{split}
\mu^{\f1r}\|\D u_n^{d}&\|_{L^r_t(L^{\f{dr}{3r-2}})}+\mu^{\f1{2r}}\|\na u_n^{d}\|_{L^{2r}_t(L^{\f{dr}{2r-1}})}\\
\leq & 2C\|u_0^d\|_{\dot{B}^{-1+\f
dp}_{p,r}}+2Cc_1\mu(1+\|a_0\|_{L^\infty})\quad\mbox{for}\quad t\leq
\bar{T}_n.
\end{split}\eeq
Substituting \eqref{2.10} into \eqref{2.9} leads to \beq\label{2.11}
\begin{split} &\mu^{\f1r}\|\D
u_{n}^h\|_{L^r_t(L^{\f{dr}{3r-2}})}+\f12\mu^{\f1{2r}}\|\na
u_{n}^h\|_{L^{2r}_t(L^{\f{dr}{2r-1}})}\\
&\leq
C\bigl(\|u_0^h\|_{\dot{B}^{-1+\f{d}p}_{p,r}}+\mu\|a_0\|_{L^\infty}\bigr)\exp\Bigl\{C_r
\mu^{-2r}\|u_0^d\|_{\dot{B}^{-1+\f
dp}_{p,r}}^{2r}\Bigr\}\leq\f{c_1}2\mu\quad\mbox{for}\quad
t\leq\bar{T}_n,
\end{split}
\eeq as long as $C_r$ is sufficiently large and $c_0$ is small
enough in \eqref{small1}. This shows that $\bar{T}_n=T_n^\ast.$ Then
thanks to \eqref{2.10}, \eqref{2.11} and Theorem 1.3 in \cite{Kim},
we conclude that $T_n^\ast=\infty$ and there holds \eqref{2.2ad}. By
virtue of \eqref{2.2ad} and \eqref{2.3a}, we infer \beno
\begin{split}
\|\na\Pi_{n}\|_{L^r_t(L^{\f{dr}{3r-2}})}\leq &C\Bigl\{
\mu\|a_0\|_{L^\infty}\|\D
u_{n}\|_{L^r_t(L^{\f{dr}{3r-2}})}+\bigl(\|\na
u_{n}^h\|_{L^{2r}_t(L^{\f{dr}{2r-1}})}\\
&\qquad+\|\na u_n^{d}\|_{L^{2r}_t(L^{\f{dr}{2r-1}})}\bigr)\|\na
u_{n}^h\|_{L^{2r}_t(L^{\f{dr}{2r-1}})}\Bigr\}\\
\leq
&C\mu^{-\f1r}\eta\bigl(\|u_0^d\|_{\dot{B}^{-1+\f{d}p}_{p,r}}+\eta+c\mu\bigr),
\end{split}
\eeno for some small constant $c,$ which leads to \eqref{2.2ap}.
This completes the proof of the proposition.
\end{proof}

We now present the proof to the existence part of Theorem \ref{thm1.1} in the case when $1<p\leq \f{dr}{3r-2}.$\\

\no{\bf Proof to the existence part of Theorem \ref{thm1.1} for
$1<p\leq \f{dr}{3r-2}.$ }\ Indeed thanks to \eqref{schema} and
\eqref{2.2ad},  \eqref{2.2ap}, we infer that $\{\p_t u_n\}$ is
uniformly bounded in $L^r(\R^+;L^{\f{dr}{3r-2}}(\R^d)),$ from which,
\eqref{2.2ad}, \eqref{2.2ap},  and Ascoli-Arzela Theorem, we
conclude that there exists a subsequence of $\{a_n,u_n,\na\Pi_n\},$
which we still denote by $\{a_n,u_n, \na\Pi_n\}$ and some $(a,u, \na
\Pi)$ with $a\in L^\infty(\R^+\times\R^d),$ $\na u\in L^{2r}(\R^+;
L^{\f{dr}{2r-1}}(\R^d))$ and $\D u, \na\Pi\in
L^r(\R^+;L^{\f{dr}{3r-2}}(\R^d)),$ such that \beq \label{2.12}
\begin{split}
&a_n \rightharpoonup  a\quad \mbox{weak}\ \ast \ \mbox{in}\
L^\infty_{loc}(\R^+\times\R^d),\\
&  u_n \rightarrow u \quad \mbox{strongly}\  \ \mbox{in}\
L^{2r}_{loc}(\R^+; L^{\f{dr}{r-1}}_{loc}(\R^d)),\\
&  \na u_n \rightarrow \na u \quad \mbox{strongly}\  \ \mbox{in}\
L^{2r}_{loc}(\R^+; L^{\f{dr}{2r-1}}_{loc}(\R^d)),\\
& \D u_n \rightharpoonup \D u\quad\mbox{and}\quad\na\Pi_n
\rightharpoonup \na\Pi \ \ \mbox{weakly}\quad \mbox{in}\ L^r(\R^+;
L^{\f{dr}{3r-2}}(\R^d)).
\end{split}
\eeq Obviously, $(a_n,u_n, \na\Pi_n)$ satisfies \beq\label{2.16}
\begin{split}
&\int_0^\infty\int_{\R^d}a_n(\p_t\phi+u_n\cdot\na\phi)\,dx\,dt+\int_{\R^d}S_{n+N}a_0(x)\phi(0,x)\,dx=0,\\
&\qquad\qquad
\int_0^\infty\int_{\R^d}\dive u_n\phi\,dx\,dt=0\quad\mbox{and}\\
& \int_0^\infty\int_{\R^d}\Bigl\{u_n\cdot\p_t\Phi-(u_n\cdot\na
u_n)\cdot\Phi +(1+a_n)\bigl(\mu\D
u_n-\na\Pi_n\bigr)\cdot\Phi\Bigr\}\,dx\,dt\\
&\qquad\quad+\int_{\R^d}S_{n+N}u_0\cdot\Phi(0,x)\,dx=0,
\end{split}
\eeq for all the test function $\phi, \Phi$ given by Definition
\ref{defi1.1}.

Therefore thanks to \eqref{2.12}, to prove that $(a,u,\na\Pi)$
obtained in \eqref{2.12} is indeed a global weak solution of
\eqref{INS} in the sense of  Definition \ref{defi1.1}, we only need
to show that $\f1{1+a_n}\to \f1{1+a}$ almost everywhere in
$\R^+\times\R^d$ as $n\to \infty.$ Toward this, we shall follow the
compactness argument in  \cite{LP} to prove that $\{a_n\}$ strongly
converges to $a$ in $L^m_{loc}(\R^+\times\R^d)$ for any $m<\infty.$
In fact, it is easy to observe from the transport equation of
\eqref{schema} that \beno \p_t a_n^2+\dive (u_n a_n^2)=0, \eeno from
which, we deduce that \beq \p_t \overline{a^2}+\dive
(u\overline{a^2})=0, \label{2.13} \eeq where we denote
$\overline{a^2}$ to be the weak $\ast$ limit of $\{a_n^2\}.$

While thanks to \eqref{2.12} and \eqref{2.16}, there holds \beno
\p_ta+\dive(u a)=0 \eeno in the sense of distributions. Moreover, as
$\na u\in L^r(\R^+; L^{\f{dr}{2r-1}}(\R^d)),$ we infer by a
mollifying argument as that in \cite{DP} that \beq\label{2.14}
\p_ta^2+\dive(ua^2)=0. \eeq Subtracting \eqref{2.14} from
\eqref{2.13}, we obtain \beq\label{2.17} \p_t
(\overline{a^2}-a^2)+\dive (u(\overline{a^2}-a^2))=0. \eeq Notice
that $\{S_{n+N}a_0\}$ converges to $a_0$ almost everywhere in
$\R^d,$ which implies $(\overline{a^2}-a^2)(0,x)=0$ for a. e.
$x\in\R^d.$ Whence it follows from the uniqueness theorem for the
 transport equation in \cite{DP} that \beno
(\overline{a^2}-a^2)(t,x)=0\quad\mbox{for}\quad a. \ e.\ \
(t,x)\in\R^+\times\R^d, \eeno which together with
$\|a_n\|_{L^\infty}\leq\|a_0\|_{L^\infty}$  implies that
\beq\label{2.15} a_n\ \rightarrow\ a\quad \mbox{strongly in}\quad
L^m_{loc}(\R^+\times\R^d)\quad\mbox{for any}\ m<\infty. \eeq Thanks
to \eqref{2.12} and \eqref{2.15}, we can take $n\to\infty$ in
\eqref{2.16}  to verify that $(a,u, \na\Pi)$ obtained  in
\eqref{2.12} satisfies \eqref{def1.1a} and \eqref{def1.1b}.
Moreover, thanks to \eqref{2.2ad} and \eqref{2.2ap}, there holds
\eqref{thm1a}. This completes the proof to the existence part of
Theorem \ref{thm1.1} for $p\in (1, \f{dr}{3r-2}].$\ef

\setcounter{equation}{0}
\section{Proof  to the existence part of Theorem \ref{thm1.1} for $\f{dr}{3r-2}<p<d$}

In this case, as $-3+\f{d}{p}<-\f2r$,  it is impossible to find some
$p_1\geq p$ and $r_1\geq r$ so that $-3+\f{d}{p_1}=-\f2{r_1}$.
However, notice that for all $p_1\geq p$ and $r_1\geq r$, $\D u_0\in
\dot{B}^{-3+\f{d}{p_1}}_{p_1,r_1}(\R^d),$  then we deduce from
Proposition \ref{prop1.1} that $t^{\f12(3-\f{d}{p_1})-\f1{r_1}}\tri
e^{t\tri}u_0\in L^{r_1}(\R^+; L^{p_1}(\R^d)).$ Similarly, we choose
$p_2>\f{d}2$ and $p_3>d$ with $\f1{p_2}+\f1{p_3}=\f1{p_1}$, there
holds $t^{\f12(2-\f{d}{p_2})-\f1{r_1}}\na e^{t\tri}u_0\in
L^{r_1}(\R^+; L^{p_2}(\R^d))$ and $t^{\f12(1-\f{d}{p_3})}
e^{t\tri}u_0\in L^\infty(\R^+; L^{p_3}(\R^d)).$ With these time
weights before $e^{t\D}u_0,$ $\na e^{t\D}u_0$ and $\D e^{t\D}u_0,$
to prove the existence part of Theorem \ref{thm1.1} for
$\f{dr}{3r-2}<p<d$,
 we need to use the
following time-weighted version of maximal $L^p(L^q)$ regularizing
effect for the heat kernel:

\begin{lem}\label{lem3}
{\sl Let $1<p,q<\infty$ and $\alpha$ a non-negative real number
satisfying $\alpha +\f1p<1$. Let $\cA$ be the operator defined by
Lemma \ref{lem1}. Then if $t^\alpha f\in L^p((0,T);L^q(\R^d))$ for
some $T\in (0,\infty]$, $t^\alpha \cA f\in L^p((0,T);L^q(\R^d))$,
and there holds \beq\label{3.1kp}\|t^\alpha \cA f\|_{L^p_T(L^q)}\leq
\f{C}{\mu}\|t^\alpha f\|_{L^p_T(L^q)}.\eeq }
\end{lem}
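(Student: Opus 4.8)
The plan is to reduce the weighted estimate \eqref{3.1kp} to the unweighted maximal regularity of Lemma \ref{lem1} by exploiting the fact that $\cA$ commutes, in an approximate sense, with multiplication by time weights. First I would write $g(t,x)=t^\alpha f(t,x)$, so that the hypothesis is $g\in L^p_T(L^q)$ and the goal is to bound $t^\alpha \cA(t^{-\alpha}g)$ in the same space. Introducing the kernel representation $\cA f(t,x)=\int_0^t \tri e^{\mu(t-s)\tri}f(s,\cdot)\,ds$, one has
\beno
t^\alpha \cA(t^{-\alpha}g)(t,x)=\int_0^t \Bigl(\frac{t}{s}\Bigr)^\alpha \tri e^{\mu(t-s)\tri}g(s,\cdot)\,dx\,ds,
\eeno
so the whole point is to control the extra factor $(t/s)^\alpha\ge 1$ on the region $0<s<t$.

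The key step is a dyadic splitting of the time interval. On the ``diagonal'' part $t/2\le s\le t$ one has $1\le (t/s)^\alpha\le 2^\alpha$, so this piece is dominated pointwise by $2^\alpha \int_0^t |\tri e^{\mu(t-s)\tri}g(s)|\,ds$ and hence handled directly by Lemma \ref{lem1}. For the ``off-diagonal'' part $0<s<t/2$, the heat semigroup gains regularity: since $t-s\ge t/2$ there, the classical smoothing estimate gives $\|\tri e^{\mu(t-s)\tri}g(s)\|_{L^q}\le \frac{C}{\mu(t-s)}\|g(s)\|_{L^q}\le \frac{C}{\mu t}\|g(s)\|_{L^q}$, and so this contribution is bounded by a Hardy-type operator $\frac{C}{\mu}\,t^{\alpha-1}\int_0^{t/2} s^{-\alpha}\|g(s)\|_{L^q}\,ds$. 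The condition $\alpha+\frac1p<1$ is exactly what makes this one-dimensional weighted integral operator bounded on $L^p((0,T);dt)$: it is the classical Hardy inequality $\bigl\|t^{\alpha-1}\int_0^t s^{-\alpha}h(s)\,ds\bigr\|_{L^p_t}\le \frac{1}{1-\alpha-\frac1p}\|h\|_{L^p_t}$, valid precisely when $\alpha-1+\frac1p<0$ (together with $-\alpha$ not being too negative, automatic here since $\alpha\ge 0$). Combining the two pieces yields \eqref{3.1kp} with a constant depending only on $p,q,\alpha$ and linearly on $\mu^{-1}$.

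The main obstacle I expect is the off-diagonal term: one must be slightly careful that the two-sided conditions needed for boundedness of the weighted Hardy operator are met, i.e. that the weight exponents straddle the critical value so that the Muckenhoupt/Hardy criterion applies, and that the endpoint $T=\infty$ causes no trouble (it does not, since all estimates are scale-uniform in $t$). A secondary technical point is justifying the interchange of the time weight with the operator on rough $f$; this is routine by first proving the estimate for $f\in C_c^\infty((0,T)\times\R^d)$ and then passing to the limit using the a priori bound, exactly as one does for Lemma \ref{lem1} itself. No genuinely new harmonic analysis is required beyond Lemma \ref{lem1}, the elementary heat kernel decay $\|\tri e^{\tau\tri}\|_{L^q\to L^q}\lesssim \tau^{-1}$, and the scalar Hardy inequality.
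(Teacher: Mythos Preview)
Your overall strategy---split $\cA f$ at $s=t/2$ and treat the two pieces separately---is exactly the paper's, and your handling of the off-diagonal piece via the kernel bound $\|\tri e^{\mu\tau\tri}\|_{L^q\to L^q}\lesssim(\mu\tau)^{-1}$ followed by a Hardy-type inequality is correct; the paper does the same thing, writing the Hardy step as the change of variables $s=t\tau$ together with Minkowski's inequality.

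There is, however, a gap in your diagonal argument. You say the piece $\int_{t/2}^t(t/s)^\alpha\tri e^{\mu(t-s)\tri}g(s)\,ds$ is ``dominated pointwise by $2^\alpha\int_0^t|\tri e^{\mu(t-s)\tri}g(s)|\,ds$ and hence handled directly by Lemma~\ref{lem1}.'' The pointwise bound is true, but Lemma~\ref{lem1} controls $\cA g=\int_0^t\tri e^{\mu(t-s)\tri}g(s)\,ds$, \emph{not} the operator with absolute values inside the time integral. The latter is not bounded on $L^p_T(L^q)$: since $\|\tri e^{\mu(t-s)\tri}\|_{L^q\to L^q}\sim(\mu(t-s))^{-1}$ is non-integrable near $s=t$, maximal regularity relies on cancellation and cannot be deduced from a pointwise majorization. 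The paper avoids this by observing that $t^\alpha\approx s^\alpha$ on $[t/2,t]$ and then invoking \emph{the proof} (not merely the statement) of maximal regularity in~\cite{Lem}, whose Calder\'on--Zygmund argument is unaffected by multiplying the kernel by the bounded factor $(t/s)^\alpha$ on that region. A self-contained alternative is to write $(t/s)^\alpha=1+O((t-s)/t)$ for $s\in[t/2,t]$: the ``$1$'' contribution is $\cA g-\cA_1 g$, both bounded (the second by your own off-diagonal argument with $\alpha=0$, which needs only $p>1$), while the $O((t-s)/t)$ contribution cancels the kernel singularity at $s=t$ and is again a bounded Hardy-type average.
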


\begin{proof}
We first split the operator $\cA$ as \beq\label{3.1a}\cA f =
\Bigl(\int_0^{\f{t}{2}}+\int_{\f{t}{2}}^t\Bigr)\tri
e^{\mu(t-s)\tri}f\,ds\eqdefa\cA_1f+\cA_2f.\eeq Note that when $s\in
[\f{t}2,t],$ $t^\al$ is comparable to $s^\al,$ so that it follows
from the proof of Lemma 7.3 in \cite{Lem} that
$$\|t^\alpha \cA_2 f\|_{L^p_T(L^q)}\leq \f{C}{\mu}\|t^\alpha f\|_{L^p_T(L^q)}.$$
To handle $\cA_1f$ in \eqref{3.1a}, we write
$$t^\alpha\cA_1 f(t) = \int_0^{\f{t}{2}} \f{t^\alpha}{\mu(t-s)s^{\alpha}}\mu(t-s)\tri e^{\mu(t-s)\tri}(s^\alpha f)\,ds.$$
As $\mu t\tri e^{\mu t\tri}$ is a bounded operator from $L^q(\R^d)$
to $L^q(\R^d)$, we have
$$\|t^\alpha\cA_1 f(t)\|_{L^q(\R^d)}\leq \f{C}{\mu} \int_0^{\f{t}{2}} \f{t^\alpha}{(t-s)s^{\alpha}}\|s^\alpha f(s)\|_{L^q(\R^d)}\,ds.$$
Let $F_\al(s)\eqdefa\|s^\alpha f(s)\|_{L^q(\R^d)}$ and using the
change of variable that $s=t\tau,$  we obtain
$$\|t^\alpha\cA_1 f(t)\|_{L^q(\R^d)}\leq \f{C}{\mu} \int_0^{\f12}\f1{(1-\tau)\tau^\alpha}F_\al(t\tau)\,d\tau,$$
from which and Minkowski inequality, we infer
$$\|t^\alpha\cA_1 f\|_{L^p_T(L^q)}\leq \f{C}{\mu} \int_0^{\f12}\f1{(1-\tau)\tau^{\alpha+\f1p}}\,d\tau \|F_\al\|_{L^p_T},$$
which along with the fact: $\alpha +\f1p<1$,  concludes the proof of
\eqref{3.1kp}.
\end{proof}

In order to heal with the estimate of $u$ and $\na u,$ we need the
following lemmas, which will be used in the proof of  both the
existence part and uniqueness part of Theorem \ref{thm1.1}.

\begin{lem}\label{lem4}
{\sl Let the operator $\cB$ be defined by $f(t,x)\mapsto \int_0^t
\na e^{\mu(t-s)\tri}f\,ds$ and  $\cC$  by $f(t,x)\mapsto \int_0^t
e^{\mu(t-s)\tri}f\,ds$. Let $\e\geq 0$ be a small enough number,
$r_1>1,$ and $q_1$, $q_2,$  $q_3$ satisfy $ \f{d{r_1}}{2{r_1}-2}
<q_1<\f{d}{1-\ep}$, $d<q_3<\infty$ and $\f1{q_2}+\f1{q_3}=\f1{q_1}$.
We denote \beq\label{index}
\wt{\alpha}^\ep=\f12(3-\f{d}{q_1}-\ep)-\f1{r_1},\quad
\wt{\beta}_1^\ep=\f12(2-\f{d}{q_2}-\ep)-\f1{r_1} \andf
\wt{\gamma}_1^\ep=\f12(1-\f{d}{q_3}-\ep).\eeq  Then if
$t^{\wt{\alpha}^\ep} f\in L^{r_1}((0,T); L^{q_1}(\R^d))$ for some
$T\in (0,\infty]$, $t^{\wt{\beta}_1^\ep} \cB f\in L^{r_1}((0,T);
L^{q_2}(\R^d))$ and $t^{\wt{\gamma}_1^\ep} \cC f\in
L^\infty((0,T);L^{q_3}(\R^d))$, and there holds \beq\label{2.2kj}
\|t^{\wt{\beta}_1^\ep} \cB f\|_{L^{r_1}_T(L^{q_2})}\leq
\f{C_\e}{\mu^{\f12+\f{d}{2q_3}}}\|t^{\wt{\alpha}^\ep}
f\|_{L^{r_1}_T(L^{q_1})},\eeq  and \beq\label{2.3kn}
\|t^{\wt{\gamma}_1^\ep} \cC f\|_{L^\infty_T(L^{q_3})}\leq
\f{C_\e}{\mu^{\f{d}{2q_2}}}\|t^{\wt{\alpha}^\ep}
f\|_{L^{r_1}_T(L^{q_1})}. \eeq }
\end{lem}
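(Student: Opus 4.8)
The plan is to prove Lemma \ref{lem4} by reducing the two estimates \eqref{2.2kj} and \eqref{2.3kn} to the same type of scaling argument used in Lemma \ref{lem3}, namely by exploiting the explicit form of the heat kernel together with Young's inequality in space and Minkowski/Hardy--Littlewood--Sobolev in time. As a preliminary step, I would record the pointwise bounds on the convolution kernels: from \eqref{2.1a} one has, for any exponent $m$ with $1\le m\le\infty$,
\beno
\|\na e^{\mu(t-s)\tri}g\|_{L^{q_2}}\le \f{C}{(\mu(t-s))^{\f12+\f{d}2(\f1{q_1}-\f1{q_2})}}\|g\|_{L^{q_1}}
= \f{C}{(\mu(t-s))^{\f12+\f{d}{2q_3}}}\|g\|_{L^{q_1}},
\eeno
using $\f1{q_2}=\f1{q_1}-\f1{q_3}$; and similarly, with the kernel of $e^{\mu(t-s)\tri}$ in place of $\na e^{\mu(t-s)\tri}$,
\beno
\|e^{\mu(t-s)\tri}g\|_{L^{q_3}}\le \f{C}{(\mu(t-s))^{\f{d}2(\f1{q_1}-\f1{q_3})}}\|g\|_{L^{q_1}}=\f{C}{(\mu(t-s))^{\f{d}{2q_2}}}\|g\|_{L^{q_1}}.
\eeno
These are just Young's inequality applied to the Gaussian kernels, and they isolate exactly the $\mu$-powers claimed on the right-hand sides of \eqref{2.2kj} and \eqref{2.3kn}.

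Next I would feed these bounds into the time integrals. For \eqref{2.2kj}, set $F(s)\eqdefa\|t^{\wt{\alpha}^\ep}|_{t=s}\, f(s)\|_{L^{q_1}}=\|s^{\wt\alpha^\ep}f(s)\|_{L^{q_1}}$; then
\beno
\|t^{\wt{\beta}_1^\ep}\cB f(t)\|_{L^{q_2}}\le \f{C}{\mu^{\f12+\f{d}{2q_3}}}\int_0^t \f{t^{\wt\beta_1^\ep}}{(t-s)^{\f12+\f{d}{2q_3}}\,s^{\wt\alpha^\ep}}F(s)\,ds.
\eeno
The algebraic identity $\wt\beta_1^\ep + \wt\alpha^\ep - (\f12+\f{d}{2q_3}) = \wt\alpha^\ep$ — which I would verify directly from the definitions in \eqref{index}, noting $\f12(2-\f{d}{q_2})-\f12(3-\f{d}{q_1})=-\f12-\f{d}{2q_3}$ — shows this convolution is scaling-invariant, so after the substitution $s=t\tau$ one gets $\int_0^1 (1-\tau)^{-(\f12+\f{d}{2q_3})}\tau^{-\wt\alpha^\ep}F(t\tau)\,d\tau$, and Minkowski's inequality in $L^{r_1}_T$ together with the convergence of $\int_0^1 (1-\tau)^{-(\f12+\f{d}{2q_3})}\tau^{-\wt\alpha^\ep-\f1{r_1}}\,d\tau$ yields \eqref{2.2kj}. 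Here I must check that both exponents are $<1$: $\f12+\f{d}{2q_3}<1$ follows from $q_3>d$, and $\wt\alpha^\ep+\f1{r_1}=\f12(3-\f{d}{q_1}-\ep)<1$ follows from $q_1>\f{d r_1}{2r_1-2}$ (for $\ep\ge0$ small), which is precisely the hypothesis imposed on $q_1$. The estimate \eqref{2.3kn} is handled the same way but is actually simpler: one bounds $\|t^{\wt\gamma_1^\ep}\cC f(t)\|_{L^{q_3}}\le \f{C}{\mu^{\f{d}{2q_2}}}\int_0^t \f{t^{\wt\gamma_1^\ep}}{(t-s)^{\f{d}{2q_2}}s^{\wt\alpha^\ep}}F(s)\,ds$, checks the scaling identity $\wt\gamma_1^\ep+\wt\alpha^\ep-\f{d}{2q_2}=\wt\alpha^\ep -\f1{r_1}+\f{\ep}{2}$... — more precisely one computes the homogeneity and then applies Hölder in time with the conjugate exponent $r_1'$ of $r_1$, for which one needs $\f{d}{2q_2}+\wt\alpha^\ep+\f1{r_1}-\wt\gamma_1^\ep<1$, i.e. convergence of the relevant Beta integral, again guaranteed by $q_3>d$ and the lower bound on $q_1$; taking the $L^\infty_T$ norm in $t$ then gives \eqref{2.3kn}.

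I expect the main obstacle to be purely bookkeeping: namely verifying that the three weight exponents $\wt\alpha^\ep,\wt\beta_1^\ep,\wt\gamma_1^\ep$ in \eqref{index} are chosen exactly so that each time-convolution kernel is scale-invariant of degree $-1$ (so that the substitution $s=t\tau$ decouples the $t$-dependence), and that the resulting $\tau$-integrals over $(0,1)$ converge — this is where the constraints $q_1>\f{dr_1}{2r_1-2}$, $q_3>d$, and the smallness of $\ep$ enter, and where the constant $C_\e$ blowing up as $\ep\to(\text{its endpoint})$ comes from. A secondary, more delicate point for \eqref{2.3kn} is that $\cC$ maps into $L^\infty_T$ rather than $L^{r_1}_T$, so one cannot use Minkowski directly; instead one estimates $\sup_t$ of the time-convolution by Hölder with exponents $r_1$ and $r_1'$, which is where the precise value $\wt\gamma_1^\ep=\f12(1-\f{d}{q_3}-\ep)$ (with no $\f1{r_1}$ correction, unlike $\wt\alpha^\ep$ and $\wt\beta_1^\ep$) is needed to make the Hölder exponents balance. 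Apart from these index computations the proof is a routine adaptation of the argument already given for Lemma \ref{lem3}.
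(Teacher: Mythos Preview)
Your approach is essentially the same as the paper's: pointwise heat-kernel bounds via Young's inequality, then the scaling substitution $s=t\tau$ together with Minkowski (for $\cB$) or H\"older with exponent $r_1'$ (for $\cC$), followed by checking convergence of the resulting Beta integrals using $q_3>d$ and $q_1<\f{d}{1-\ep}$. The one genuine difference is that for the $\cC$ estimate the paper obtains the pointwise bound $\|e^{\mu(t-s)\tri}f\|_{L^{q_3}}\le C(\mu(t-s))^{-\f{d}{2q_2}}\|f\|_{L^{q_1}}$ by writing $e^{\mu(t-s)\tri}f=(\mu(t-s))^{-\f{d}{2q_2}}e^{\mu(t-s)\tri}(-\mu(t-s)\tri)^{\f{d}{2q_2}}(-\tri)^{-\f{d}{2q_2}}f$ and invoking the $L^{q_3}\to L^{q_3}$ boundedness of $e^{\tri}(-\tri)^\delta$ together with Hardy--Littlewood--Sobolev for $(-\tri)^{-\f{d}{2q_2}}$; your direct application of Young's inequality to the Gaussian kernel gives exactly the same bound with less machinery, so this is a simplification.

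A minor caution: several of the ``scaling identities'' you wrote down are miscomputed (for instance $\wt\beta_1^\ep\neq\f12+\f{d}{2q_3}$, and your check $\f12(2-\f{d}{q_2})-\f12(3-\f{d}{q_1})=-\f12-\f{d}{2q_3}$ has a sign error since $\f1{q_1}-\f1{q_2}=\f1{q_3}$). The correct relation, as the paper states it, is $\wt\beta_1^\ep-(\f12+\f{d}{2q_3})-\wt\alpha^\ep+1=0$, which is what actually makes the $t$-power vanish after the substitution. Similarly for $\cC$ the relevant identity is $1-\f{d}{2q_2}r_1'-\wt\alpha^\ep r_1'=-\wt\gamma_1^\ep r_1'$. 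These are bookkeeping slips, not structural problems; once you redo the algebra carefully the argument goes through exactly as you describe.
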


\begin{proof} We first get, by applying  Young's inequality to
\eqref{2.1a}, that \beq\label{2.2b}\begin{split}
 \|\na
e^{\mu(t-s)\tri}f(s,\cdot)\|_{L^{q_2}} \leq&
C(\mu(t-s))^{-\f{d+1}2}\|K(\f{\cdot}{\sqrt{4\mu(t-s)}})\|_{L^{\f{q_3}{q_3-1}}} \|f(s)\|_{L^{q_1}}\\
\leq& C(\mu(t-s))^{-(\f12+\f{d}{2q_3})}\|f(s)\|_{L^{q_1}}.
\end{split} \eeq
Then let $F^\e(s)\eqdefa\|s^{\wt{\alpha}^\ep} f(s)\|_{L^{q_1}}$, we
have\beno t^{\wt{\beta}_1^\ep}\| \cB f(t)\|_{L^{q_2}} \leq
C\mu^{-(\f12+\f{d}{2q_3})}
 t^{\wt{\beta}_1^\ep}\int_0^t (t-s)^{-(\f12+\f{d}{2q_3})} s^{-\wt{\alpha}^\ep}F^\e(s)\,ds.\eeno
Using change of variable with $s=t\tau$ and  the fact that
$\wt{\beta}_1^\ep-(\f12+\f{d}{2q_3})-\wt{\alpha}^\ep+1=0,$  we
obtain \beno t^{\wt{\beta}_1^\ep}\| \cB f(t)\|_{L^{q_2}} \leq
C\mu^{-(\f12+\f{d}{2q_3})} \int_0^1(1-\tau)^{-(\f12+\f{d}{2q_3})}
\tau^{-\wt{\alpha}^\ep}F^\e(t\tau)\,d\tau.\eeno Applying Minkowski
inequality leads to  \beno \begin{split} \|t^{\wt{\beta}_1^\ep} \cB
f\|_{L^{r_1}_T(L^{q_2})}\leq& C\mu^{-(\f12+\f{d}{2q_3})} \int_0^1
(1-\tau)^{-(\f12+\f{d}{2q_3})}
\tau^{-\wt{\alpha}^\ep-\f1{r_1}}\,d\tau\|F^\e\|_{L^{r_1}_T}\\
\leq& C\mu^{-(\f12+\f{d}{2q_3})} \int_0^1
(1-\tau)^{-(\f12+\f{d}{2q_3})}\tau^{-\f12(3-\f{d}{q_1}-\ep)}\,d\tau\|t^{\wt{\alpha}^\ep}
f\|_{L^{r_1}_T(L^{q_1})}. \end{split}\eeno Note that the assumption:
$q_3>d$, which together with $q_1<\f{d}{1-\ep}$ implies that
$0<\f12+\f{d}{2q_3}, \f12(3-\f{d}{q_1}-\ep)<1.$ This proves
\eqref{2.2kj}.

To deal with $\cC f$, we write
$$e^{\mu(t-s)\tri}f= (\mu(t-s))^{-\f{d}{2q_2}}e^{-\mu(t-s)\tri}
(-\mu(t-s)\tri)^{\f{d}{2q_2}}(- \tri)^{-\f{d}{2q_2}}f.$$ Observing
that for any $\delta>0$, $e^\tri (-\tri)^{\delta}$ is a bounded
linear operator from $L^{q_3}(\R^d)$ to $L^{q_3}(\R^d)$, and
$(-\tri)^{-\delta} f= |\cdot|^{-(d-2\delta)}\ast f$ for
$0<2\delta<d$, so that applying Hardy-Littlewood-Sobolev inequality
gives rise to \beq\label{2.2c}
\begin{split} \|e^{\mu(t-s)\tri}f(s)\|_{L^{q_3}}\leq&
C(\mu(t-s))^{-\f{d}{2q_2}}
\|(-\tri)^{-\f{d}{2q_2}}f(s)\|_{L^{q_3}}\\
\leq&C(\mu(t-s))^{-\f{d}{2q_2}}\|f(s)\|_{L^{q_1}}. \end{split}\eeq
Let $r_1'$ be the conjugate index of $r_1$, we get, by applying
H\"older's inequality, that \beq\label{2.2d} \begin{split} \|\cC
f(t)\|_{L^{q_3}}\leq& C\mu^{-\f{d}{2q_2}}\int_0^t
(t-s)^{-\f{d}{2q_2}}s^{-\wt{\alpha}^\ep}F^\e(s)\,ds\\
\leq& C\mu^{-\f{d}{2q_2}}\big( \int_0^t
(t-s)^{-\f{d}{2q_2}r_1'}s^{-\wt{\alpha}^\ep
r_1'}\,ds\big)^{\f1{r_1'}}\|t^{\wt{\alpha}^\ep}
f\|_{L^{r_1}_T(L^{q_1})}.\end{split}\eeq While using once again the
change of variable that $s=t\tau$, we obtain  \beno \begin{split}
\int_0^t (t-s)^{-\f{d}{2q_2}r_1'}s^{-\wt{\alpha}^\ep r_1'}\,ds
=&t^{1-\f{d}{2q_2}r_1'-\wt{\alpha}^\ep r_1'}\int_0^1
(1-\tau)^{-\f{d}{2q_2}r_1'}\tau^{-\wt{\alpha}^\ep r_1'}\,d\tau\\
=&t^{-\wt{\gamma}_1^\ep r_1'}\int_0^1
(1-\tau)^{-\f{d}{2q_2}r_1'}\tau^{-\wt{\alpha}^\ep r_1'}\,d\tau.
\end{split} \eeno Recalling the  assumptions that $q_2>q_1>\f{dr_1}{2r_1-2}$ and $q_1<\f{d}{1-\ep}$,
which imply $\f{d}{2p_2}r_1'<1$ and $\wt{\alpha}^\ep r_1'<1$. This
together with \eqref{2.2d} ensures that \beno \|\cC f(t)\|_{L^{q_3}}
\leq C_\e\mu^{-\f{d}{2q_2}}
t^{-\wt{\gamma}_1^\ep}\|t^{\wt{\alpha}^\ep}
f\|_{L^{r_1}_T(L^{q_1})}. \eeno This concludes the proof of
\eqref{2.3kn} and Lemma \ref{lem4}.
\end{proof}

In order to deal with the term $u^d\pa_du$ in \eqref{pesta}, we need
also the following lemma.

\begin{lem}\label{lem5}
{\sl Let $\e,$ $r_1$, $q_1$, $\wt{\alpha}^\ep$ and the operators
$\cB$, $\cC$ be given by Lemma \ref{lem4}. Let $r_1>2$ and $q_2$,
$q_3$ satisfy $\f1{q_2}+\f1{q_3}=\f1{q_1}$,
$\f{dr_1}{r_1-2}<q_3<\infty,$ we denote \beq\label{index1}
\wt{\beta}_2^\ep=\f12(2-\f{d}{q_2}-\ep) \andf
\wt{\gamma}_2^\ep=\f12(1-\f{d}{q_3}-\ep)-\f1{r_1}.\eeq Then if
$t^{\wt{\alpha}^\ep} f\in L^{r_1}((0,T);L^{q_1}(\R^d))$ for some
$T\in (0,\infty]$, $t^{\wt{\beta}_2^\ep} \cB f\in
L^\infty((0,T);L^{q_2}(\R^d)),$ $t^{\wt{\gamma}_2^\ep} \cC f\in
L^{r_1}((0,T);L^{q_3}(\R^d)),$ and there holds \beq\label{2.4}
\|t^{\wt{\beta}_2^\ep} \cB f\|_{L^{\infty}_T(L^{q_2})}\leq
\f{C_\e}{\mu^{\f12+\f{d}{2q_3}}}\|t^{\wt{\alpha}^\ep}
f\|_{L^{r_1}_T(L^{q_1})},\eeq and \beq\label{2.5}
\|t^{\wt{\gamma}_2^\ep} \cC f\|_{L^{r_1}_T(L^{q_3})}\leq
\f{C_\e}{\mu^{\f{d}{2q_2}}}\|t^{\wt{\alpha}^\ep}
f\|_{L^{r_1}_T(L^{q_1})}. \eeq }
\end{lem}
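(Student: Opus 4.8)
The plan is to mimic the proof of Lemma \ref{lem4} almost verbatim, exploiting the same pointwise kernel bounds \eqref{2.1a}--\eqref{2.2b} and \eqref{2.2c}, but now trading the $L^{r_1}$-in-time integrability for an $L^\infty$-in-time bound in \eqref{2.4} (at the price of a stronger time weight $\wt{\beta}_2^\ep$ with no $-\f1{r_1}$), and conversely in \eqref{2.5} producing an $L^{r_1}$-in-time bound on $\cC f$ valued in $L^{q_3}$ with the weight $\wt{\gamma}_2^\ep$ carrying the extra $-\f1{r_1}$. The key bookkeeping identities are $\wt{\beta}_2^\ep-(\f12+\f{d}{2q_3})-\wt{\alpha}^\ep+1=-\f1{r_1}$ and $\wt{\gamma}_2^\ep+\f{d}{2q_2}+\wt{\alpha}^\ep-1=-\f1{r_1'}$, which are forced by scaling; one checks them directly from \eqref{index}, \eqref{index1} and $\f1{q_2}+\f1{q_3}=\f1{q_1}$.

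For \eqref{2.4}: starting from \eqref{2.2b} I would write, with $F^\e(s)\eqdefa\|s^{\wt{\alpha}^\ep}f(s)\|_{L^{q_1}}$,
\beno
t^{\wt{\beta}_2^\ep}\|\cB f(t)\|_{L^{q_2}}\leq C\mu^{-(\f12+\f{d}{2q_3})} t^{\wt{\beta}_2^\ep}\int_0^t (t-s)^{-(\f12+\f{d}{2q_3})} s^{-\wt{\alpha}^\ep}F^\e(s)\,ds,
\eeno
then substitute $s=t\tau$ and use the identity above to pull the power of $t$ out as $t^{-\f1{r_1}}\cdot t^{1/r_1}$... more precisely $t^{\wt{\beta}_2^\ep-(\f12+\f{d}{2q_3})-\wt{\alpha}^\ep+1}=t^{-\f1{r_1}}$, giving
\beno
t^{\wt{\beta}_2^\ep}\|\cB f(t)\|_{L^{q_2}}\leq C\mu^{-(\f12+\f{d}{2q_3})}\int_0^1 (1-\tau)^{-(\f12+\f{d}{2q_3})}\tau^{-\wt{\alpha}^\ep}\bigl(t\tau\bigr)^{-\f1{r_1}}\bigl((t\tau)^{\f1{r_1}}F^\e(t\tau)\bigr)\,d\tau.
\eeno
Wait — since we want an $L^\infty_t$ bound, the cleaner route is: after the substitution the prefactor is $t^{-\f1{r_1}}$, and then apply H\"older in $\tau$ with exponents $r_1, r_1'$, bounding $\bigl(\int_0^1 (t\tau)F^\e(t\tau)^{r_1}\,\f{d\tau}{\tau}\bigr)^{1/r_1}\leq t^{1/r_1}\|F^\e\|_{L^{r_1}_T}$ (the measure $\f{d\tau}{\tau}$ pairs with a change of variable back to $s$), so the $t^{-1/r_1}$ cancels and one is left with the convergent $\tau$-integral $\int_0^1 (1-\tau)^{-(\f12+\f{d}{2q_3})r_1'}\tau^{-\wt{\alpha}^\ep r_1'-1+?}\,d\tau$; convergence near $\tau=1$ needs $(\f12+\f{d}{2q_3})r_1'<1$, i.e. $q_3>\f{dr_1}{r_1-2}$, which is exactly the new hypothesis $\f{dr_1}{r_1-2}<q_3$, and convergence near $\tau=0$ needs $\wt{\alpha}^\ep r_1'<1$, which follows from $q_1>\f{dr_1}{2r_1-2}$ as in Lemma \ref{lem4}. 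For \eqref{2.5}: from \eqref{2.2c} and H\"older in $s$ with exponents $r_1',r_1$,
\beno
\|\cC f(t)\|_{L^{q_3}}\leq C\mu^{-\f{d}{2q_2}}\Bigl(\int_0^t (t-s)^{-\f{d}{2q_2}r_1'}s^{-\wt{\alpha}^\ep r_1'}\,ds\Bigr)^{\f1{r_1'}}\|t^{\wt{\alpha}^\ep}f\|_{L^{r_1}_T(L^{q_1})},
\eeno
and $s=t\tau$ turns the bracketed integral into $t^{1-\f{d}{2q_2}r_1'-\wt{\alpha}^\ep r_1'}\int_0^1(1-\tau)^{-\f{d}{2q_2}r_1'}\tau^{-\wt{\alpha}^\ep r_1'}\,d\tau = t^{-\wt{\gamma}_2^\ep r_1'}\times(\text{const})$, using $1-\f{d}{2q_2}r_1'-\wt{\alpha}^\ep r_1'=-\wt{\gamma}_2^\ep r_1'$; raising to $\f1{r_1'}$ gives the factor $t^{-\wt{\gamma}_2^\ep}$, hence $\|t^{\wt{\gamma}_2^\ep}\cC f\|_{L^\infty_T(L^{q_3})}$ — but we want $L^{r_1}_T$, which is weaker, so it follows a fortiori on any finite interval; for $T=\infty$ one argues directly that the $\tau$-integral is a genuine constant independent of $t$ so the $L^{r_1}_T$ statement is what the scaling actually produces when one does not apply H\"older prematurely — I would instead keep $F^\e(s)$ inside, use the substitution, and apply Minkowski's integral inequality in $L^{r_1}_T$ exactly as in the $\cC$-part of Lemma \ref{lem4}, obtaining $\|t^{\wt{\gamma}_2^\ep}\cC f\|_{L^{r_1}_T(L^{q_3})}\leq C_\e\mu^{-\f{d}{2q_2}}\int_0^1(1-\tau)^{-\f{d}{2q_2}}\tau^{-\wt{\alpha}^\ep-\f1{r_1}}\,d\tau\,\|t^{\wt{\alpha}^\ep}f\|_{L^{r_1}_T(L^{q_1})}$, where convergence at $\tau=0$ needs $\wt{\alpha}^\ep+\f1{r_1}=\f12(3-\f d{q_1}-\ep)-\f1{r_1}<1$ (from $q_1<\f d{1-\ep}$, $\ep$ small) and at $\tau=1$ needs $\f{d}{2q_2}<1$ (from $q_2>q_1>d/2$).

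The only genuinely delicate point — and the one I would be most careful about — is the borderline integrability near $\tau=1$ in \eqref{2.4}: the exponent $(\f12+\f{d}{2q_3})r_1'$ must be strictly less than $1$, and this is precisely why the hypothesis $q_3>\f{dr_1}{r_1-2}$ (equivalently $r_1>2$, so that $r_1'<\infty$ and the window $(\f{dr_1}{r_1-2},\infty)$ for $q_3$ is nonempty) appears in this lemma but not in Lemma \ref{lem4}; one should double-check that this is consistent with the earlier constraint $q_2>q_1>\f{dr_1}{2r_1-2}$ and with $d<q_3$, i.e. that the interval of admissible $q_3$ is nonempty, which it is for $r_1>2$. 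Everything else is a routine repetition of the Young/Hardy–Littlewood–Sobolev plus Minkowski scheme already displayed in the proofs of Lemma \ref{lem2a} and Lemma \ref{lem4}, with the time weights chosen exactly so that the scaling exponent of $t$ comes out to $-\f1{r_1}$ (for the $L^\infty_t$ estimate) or so that it is absorbed cleanly by Minkowski (for the $L^{r_1}_t$ estimate).
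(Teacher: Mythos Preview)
Your proposal is correct in substance and follows essentially the same route as the paper: kernel bound \eqref{2.2b} plus H\"older in time for \eqref{2.4}, and kernel bound \eqref{2.2c} plus Minkowski for \eqref{2.5}, with the same convergence checks $(\f12+\f{d}{2q_3})r_1'<1$, $\wt{\alpha}^\ep r_1'<1$, $\f{d}{2q_2}<1$. One correction: your first bookkeeping identity has the wrong sign --- in fact $\wt{\beta}_2^\ep-(\f12+\f{d}{2q_3})-\wt{\alpha}^\ep+1=+\f1{r_1}$, not $-\f1{r_1}$ (and correspondingly the H\"older factor $\bigl(\int_0^1 F^\e(t\tau)^{r_1}d\tau\bigr)^{1/r_1}\leq t^{-1/r_1}\|F^\e\|_{L^{r_1}_T}$ carries the minus); the two powers still cancel, so the conclusion is unaffected, but the paper avoids this bookkeeping entirely by applying H\"older \emph{before} the substitution $s=t\tau$, which directly yields $\bigl(\int_0^t (t-s)^{-(\f12+\f{d}{2q_3})r_1'}s^{-\wt{\alpha}^\ep r_1'}ds\bigr)^{1/r_1'}=Ct^{-\wt{\beta}_2^\ep}$ and is tidier.
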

\begin{proof}
We first get, by a similar derivation of \eqref{2.2b}, that \beno
\begin{split}
\|\cB f(t)\|_{L^{q_2}} \leq& C\mu^{-(\f12+\f{d}{2q_3})} \int_0^t
(t-s)^{-(\f12+\f{d}{2q_3})}
s^{-\wt{\alpha}^\ep}F^\e(s)\,ds\\
\leq&C\mu^{-(\f12+\f{d}{2q_3})} \Bigl( \int_0^t
(t-s)^{-(\f12+\f{d}{2q_3})r_1'} s^{-\wt{\alpha}^\ep
r_1'}\,ds\Bigr)^{\f1{r_1'}}\|t^{\wt{\alpha}^\ep}
f\|_{L^{r_1}_T(L^{q_1})}.
\end{split}\eeno Whereas using the change of variable: $s=t\tau,$ leads to \beno
\begin{split} \int_0^t (t-s)^{-(\f12+\f{d}{2q_3})r_1'}s^{-\wt{\alpha}^\ep r_1'}\,ds
=&t^{1-(\f12+\f{d}{2q_3})r_1'-\wt{\alpha}^\ep r_1'}\int_0^1
(1-\tau)^{-(\f12+\f{d}{2q_3})r_1'}\tau^{-\wt{\alpha}^\ep r_1'}\,d\tau\\
=&t^{-\wt{\beta}_2^\ep r_1'}\int_0^1
(1-\tau)^{-(\f12+\f{d}{2q_3})r_1'}\tau^{-\wt{\alpha}^\ep
r_1'}\,d\tau.
\end{split} \eeno By virtue of the assumptions: $q_3> \f{dr_1}{r_1-2}$ and
$q_1<\f{d}{1-\ep},$ we have $0<(\f12+\f{d}{2q_3})r_1',\ \alpha^\ep
r_1'<1$. As a consequence, we obtain \beno \|\cB
f(t)\|_{L^{q_2}}\leq C_\e\mu^{-(\f12+\f{d}{2q_3})}
t^{-\wt{\beta}_2^\ep}\|t^{\wt{\alpha}^\ep} f\|_{L^{r_1}_T(L^{q_1})},
\eeno which yields \eqref{2.4}.

On the other hand, it follows the same line of \eqref{2.2c} that
\beno\|e^{\mu(t-s)\tri}f(s)\|_{L^{q_3}} \leq
C(\mu(t-s))^{-\f{d}{2q_2}}\|f(s)\|_{L^{q_1}}, \eeno which implies
\beno t^{\wt{\gamma}_2^\ep}\|\cC f(t)\|_{L^{q_3}} \leq
C\mu^{-\f{d}{2q_2}}
t^{\wt{\gamma}^\ep_2}\int_0^t(t-s)^{-\f{d}{2q_2}}
s^{-\wt{\alpha}^\ep}F^\e(s)\,ds. \eeno Using changing of variable
with $s=t\tau$ and the fact:
$\wt{\gamma}_2^\ep-\f{d}{2q_2}-\wt{\alpha}^\ep+1=0,$ we obtain \beno
t^{\wt{\gamma}_2^\ep}\|\cC f(t)\|_{L^{q_3}} \leq C\mu^{-\f{d}{2q_2}}
\int_0^1(1-\tau)^{-\f{d}{2q_2}} \tau^{-\wt{\alpha}^\ep}
F^\e(t\tau)\,d\tau,\eeno from which, we deduce \beno
\begin{split} \|t^{\wt{\gamma}_2^\ep} \cC
f\|_{L^{r_1}_T(L^{q_3})}\leq& C\mu^{-\f{d}{2q_2}} \int_0^1
(1-\tau)^{-\f{d}{2q_2}}
\tau^{-\wt{\alpha}^\ep-\f1{r_1}}\,d\tau\|F^\e\|_{L^{r_1}_T}\\
\leq& C\mu^{-\f{d}{2q_2}} \int_0^1
(1-\tau)^{-\f{d}{2q_2}}\tau^{-\f12(3-\f{d}{q_1}-\ep)}\,d\tau\|t^{\wt{\alpha}^\ep}
f\|_{L^{r_1}_T(L^{q_1})}, \end{split} \eeno which along with the
facts: $q_2>q_1>\f{dr_1}{2r_1-2}>\f{d}2$,  ensures the integral
above is finite. This gives \eqref{2.5} and we complete the proof of
the lemma.
\end{proof}

In what follows, we take $r_1=2r>2$ and $p_1, p_2$ and $p_3$
satisfying $\max (p, \f{dr}{2r-1})<p_1<d$,
 and $\f{dr}{r-1}<p_3<\infty$ so that $\f1{p_2}+\f1{p_3}=\f1{p_1}.$
 We shall seek a solution $(a,u,\na\Pi)$
of \eqref{INS} in the following functional space: \beq
\label{3.6}\begin{split} X=\Bigl\{\ u: \, \,&a\in
L^\infty(\R^+\times\R^d),\quad t^{\gamma_1} u\in L^\infty(\R^+;
L^{p_3}(\R^d)), \\
&t^{\gamma_2} u\in L^{2r}(\R^+; L^{p_3}(\R^d)),\quad t^{\beta_1}\na
u \in L^{2r}(\R^+;L^{p_2}(\R^d)),\, \, \\
 &t^{\beta_2}\na u \in
L^{\infty}(\R^+;L^{p_2}(\R^d))£¬\quad t^{\alpha_1}\tri u \in
L^{2r}(\R^+; L^{p_1}(\R^d))\,\Bigr\},\end{split}\eeq with the
indices  $\alpha_1$, $\beta_1$, $\beta_2$, $\gamma_1$, $\gamma_2$
being determined by \eqref{indexad}.

 We construct the approximate solution
 $(a_n,u_n, \na\Pi_n)$  through \eqref{schema}. Similar to Proposition \ref{uniformbound}, we  have the following proposition concerning
the uniform time-weighted bounds of $(a_n,u_n)$.

\begin{prop}\label{uniformbound1}
{\sl Under the assumptions of Theorem \ref{thm1.1}, \eqref{schema}
has a unique global smooth solution $(a_n,u_n,\na\Pi_n)$ which
satisfies
 \beq\label{3.7}
\begin{split} \mu^{\f12(3-\f{d}{p_1})}&\|t^{\alpha_1} \D u_{n}^h\|_{L^{2r}(\R^+; L^{p_1})}
+ \mu^{1-\f{d}{2p_2}}\bigl(\|t^{\beta_1}\na u_{n}^h\|_{L^{2r}(\R^+;
L^{p_2})}+
 \|t^{\beta_2}\na u_{n}^h\|_{L^{\infty}(\R^+; L^{p_2})}\bigr)\\
&+
\mu^{\f12(1-\f{d}{p_3})}\bigl(\|t^{\gamma_1}u_{n}^h\|_{L^{\infty}(\R^+;L^{p_3})}
+
\|t^{\gamma_2}u_{n}^h\|_{L^{2r}(\R^+;L^{p_3})}\bigr)\leq C\eta,\\
\mu^{\f12(3-\f{d}{p_1})}&\|t^{\alpha_1} \D u_{n}^d\|_{L^{2r}(\R^+;
L^{p_1})}
 + \mu^{1-\f{d}{2p_2}}\bigl(\|t^{\beta_1}\na u_{n}^d\|_{L^{2r}(\R^+; L^{p_2})}+\|t^{\beta_2}\na u_{n}^d\|_{L^{\infty}(\R^+; L^{p_2})}\bigr)
 \\
 &+\mu^{\f12(1-\f{d}{p_3})}\bigl(\|t^{\gamma_1}u_{n}^d\|_{L^{\infty}(\R^+;L^{p_3})}
+\|t^{\gamma_2}u_{n}^d\|_{L^{2r}(\R^+;L^{p_3})}\bigr)\leq
C\|u_{0}^d\|_{\dot{B}^{-1+\f dp}_{p,r}}+ c\mu,\end{split}\eeq and
\beq\label{3.17}\begin{split}
&\mu^{\f12(3-\f{d}{p_1})}\|t^{\alpha_2} \D u_n\|_{L^{r}(\R^+;
L^{p_1})}\leq C\bigl(\|
u_{0}\|_{\dot{B}^{-1+\f{d}p}_{p,r}}+\f{\eta}\mu(\|
u_{0}^d\|_{\dot{B}^{-1+\f{d}p}_{p,r}}+c\mu+\eta)\bigr),\\
&\mu^{\f12(3-\f{d}{p_1})}\bigl(\|t^{\alpha_1}
\na\Pi_n\|_{L^{2r}(\R^+; L^{p_1})}+\|t^{\alpha_2}
\na\Pi_n\|_{L^{r}(\R^+; L^{p_1})}\bigr)\leq
C\eta\bigl(\|u_{0}^d\|_{\dot{B}^{-1+\f dp}_{p,r}}+ c\mu\bigr)
\end{split}
\eeq for some small enough constant $c,$ $\eta$ being given by
\eqref{small1}  and $\al_2$  by \eqref{indexad}.}
\end{prop}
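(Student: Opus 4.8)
The plan is to transcribe, in the time-weighted setting, the argument already carried out for Proposition \ref{uniformbound}, replacing the unweighted maximal-regularity estimates of Lemmas \ref{lem1}--\ref{lem2a} by their weighted counterparts, Lemmas \ref{lem3}, \ref{lem4} and \ref{lem5}. First I would note that for $N$ large enough \eqref{schema} has a unique local smooth solution $(a_n,u_n,\na\Pi_n)$ on a maximal interval $[0,T_n^\ast)$, and that the transport equation gives $\|a_n(t)\|_{L^\infty}\le\|S_{N+n}a_0\|_{L^\infty}\le\|a_0\|_{L^\infty}$ for $t<T_n^\ast$. Throughout one fixes $r_1=2r$ and $p_1,p_2,p_3$ in the ranges stated in the proposition, which is precisely what makes all the Beta-function integrals appearing in Lemmas \ref{lem4} and \ref{lem5} converge.

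The pressure would be handled exactly as in Proposition \ref{uniformbound}: taking the divergence of the momentum equation of \eqref{schema} gives $-\tri\Pi_n=\dive(a_n\na\Pi_n)-\mu\dive(a_n\tri u_n)+\dive(u_n\cdot\na u_n)$, and the crucial point is that $\dive u_n=0$ lets one rewrite $\dive(u_n\cdot\na u_n)$ using only horizontal derivatives, so that the quadratic term contributes products involving $\na u_n^h$ but never $\na u_n^d$ alone. Absorbing $\dive(a_n\na\Pi_n)$ by the smallness of $\|a_0\|_{L^\infty}$, one gets a pointwise-in-$t$ bound on $\|\na\Pi_n(t)\|_{L^{p_1}}$ by $\mu\|a_0\|_{L^\infty}\|\tri u_n(t)\|_{L^{p_1}}$, $(\|u_n^h(t)\|_{L^{p_3}}+\|u_n^d(t)\|_{L^{p_3}})\|\na u_n^h(t)\|_{L^{p_2}}$ and $\|u_n^h(t)\|_{L^{p_3}}\|\na u_n^d(t)\|_{L^{p_2}}$ (H\"older, $\f1{p_2}+\f1{p_3}=\f1{p_1}$). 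Multiplied by the time weights $t^{\alpha_1}$ or $t^{\alpha_2}$ and using the identities $\alpha_1=\gamma_1+\beta_1$, $\alpha_2=\gamma_2+\beta_1$ from \eqref{indexad}, this will directly yield the $\na\Pi_n$ bound in \eqref{3.17} once the velocity estimates are in hand.

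The core step is to write the momentum equation in the Duhamel form \eqref{pesta} and, to control the dangerous term $u_n^d\pa_d u_n^h$, to multiply by the exponential weight $\exp\{-\int_0^t(\la_1 f_{1,n}+\la_2 f_{2,n})\}$, where $f_{1,n}$ and $f_{2,n}$ are $\|t^{\beta_1}\na u_n^d(t)\|_{L^{p_2}}$ and $\|t^{\alpha_1}\tri u_n^d(t)\|_{L^{p_1}}$ raised to the power $2r$. One then applies Lemma \ref{lem3} (at exponent $2r$) to the $\mu a_n\tri u_n^h$ and pressure contributions, Lemma \ref{lem4} to bound $\|t^{\beta_1}\na(\cdot)\|_{L^{2r}_t(L^{p_2})}$ and $\|t^{\gamma_1}(\cdot)\|_{L^\infty_t(L^{p_3})}$, and Lemma \ref{lem5} to bound $\|t^{\beta_2}\na(\cdot)\|_{L^\infty_t(L^{p_2})}$ and $\|t^{\gamma_2}(\cdot)\|_{L^{2r}_t(L^{p_3})}$, while the free evolution $e^{\mu t\tri}S_{n+N}u_0$ is controlled by Proposition \ref{prop1.1}, which is exactly what puts $t^{\alpha_1}\tri e^{\mu t\tri}u_0$ in $L^{2r}_t(L^{p_1})$ and gives the analogous statements for $\na e^{\mu t\tri}u_0$ and $e^{\mu t\tri}u_0$. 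The exponential weight converts the $u_n^d\pa_d u_n^h$ term into $\la_j^{-\f1{2r}}$ times the $u_n^h$-part of the norm, so choosing $\la_1,\la_2$ as suitable large powers of $\mu$ absorbs it; the genuinely $u^h$-quadratic piece $\|t^{\gamma_2}u_n^h\|_{L^{2r}_t(L^{p_3})}\|t^{\beta_1}\na u_n^h\|_{L^{2r}_t(L^{p_2})}$ is then closed by a continuity argument, defining $\bar T_n$ as the largest time up to which a smallness bound of the form $\mu^{\f12(1-\f{d}{p_3})}\|t^{\gamma_2}u_n^h\|_{L^{2r}_t(L^{p_3})}+\cdots\le c_1\mu$ holds and showing the full $u_n^h$-norm is $\le\f{c_1}2\mu$ on $[0,\bar T_n)$. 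Running the same Duhamel estimate for $u_n^d$ (no absorption needed, the $u^h$ factors being small on $[0,\bar T_n)$) gives the $u_n^d$-line of \eqref{3.7} with right-hand side $C\|u_0^d\|_{\dot B^{-1+\f dp}_{p,r}}+c\mu$; inserting this back into the exponential factor $\exp\{C_r\mu^{-2r}\|u_0^d\|^{2r}_{\dot B^{-1+\f dp}_{p,r}}\}$ produced when the weight is removed, and invoking \eqref{small1}, gives $\bar T_n=T_n^\ast$ and the $u_n^h$-bound $\le C\eta$; a blow-up criterion (Theorem 1.3 of \cite{Kim}) upgrades this to $T_n^\ast=\infty$, hence \eqref{3.7}. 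Finally \eqref{3.17}: the $\na\Pi_n$ bounds follow by feeding \eqref{3.7} into the pointwise pressure estimate (H\"older in time, two $L^{2r}_t$ factors merging into $L^r_t$, weights adding by $\alpha_1=\gamma_1+\beta_1$ and $\alpha_2=\gamma_2+\beta_1$), and the $t^{\alpha_2}\tri u_n$ bound in $L^r_t(L^{p_1})$ from re-running the Duhamel estimate with Lemma \ref{lem3} at exponent $r$ — the weight $\alpha_2=\alpha_1-\f1{2r}$ being exactly the one for which $t^{\alpha_2}\tri e^{\mu t\tri}u_0\in L^r_t(L^{p_1})$ — and estimating the quadratic and $a_n$-terms by the bounds just obtained, which produces the $\f{\eta}{\mu}$-type right-hand side of \eqref{3.17}.

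The hard part will be the simultaneous closure of the coupled $u^h$/$u^d$ estimates via the exponential-weight device: $\la_1,\la_2$ must be large enough that every $\na u^d$ or $\tri u^d$ occurring inside the $u^h$-equation is dominated, yet their ``cost'' upon returning to the $u^h$-norm must be exactly the exponential in \eqref{small1}, and keeping the powers of $\mu$ in $\la_1,\la_2$, the time-weight identities \eqref{indexad}, and the convergence of the Beta integrals (which force $\max(p,\f{dr}{2r-1})<p_1<d$ and $\f{dr}{r-1}<p_3<\infty$) mutually consistent is the delicate bookkeeping; the remainder is a faithful weighted copy of the case $p\le\f{dr}{3r-2}$ already treated.
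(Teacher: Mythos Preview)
Your overall architecture is right, but the exponential-weight device as you describe it does not close: you propose only two weight functions, $f_{1,n}=\|t^{\beta_1}\na u_n^d(t)\|_{L^{p_2}}^{2r}$ and $f_{2,n}=\|t^{\alpha_1}\tri u_n^d(t)\|_{L^{p_1}}^{2r}$, but neither of these controls the zeroth-order quantity $\|t^{\gamma_2}u_n^d(t)\|_{L^{p_3}}$. The term $u_n^d\pa_d u_{\la,n}^h$ enters the $L^{2r}_t(L^{p_1})$ source with weight $t^{\alpha_1}$, and since $\alpha_1=\gamma_2+\beta_2$ (not $\gamma_2+\beta_1$, which is $\alpha_2$), the only H\"older splitting compatible with the $L^{2r}_t$ target is
\[
\|t^{\alpha_1}u_n^d\pa_d u_{\la,n}^h\|_{L^{2r}_t(L^{p_1})}
\le\Bigl(\int_0^t e^{-2r\la\int_s^t f}\,\|s^{\gamma_2}u_n^d(s)\|_{L^{p_3}}^{2r}\|s^{\beta_2}\na u_{\la,n}^h(s)\|_{L^{p_2}}^{2r}\,ds\Bigr)^{\f1{2r}},
\]
and to convert this into $\la^{-\f1{2r}}\|t^{\beta_2}\na u_{\la,n}^h\|_{L^\infty_t(L^{p_2})}$ you must include a \emph{third} weight $f_{2,n}'(t)\eqdefa\|t^{\gamma_2}u_n^d(t)\|_{L^{p_3}}^{2r}$ in the exponential, exactly as the paper does (its $f_{2,n}$ in \eqref{3.8}); there is no Sobolev relation tying $\|u_n^d\|_{L^{p_3}}$ to $\|\na u_n^d\|_{L^{p_2}}$ or $\|\tri u_n^d\|_{L^{p_1}}$ for the general exponents at hand.

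Relatedly, your description of the $u^h$-quadratic term and of $\bar T_n$ uses the wrong pairing: the product $\|t^{\gamma_2}u_n^h\|_{L^{2r}_t(L^{p_3})}\|t^{\beta_1}\na u_n^h\|_{L^{2r}_t(L^{p_2})}$ lands in $L^{r}_t$ with weight $\alpha_2$, not in $L^{2r}_t$ with weight $\alpha_1$, so it cannot be fed back into Lemmas \ref{lem3}--\ref{lem5} at the level you need. The correct closure (and the one the paper uses) is $\|t^{\gamma_1}u_n^h\|_{L^\infty_t(L^{p_3})}\|t^{\beta_1}\na u_{\la,n}^h\|_{L^{2r}_t(L^{p_2})}$, and accordingly $\bar T_n$ must be defined via the $L^\infty_t$ quantity $\|t^{\gamma_1}u_n^h\|_{L^\infty_t(L^{p_3})}$ (plus the $\na u_n^h$ and $\tri u_n^h$ norms), not via $\|t^{\gamma_2}u_n^h\|_{L^{2r}_t(L^{p_3})}$. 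With these two corrections --- three weights, and the $(\gamma_1,\beta_1)$/$(\gamma_2,\beta_2)$ pairings for the $L^{2r}_t$ source --- the remainder of your outline goes through and matches the paper.
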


\begin{proof} For $N$ large enough, it is easy to prove that
\eqref{schema} has a unique local smooth solution $(a_n, u_n)$ on
$[0,T_n^\ast).$ Without loss of generality, we may assume that
$T_n^\ast$ is the lifespan to this solution. For $\la_1, \la_2,
\la_3>0,$  we denote \beq \label{3.8}\begin{split}
&f_{1,n}(t)\eqdefa \|t^{\beta_1}\na u^d_n(t)\|_{L^{p_2}}^{2r} ,\quad
f_{2,n}(t)\eqdefa \|t^{\gamma_2}u^d_{n}(t)\|_{L^{p_3}}^{2r},\quad
f_{3,n}(t)\eqdefa
\| t^{\alpha_1}\D u_{n}^d(t)\|_{L^{p_1}}^{2r},\\
&u_{\la,n}(t,x)\eqdefa u_n(t,x)\exp\Bigl\{-
\int_0^t\bigl(\la_1f_{1,n}(t')+\la_2f_{2,n}(t')+\la_3f_{3,n}(t')\bigr)\,dt'\Bigr\},
\end{split} \eeq and similar notation for
$\Pi_{\la,n}(t,x).$ Then  we deduce by a similar derivation of
\eqref{2.3a} that \beno
\begin{split}
\|\na \Pi_{\la,n}(t)\|_{L^{p_1}}\leq
&C\Bigl\{\|a_{0}\|_{L^\infty}\|\na \Pi_{\la,n}(t)\|_{L^{p_1}}
+\mu\|a_{0}\|_{L^\infty}\|\tri
u_{\la,n}\|_{L^{p_1}}+\|u_{n}^h\|_{L^{p_3}}\|\na
u_{\la,n}^h\|_{L^{p_2}}\\
&+\|u_n^{d}\|_{L^{p_3}}\|\na
u_{\la,n}^h\|_{L^{p_2}}+\|u_{\la,n}^h\|_{L^{p_3}}\|\na
u_n^{d}\|_{L^{p_2}}\Bigr\}.
\end{split} \eeno
 In
particular, if $\eta$ in \eqref{small1} is so small that
$C\|a_0\|_{L^\infty}\leq \f12$, we obtain \beq\label{pest1}
\begin{split}
\|\na \Pi_{\la,n}(t)\|_{L^{p_1}}\leq &
C\Bigl\{\mu\|a_0\|_{L^\infty}\|\tri
u_{\la,n}\|_{L^{p_1}}+\|u_{n}^h\|_{L^{p_3}}\|\na
u_{\la,n}^h\|_{L^{p_2}}\\
&+\|u_n^{d}\|_{L^{p_3}}\|\na
u_{\la,n}^h\|_{L^{p_2}}+\|u_{\la,n}^h\|_{L^{p_3}}\|\na
u_n^{d}\|_{L^{p_2}}\Bigr\}.
\end{split}
\eeq While it follows form \eqref{pesta} and \eqref{3.8} that \beq
\label{3.9}
\begin{split}
u_{\la,n}^h=& u_{n,L}^h\exp\Bigl\{-
\int_0^t(\la_1f_{1,n}(t')+\la_2f_{2,n}(t')+\la_3f_{3,n}(t'))\,dt'\Bigr\}\\
&+ \int_0^t e^{\mu(t-s)\Delta}\exp\Bigl\{-
\int_s^t(\la_1f_{1,n}(t')+\la_2f_{2,n}(t')+\la_3f_{3,n}(t'))\,dt'\Bigr\}\\
&\qquad\times\bigl(-u_{n}\nabla u_{\la,n}^h+\mu a_{n}\Delta
u_{\la,n}^h-(1+a_{n})\na_h \Pi_{\la,n}\bigr)\,ds.
\end{split}\eeq
Applying Lemma \ref{lem4} for $\e=0$ and $r_1=2r$ gives \beno
\begin{split} \mu^{\f{d}{2p_2}}&\|t^{\gamma_1}u_{\la,n}^h\|_{L^{\infty}_t(L^{p_3})}
 + \mu^{\f12+\f{d}{2p_3}}\|t^{\beta_1}\na
 u_{\la,n}^h\|_{L^{2r}_t(L^{p_2})}\\
 \leq & \mu^{\f{d}{2p_2}}\|t^{\gamma_1}u_{n,L}^h\|_{L^{\infty}_t(L^{p_3})}
 + \mu^{\f12+\f{d}{2p_3}}\|t^{\beta_1}\na
 u_{n,L}^h\|_{L^{2r}_t(L^{p_2})}\\
 &+\bigl\|\exp\bigl\{-
\int_s^t(\la_1f_{1,n}(t')+\la_2f_{2,n}(t')+\la_3f_{3,n}(t'))\,dt'\bigr\}\\
&\qquad\times s^{\alpha_1}\bigl(-u_{n}\nabla u_{\la,n}^h+\mu
a_{n}\Delta u_{\la,n}^h-(1+a_{n})\na_h
\Pi_{\la,n}\bigr)\bigr\|_{L^{2r}_t(L^{p_1})},
 \end{split}
 \eeno for indices $\beta_1,\gamma_1$ given by \eqref{indexad}.

Similarly, it follows from Lemma \ref{lem3} and Lemma \ref{lem5}
for $\e=0$ and $r_1=2r$ that \beno
\begin{split}
 \mu^{\f{d}{2p_2}}&\|t^{\gamma_2}u_{\la,n}^h\|_{L^{2r}_t(L^{p_3})}
 +\mu^{\f12+\f{d}{2p_3}}\|t^{\beta_2}\na u_{\la,n}^h\|_{L^{\infty}_t(L^{p_2})}+\mu\|t^{\alpha_1} \D u_{\la,n}^h\|_{L^{2r}_t(L^{p_1})}\\
\leq &\mu^{\f{d}{2p_2}}\|t^{\gamma_2}u_{n,L}^h\|_{L^{2r}_t(L^{p_3})}
 +\mu^{\f12+\f{d}{2p_3}}\|t^{\beta_2}\na u_{n,L}^h\|_{L^{\infty}_t(L^{p_2})}+\mu\|t^{\alpha_1} \D u_{n,L}^h\|_{L^{2r}_t(L^{p_1})}\\
 &+\bigl\|\exp\bigl\{-
\int_s^t(\la_1f_{1,n}(t')+\la_2f_{2,n}(t')+\la_3f_{3,n}(t'))\,dt'\bigr\}\\
&\qquad\times s^{\alpha_1}\bigl(-u_{n}\nabla u_{\la,n}^h+\mu
a_{n}\Delta u_{\la,n}^h-(1+a_{n})\na_h
\Pi_{\la,n}\bigr)\bigr\|_{L^{2r}_t(L^{p_1})},
 \end{split}
 \eeno
 for indices $\al_1, \beta_2,\gamma_2$ given by \eqref{indexad}.

As a consequence, we obtain, by using \eqref{2.2aq}, that
 \beq\label{3.10}
\begin{split} \mu^{\f{d}{2p_2}}&\bigl(\|t^{\gamma_1}u_{\la,n}^h\|_{L^{\infty}_t(L^{p_3})} +\|t^{\gamma_2}u_{\la,n}^h\|_{L^{2r}_t(L^{p_3})}\bigr)
 + \mu^{\f12+\f{d}{2p_3}}\bigl(\|t^{\beta_1}\na u_{\la,n}^h\|_{L^{2r}_t(L^{p_2})}\\
 &+\|t^{\beta_2}\na u_{\la,n}^h\|_{L^{\infty}_t(L^{p_2})}\bigr)
 +\mu\|t^{\alpha_1} \D u_{\la,n}^h\|_{L^{2r}_t(L^{p_1})}\\
\leq &
\mu^{-\f12(1-\f{d}{p_1})}\|u_0^h\|_{\dot{B}^{-1+\f{d}{p_1}}_{p_1,2r}}+C\Bigl\{
\mu\|a_0\|_{L^\infty}\bigl(\|t^{\alpha_1} \D u_{\la,n}^h\|_{L^{2r}_t(L^{p_1})}\\
&+\|t^{\alpha_1} \D
u_{\la,n}^d\|_{L^{2r}_t(L^{p_1})}\bigr)+\|t^{\gamma_1}u_{n}^h\|_{L^{\infty}_t(L^{p_3})}\|t^{\beta_1}\na
u_{\la,n}^h\|_{L^{2r}_t(L^{p_2})}\\
&+\Bigl(\int_{0}^te^{-
2r\int_s^t(\la_1f_{1,n}(t')+\la_2f_{2,n}(t'))\,dt'}\bigl(\|s^{\gamma_2}u_{n}^d(s)\|_{L^{p_3}}^{2r}\|s^{\beta_2}\na
u_{\la,n}^h(s)\|_{L^{p_2}}^{2r}\\
&\qquad+\|s^{\gamma_1}u_{\la,n}^h\|_{L^{p_3}}^{2r}\|s^{\beta_1}\na
u_{n}^d(s)\|_{L^{p_2}}^{2r}\bigr)\,ds\Bigr)^{\f1{2r}}\Bigr\}.
\end{split}\eeq  Then as
$C\|a_0\|_{L^\infty}\leq \f12,$ and \beno
\begin{split} &\|s^{\alpha_1}\D u_{\la,n}^d\|_{L^{2r}_t(L^{p_1})}\leq\f1{(2\la_3 r)^{\f1{2r}}},\\
&\Bigl(\int_{0}^te^{-2\la_1
r\int_s^tf_{1,n}(t')\,dt'}\|s^{\gamma_1}u_{\la,n}^h\|_{L^{p_3}}^{2r}\|t^{\beta_1}\na
u_{n}^d(s)\|_{L^{p_2}}^{2r} \,ds\Bigr)^{\f1{2r}}\leq \f1{(2\la_1
r)^{\f1{2r}}}\|s^{\gamma_1} u_{\la,n}^h\|_{L^{\infty}_t(L^{p_3})},\\
&\Bigl(\int_{0}^te^{-2\la_2 r\int_s^tf_{2,n}(t')\,dt'}
 \|s^{\gamma_2}u_{n}^d(s)\|_{L^{p_3}}^{2r}\|t^{\beta_2}\na
u_{\la,n}^h(s)\|_{L^{p_2}}^{2r}\,ds\Bigr)^{\f1{2r}}\leq \f1{(2\la_2
r)^{\f1{2r}}} \|s^{\beta_2}\na
u_{\la,n}^h\|_{L^{\infty}_t(L^{p_2})},
\end{split}
\eeno  we infer from \eqref{3.10} that \beno \begin{split}
\mu^{\f{d}{2p_2}}&\bigl(\|t^{\gamma_1}u_{\la,n}^h\|_{L^{\infty}_t(L^{p_3})}
+\|t^{\gamma_2}u_{\la,n}^h\|_{L^{2r}_t(L^{p_3})}\bigr)
+ \mu^{\f12+\f{d}{2p_3}}\bigl(\|t^{\beta_2}\na u_{\la,n}^h\|_{L^{\infty}_t(L^{p_2})} \\
 &+\|t^{\beta_1}\na u_{\la,n}^h\|_{L^{2r}_t(L^{p_2})}\bigr)
 +\mu\|t^{\alpha_1} \D u_{\la,n}^h\|_{L^{2r}_t(L^{p_1})}\\
\leq &
\mu^{-\f12(1-\f{d}{p_1})}\|u_0\|_{\dot{B}^{-1+\f{d}{p_1}}_{p_1,r}}+C\Bigl\{\f{\mu}{(2\la_3
r)^{\f1{2r}}}\|a_0\|_{L^\infty}+\|t^{\gamma_1}u_{n}^h\|_{L^{\infty}_t(L^{p_3})}\|t^{\beta_1}\na
u_{\la,n}^h\|_{L^{2r}_t(L^{p_2})}\\
&+\f1{(2\la_1 r)^{\f1{2r}}}\|t^{\gamma_1}
u_{\la,n}^h\|_{L^{\infty}_t(L^{p_3})}   +\f1{(2\la_2 r)^{\f1{2r}}}
\|t^{\beta_2}\na u_{\la,n}^h\|_{L^{\infty}_t(L^{p_2})}\Bigr\}.
\end{split}\eeno
Taking $\la_1=\f{(4C)^{2r}}{2r\mu^{\f{d}{p_2}r}}$,
$\la_2=\f{(4C)^{2r}}{2r\mu^{(1+\f{d}{p_3})r}}$ and
$\la_3=\f{C^{2r}}{2r\mu^{(\f{d}{p_1}-1)r}}$ in the above inequality
results in \beq\label{3.11} \begin{split}
\mu^{\f{d}{2p_2}}&\bigl(\f34\|t^{\gamma_1}u_{\la,n}^h\|_{L^{\infty}_t(L^{p_3})}
+\|t^{\gamma_2}u_{\la,n}^h\|_{L^{2r}_t(L^{p_3})}\bigr)
 + \mu^{\f12+\f{d}{2p_3}}\bigl(\|t^{\beta_1}\na u_{\la,n}^h\|_{L^{2r}_t(L^{p_2})}\\
 &+\f34\|t^{\beta_2}\na u_{\la,n}^h\|_{L^{\infty}_t(L^{p_2})}\bigr)+\mu\|t^{\alpha_1} \D u_{\la,n}^h\|_{L^{2r}_t(L^{p_1})}\\
\leq & C
\mu^{\f12(\f{d}{p_1}-1)}\bigl(\|u_0^h\|_{\dot{B}^{-1+\f{d}p}_{p,r}}+\mu\|a_0\|_{L^\infty}\bigr)+C\|t^{\gamma_1}u_{n}^h\|_{L^{\infty}_t(L^{p_3})}\|t^{\beta_1}\na
u_{\la,n}^h\|_{L^{2r}_t(L^{p_2})}.
\end{split}\eeq
Let  $c_1$ be a small enough positive constant, which will be
determined later on, we denote \beq \label{3.12}
\begin{split}\bar{T}_n\eqdefa \sup\Bigl\{\ T\leq T_n^\ast;\quad
&\mu^{\f12-\f{d}{2p_3}}\|t^{\gamma_1}
u_{n}^h\|_{L^{\infty}_T(L^{p_3})}
+\mu^{1-\f{d}{2p_2}}\|t^{\beta_1}\na
u_{n}^h\|_{L^{2r}_T(L^{p_2})}\\
&+\mu^{\f3{2}(1-\f{d}{p_1})}\|t^\alpha\D
u_{n}^h\|_{L^{2r}_T(L^{p_1})}\leq c_1\mu\ \Bigr\}. \end{split}\eeq
We shall prove that $\bar{T}_n=T_n^\ast=\infty$ as long as we take
$C_r$ sufficiently large and $c_0$ sufficiently small in
\eqref{small1}. In fact, if $\bar{T}_n<T_n^\ast,$ it follows from
\eqref{3.8} and \eqref{3.11} that \beq\label{3.13}
\begin{split} \mu^{\f12(1-\f{d}{p_3})}&\bigl(\|t^{\gamma_1}u_{n}^h\|_{L^{\infty}_t(L^{p_3})} +
\|t^{\gamma_2}u_{n}^h\|_{L^{2r}_t(L^{p_3})}\bigr)+
\mu^{1-\f{d}{2p_2}}\bigl(\|t^{\beta_1}\na
u_{n}^h\|_{L^{2r}_t(L^{p_2})}\\
&+
 \|t^{\beta_2}\na u_{n}^h\|_{L^{\infty}_t(L^{p_2})}\bigr)+\mu^{\f12(3-\f{d}{p_1})}\|t^{\alpha_1} \D u_{n}^h\|_{L^{2r}_t(L^{p_1})}\\
\leq & C
\bigl(\|u_0^h\|_{\dot{B}^{-1+\f{d}p}_{p,r}}+\mu\|a_0\|_{L^\infty}\bigr)\exp\Bigl\{-C_r\int_0^t\bigl(
\mu^{-\f{d}{p_2}r}\|s^{\beta_1}\na
u^d_n(s)\|_{L^{p_2}}^{2r}\\
&\qquad\qquad\qquad+\mu^{-(1+\f{d}{p_3})r}\|s^{\gamma_2}u^d_n(s)\|_{L^{p_3}}^{2r}+\mu^{(1-\f{d}{p_1})r}\|s^\alpha\D
u^d_n(s)\|_{L^{p_1}}^{2r}\bigr)\,ds\Bigr\}.
\end{split}
\eeq

On the other hand, it follows from a similar derivation of
\eqref{3.10} that \beq\label{3.16}
\begin{split}\mu^{\f{d}{2p_2}}&\bigl(\|t^{\gamma_1}u_n^{d}\|_{L^{\infty}_t(L^{p_3})} +\|t^{\gamma_2}u_n^{d}\|_{L^{2r}_t(L^{p_3})}\bigr)
 + \mu^{\f12+\f{d}{2p_3}}\bigl(\|t^{\beta_1}\na u_n^{d}\|_{L^{2r}_t(L^{p_2})}\\
 &+\|t^{\beta_2}\na u_n^{d}\|_{L^{\infty}_t(L^{p_2})}\bigr)+\mu\|t^{\alpha_1} \D u_n^{d}\|_{L^{2r}_t(L^{p_1})}\\
\leq & \mu^{-\f12(1-\f{d}{p_1})}\|
u_{0}^d\|_{\dot{B}^{-1+\f{d}p}_{p,r}}+C\Bigl\{
\mu\|a_0\|_{L^\infty}\bigl(\|t^{\alpha_1}\D
u_{n}^h\|_{L^{2r}_t(L^{p_1})}+\|t^{\alpha_1}\D
u_n^{d}\|_{L^{2r}_t(L^{p_1})}\bigr)\\
&+\bigl(\|t^{\gamma_1}u_{n}^h\|_{L^\infty_t(L^{p_3})}+\|t^{\gamma_1}u_n^{d}\|_{L^\infty_t(L^{p_3})}\bigr)\|t^{\beta_1}\na
u_{n}^h\|_{L^{2r}_t(L^{p_2})}\\
&+\|t^{\gamma_1}u_{n}^h\|_{L^\infty_t(L^{p_3})}\|t^{\beta_1}\na
u_n^{d}\|_{L^{2r}_t(L^{p_2})}\Bigr\}\quad\mbox{for}\quad t<T_n^\ast,
\end{split}\eeq
from which, \eqref{3.12} and taking $c_0$ small enough in
\eqref{small1} so that $C\|a_0\|_{L^\infty}\leq\f12,$ we deduce
\beno
\begin{split} \mu^{\f{d}{2p_2}}&\bigl(\|t^{\gamma_1}u_n^{d}\|_{L^{\infty}_t(L^{p_3})} +\|t^{\gamma_2}u_n^{d}\|_{L^{2r}_t(L^{p_3})}\bigr)
 + \mu^{\f12+\f{d}{2p_3}}\bigl(\|t^{\beta_1}\na u_n^{d}\|_{L^{2r}_t(L^{p_2})}\\
 &+\|t^{\beta_2}\na u_n^{d}\|_{L^{\infty}_t(L^{p_2})}\bigr)+\mu\|t^{\alpha_1} \D u_n^{d}\|_{L^{2r}_t(L^{p_1})}\\
&\leq 2C\mu^{-\f12(1-\f{d}{p_1})}\|
u_{0}^d\|_{\dot{B}^{-1+\f{d}p}_{p,r}}
+2Cc_1\mu^{\f12(1+\f{d}{p_1})}(1+\|a_0\|_{L^\infty})\quad\mbox{for}\quad
t\leq \bar{T}_n.
\end{split}\eeno
which implies \beq\label{3.14}
\begin{split}
 \mu^{\f12(1-\f{d}{p_3})}&\bigl(\|t^{\gamma_1}u_n^{d}\|_{L^{\infty}_t(L^{p_3})} +\|t^{\gamma_2}u_n^{d}\|_{L^{2r}_t(L^{p_3})}\bigr)
 + \mu^{1-\f{d}{2p_2}}\bigl(\|t^{\beta_1}\na u_n^{d}\|_{L^{2r}_t(L^{p_2})}\\
 &+\|t^{\beta_2}\na u_n^{d}\|_{L^{\infty}_t(L^{p_2})}\bigr)+\mu^{\f12(3-\f{d}{p_1})}\|t^{\alpha_1} \D u_n^{d}\|_{L^{2r}_t(L^{p_1})}\\
&\leq 2C\| u_{0}^d\|_{\dot{B}^{-1+\f{d}p}_{p,r}} +c\mu
\quad\mbox{for}\quad t\leq\bar{T}_n.
\end{split}\eeq
Substituting \eqref{3.14} into \eqref{3.13} leads to
\beq\label{3.15}
\begin{split} \mu^{\f12(1-\f{d}{p_3})}&\bigl(\|t^{\gamma_1}u_{n}^h\|_{L^{\infty}_t(L^{p_3})} +
\|t^{\gamma_2}u_{n}^h\|_{L^{2r}_t(L^{p_3})}\bigr)+
\mu^{1-\f{d}{2p_2}}\bigl(\|t^{\beta_1}\na
u_{n}^h\|_{L^{2r}_t(L^{p_2})}\\
&+
 \|t^{\beta_2}\na u_{n}^h\|_{L^{\infty}_t(L^{p_2})}\bigr)+\mu^{\f12(3-\f{d}{p_1})}\|t^{\alpha_1} \D u_{n}^h\|_{L^{2r}_t(L^{p_1})}\\
\leq&
C\bigl(\|u_0^h\|_{\dot{B}^{-1+\f{d}p}_{p,r}}+\mu\|a_0\|_{L^\infty}\bigr)\exp\Bigl\{C_r
\mu^{-2r}\|u_0^d\|_{\dot{B}^{-1+\f
dp}_{p,r}}^{2r}\Bigr\}\leq\f{c_1}2\mu\quad\mbox{for}\quad
t\leq\bar{T}_n,
\end{split}
\eeq as long as $C_r$ is sufficiently large and $c_0$  small enough
in \eqref{small1}. This contradicts with \eqref{3.12} and it in turn
shows that $\bar{T}_n=T_n^\ast.$ Then thanks to \eqref{3.14},
\eqref{3.15} and Theorem 1.3 in \cite{Kim}, we conclude that
$T_n^\ast=\infty$ and there holds \eqref{3.7}. It remains to prove
\eqref{3.17}. Indeed, similar to \eqref{3.16}, we get, by applying
Lemma \ref{lem3}, that \beno
\begin{split}
\mu\|t^{\alpha_2} \D u_{n}\|_{L^{r}(\R^+;L^{p_1})} \leq &
\mu^{-\f12(1-\f{d}{p_1})}\|
u_{0}\|_{\dot{B}^{-1+\f{d}p}_{p,r}}+C\Bigl\{
\mu\|a_0\|_{L^\infty}\|t^{\alpha_2}\D
u_{n}\|_{L^{r}(\R^+;L^{p_1})}\\
&+\bigl(\|t^{\gamma_2}u_{n}^h\|_{L^{2r}(\R^+;L^{p_3})}+\|t^{\gamma_2}u_n^{d}\|_{L^{2r}(\R^+;L^{p_3})}\bigr)\|t^{\beta_1}\na
u_{n}^h\|_{L^{2r}(\R^+; L^{p_2})}\\
&+\|t^{\gamma_2}u_{n}^h\|_{L^{2r}(\R^+; L^{p_3})}\|t^{\beta_1}\na
u_n^{d}\|_{L^{2r}_t(L^{p_2})}\Bigr\},
\end{split}
\eeno which along with \eqref{3.7} and  the fact:
$C\|a_0\|_{L^\infty}\leq \f12,$ gives rise to the first inequality
of \eqref{3.17}. Along the same line, one gets the estimate of
$\|t^{\alpha_2} \na \Pi_{n}\|_{L^{r}(\R^+;L^{p_1})}$ in
\eqref{3.17}.  Whereas it follows from  and \eqref{pest1} that \beno
\begin{split}
\|t^{\alpha_1}& \na \Pi_{n}\|_{L^{2r}(\R^+;L^{p_1})}\leq  C\Bigl\{
\mu\|a_0\|_{L^\infty}\|t^{\alpha_1}\D
u_{n}\|_{L^{2r}(\R^+;L^{p_1})}\\
&+\|t^{\gamma_1}u_{n}^h\|_{L^{\infty}(\R^+;L^{p_3})}\|t^{\beta_1}\na
u_{n}\|_{L^{2r}(\R^+;
L^{p_2})}+\|t^{\gamma_2}u_n^{d}\|_{L^{2r}(\R^+;L^{p_3})}\|t^{\beta_2}\na
u_{n}^h\|_{L^{\infty}(\R^+; L^{p_2})}\Bigr\},
\end{split}
\eeno from which and \eqref{3.7}, we obtain the estimate of
$\|t^{\alpha_1} \na \Pi_{n}\|_{L^{2r}(\R^+;L^{p_1})}$ in
\eqref{3.17}.
 This
completes the proof of the proposition.
\end{proof}

Now we are in a position to complete the proof  to the existence
part of Theorem
\ref{thm1.1} for the remaining case.\\

\no{\bf Proof to the existence part of Theorem \ref{thm1.1} for
$p\in (\f{dr}{3r-2}, d).$}\ Notice that $p_1<d$ ensures $\al_2
r'<1,$ so that for any $T>0,$ we deduce from \eqref{3.17} that $\D
u_n=t^{-\al_2}(t^{\al_2}\D u_n)$ is uniformly bounded in
$L^{\tau_1}((0,T); L^{p_1}(\R^d))$ for some $\tau_1\in (1,\infty).$
Similarly we infer from  \eqref{indexad} and \eqref{3.7} that $\{\na
u_n\}$ is uniformly bounded in $L^{\tau_2}((0,T); L^{p_2}(\R^d))$
for any $\tau_2<\f{2p_2}{2p_2-d},$ and $\{u_n\}$ is uniformly
bounded in $L^{\tau_3}((0,T); L^{p_3}(\R^d))$ for any
$\tau_3<\f{2p_3}{p_3-d}.$ Moreover as
$\f{2p_2-d}{2p_2}+\f{p_3-d}{2p_3}=\f32-\f{d}{2p_1}$ and $p_1<d,$ we
can choose $\tau_2$ and $\tau_3$ so that
$\f1{\tau_2}+\f1{\tau_3}=\f1{\tau_4}<1.$ This together with
\eqref{schema} implies that $\{\p_tu_n\}$ is uniformly bounded in
$L^{\tau_1}((0,T); L^{p_1}(\R^d))+L^{\tau_4}((0,T); L^{p_1}(\R^d))$
for any $T<\infty,$
 from which, Ascoli-Arzela
Theorem and $p_2<\f{dp_1}{d-p_1},$  we conclude that there exists a
subsequence of $\{a_n,u_n,\na\Pi_n\},$ which we still denote by
$\{a_n,u_n,\na\Pi_n\}$ and some $(a,u,\na\Pi)$ with $a\in
L^\infty(\R^+\times\R^d),$ $u\in L^{\tau_3}_{loc}(\R^+;
L^{p_3}(\R^d))$ with $\na u\in L^{\tau_2}_{loc}(\R^+;
L^{p_2}(\R^d))$ and $\D u,\na\Pi\in L^{\tau_1}_{loc}(\R^+;
L^{p_1}(\R^d)),$ such that \beq \label{3.25}
\begin{split}
&a_n \rightharpoonup  a\quad \mbox{weak}\ \ast \ \mbox{in}\
L^\infty_{loc}(\R^+\times\R^d),\\
&  u_n \rightarrow u \quad \mbox{strongly}\ \ \mbox{in}\
L^{\tau_3}_{loc}(\R^+; L^{p_3}_{loc}(\R^d)),\\
&  \na u_n \rightarrow \na u \quad \mbox{strongly}\  \mbox{in}\ \
L^{\tau_2}_{loc}(\R^+; L^{p_2}_{loc}(\R^d)),\\
& \D u_n \rightharpoonup \D u\quad\mbox{and}\quad \na\Pi_n
\rightharpoonup \na\Pi \quad \mbox{weakly}\ \ \mbox{in}\
L^{\tau_1}_{loc}(\R^+; L^{p_1}(\R^d)).
\end{split}
\eeq With \eqref{3.7}, \eqref{3.17} and \eqref{3.25}, we can repeat
the argument at the end of Section \ref{sect2} to complete the proof
to the existence part of Theorem \ref{thm1.1} for the case when
$\f{dr}{3r-2}<p<d.$ \ef

\setcounter{equation}{0}
\section{The uniqueness part of Theorem \ref{thm1.1}}


With a little bit more regularity on the initial velocity, namely,
$u_0\in \dot{B}^{-1+\f{d}{p}+\ep}_{p,r}(\R^d)$ for some small enough
$\ep>0$, we can prove the uniqueness of the solution constructed in
the last two sections. This result is strongly inspired by the
Lagrangian approach in \cite{dm, dm2}. Nevertheless, with an almost
critical regularity for the initial velocity field, the proof here
is more challenging. The main result is listed as follows:

\begin{thm}\label{thm1.1bis}
{\sl Let $r\in(1,\infty),$ $p\in (1,d)$ and $0<\ep<\min\{ \f1r,
1-\f1r, \f{d}p-1\}$. Let $a_0\in L^\infty(\R^d)$ and $ u_0\in
\dot{B}^{-1+\f{d}p}_{p,r}(\R^d)\cap
\dot{B}^{-1+\f{d}p+\ep}_{p,r}(\R^d),$ which satisfies the nonlinear
smallness condition \eqref{small1}. Then \eqref{INS} has a unique
global weak solution $(a,u)$ which satisfies
(\ref{thm1a}-\ref{thm1c}) and
 \beq\label{thm1cg}
\begin{split} \mu^{\f12(3-\f{d}{q_1}-\ep)}&\|t^{\alpha_1^{\ep}} \D u\|_{L^{2r}(\R^+; L^{q_1})}
+ \mu^{1-\f{d}{2q_2}-\f\ep2}\bigl(\|t^{\beta_1^{\ep}}\na
u\|_{L^{2r}(\R^+; L^{q_2})}+
 \|t^{\beta_2^{\ep}}\na u\|_{L^{\infty}(\R^+; L^{q_2})}\bigr)\\
&+\mu^{\f12(1-\f{d}{q_3}-\ep)}\bigl(\|t^{\gamma_1^{\ep}}u\|_{L^{\infty}(\R^+;L^{q_3})}
+
\|t^{\gamma_2^{\ep}}u\|_{L^{2r}(\R^+;L^{q_3})}\bigr)\\
 &\qquad\leq C\|u_0\|_{\dot{B}^{-1+\f{d}p+\ep}_{p,r}}\exp\Bigl(C_r
\mu^{-2r}\|u_0^d\|_{\dot{B}^{-1+\f{d}p}_{p,r}}^{2r}\Bigr),
\end{split}
\eeq and  \beq\label{thm1d}
\begin{split}
&\mu^{\f12(3-\f{d}{q_1}-\ep)}\|t^{\alpha_2^\ep} \D u\|_{L^{r}(\R^+;
L^{q_1})}\leq C\|u_0\|_{\dot{B}^{-1+\f{d}p+\ep}_{p,r}}\exp\Bigl(C_r
\mu^{-2r}\|u_0^d\|_{\dot{B}^{-1+\f{d}p}_{p,r}}^{2r}\Bigr),\\
&\mu^{\f12(3-\f{d}{q_1}-\ep)}\bigl(\|t^{\alpha_1^\ep}
\na\Pi\|_{L^{2r}(\R^+; L^{q_1})}+\|t^{\alpha_2^\ep}
\na\Pi\|_{L^{r}(\R^+; L^{q_1})}\bigr)\\
&\qquad\qquad\qquad\qquad\qquad\qquad\leq
C\eta\|u_0\|_{\dot{B}^{-1+\f{d}p+\ep}_{p,r}}\exp\Bigl(C_r
\mu^{-2r}\|u_0^d\|_{\dot{B}^{-1+\f{d}p}_{p,r}}^{2r}\Bigr),
\end{split} \eeq where $q_1, q_2, q_3$ satisfy $\max (p,
\f{dr}{2r-1})<q_1<\f{d}{1-\ep}$,
 and $\f{dr}{r-1}<q_3<\infty$ so that $\f1{q_2}+\f1{q_3}=\f1{q_1},$ the indices
  $\alpha_1^{\ep}$, $\alpha_2^{\ep},$ $\beta_1^{\ep}$,
$\beta_2^{\ep}$, $\gamma_1^{\ep}$, $\gamma_2^{\ep}$ are determined
by
  \beq\label{indexad1}\begin{split}
 & \alpha_1^{\ep}=\f12(3-\f{d}{q_1}-\ep)-\f1{2r},\quad
\beta_1^{\ep}=\f12(2-\f{d}{q_2}-\ep)-\f1{2r},\quad
\gamma_1^{\ep}=\f12(1-\f{d}{q_3}-\ep),\\
& \alpha_2^{\ep}=\f12(3-\f{d}{q_1}-\ep)-\f1{r},\quad \
\beta_2^{\ep}=\f12(2-\f{d}{q_2}-\ep),\qquad\qquad
\gamma_2^{\ep}=\f12(1-\f{d}{q_3}-\ep)-\f1{2r}. \end{split} \eeq  }
\end{thm}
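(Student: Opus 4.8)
The plan is to prove the existence and estimates \eqref{thm1cg}--\eqref{thm1d} first, and then to handle the uniqueness separately via the Lagrangian reformulation. For the existence of a solution satisfying the extra estimates, I would rerun the fixed-point / approximation scheme \eqref{schema} exactly as in the proof of Proposition \ref{uniformbound1}, but now also propagating the higher-regularity norm coming from $u_0\in\dot B^{-1+\f dp+\ep}_{p,r}$. Concretely, with $\ep>0$ small and $q_1,q_2,q_3$ as in the statement, Proposition \ref{prop1.1} gives that $t^{\wt\alpha^\ep}\D e^{\mu t\D}u_0$, $t^{\wt\beta_1^\ep}\na e^{\mu t\D}u_0$, $t^{\wt\gamma_1^\ep}e^{\mu t\D}u_0$ lie in the appropriate $L^{2r}_t(L^{q_i})$ (resp. $L^\infty_t$) spaces, with norm controlled by $\mu^{-\,\cdot}\|u_0\|_{\dot B^{-1+\f dp+\ep}_{p,r}}$. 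One then estimates the Duhamel term in \eqref{rmk1.0a} using Lemma \ref{lem3}, Lemma \ref{lem4} and Lemma \ref{lem5} with the \emph{same} weight exponents $\wt\alpha^\ep,\wt\beta_1^\ep,\wt\beta_2^\ep,\wt\gamma_1^\ep,\wt\gamma_2^\ep$ (the hypothesis $0<\ep<\min\{\f1r,1-\f1r,\f dp-1\}$ is exactly what makes all the Beta-function integrals in those lemmas converge, since it keeps $q_1<\f d{1-\ep}$ and keeps the time-weight exponents below the critical thresholds). The trilinear term $u\cdot\na u$, the term $\mu a\D u$ (using $\|a_n\|_{L^\infty}\le\|a_0\|_{L^\infty}$), and the pressure term $(1+a)\na\Pi$ (estimated by a weighted analogue of \eqref{pest1}) are all handled as in Section 3; the only difference is bookkeeping of the $\ep$-shifted exponents. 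Crucially, the products in the nonlinearity mix the already-controlled unweighted norms \eqref{thm1b}--\eqref{thm1c} with one factor carrying the $\ep$-weight, so the $\ep$-norm closes \emph{linearly}: one gets $\|u\|_{X^\ep}\le C\|u_0\|_{\dot B^{-1+\f dp+\ep}_{p,r}}+C(\text{small})\|u\|_{X^\ep}$, whence \eqref{thm1cg}. Then \eqref{thm1d} for $\D u$ at integrability $r$ (rather than $2r$) and for $\na\Pi$ follows by the same splitting as in the derivation of \eqref{3.17}, now with the $\ep$-weights and using Lemma \ref{lem3} with $p=r$.

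The payoff of \eqref{thm1cg} is the following: interpolating the estimate $t^{\alpha_1}\D u\in L^{2r}(L^{p_1})$ with $p_1<d$ from \eqref{thm1b} and the estimate $t^{\alpha_1^\ep}\D u\in L^{2r}(L^{q_1})$ with $q_1<\f d{1-\ep}$, so $q_1$ can be taken $>d$, from \eqref{thm1cg}, one obtains $\D u\in L^1_{loc}(\R^+;L^{p_1})\cap L^1_{loc}(\R^+;L^{q_1})$ with $p_1<d<q_1$; by the usual embedding $L^{p_1}\cap L^{q_1}\hookrightarrow L^\infty$ (applied to $\na u$ via $\|\na u\|_{L^\infty}\lesssim\|\na u\|_{L^{p_2}}^{\theta}\|\D u\|_{L^{q_1}}^{1-\theta}$-type estimates, or more simply via the Gagliardo--Nirenberg/Sobolev chain $\|\na u\|_{L^\infty}\lesssim\|\na u\|_{L^{p_2}}+\|\D u\|_{L^{q_1}}$ when $q_1>d$), this yields $\na u\in L^1_{loc}(\R^+;L^\infty(\R^d))$, i.e. $u\in L^1_{loc}(\R^+;\mathrm{Lip}(\R^d))$. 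Once the velocity is log-Lipschitz-beating (genuinely $L^1_t\mathrm{Lip}$), the flow map $X(t,y)$ of $u$ is well defined, bi-Lipschitz for each $t$, and one may reformulate \eqref{INS} in Lagrangian coordinates: the density becomes $a(t,X(t,y))=a_0(y)$ (so $a$ is simply transported, no regularity issue), and the velocity/pressure satisfy a perturbed Stokes system in $y$ with coefficients depending on $\na_y X$, which stays close to the identity on a short time interval. This is the setting of \cite{dm,dm2}.

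For uniqueness, suppose $(a_1,u_1,\na\Pi_1)$ and $(a_2,u_2,\na\Pi_2)$ are two solutions with the stated regularity. Passing both to Lagrangian variables (the same Lagrangian label $y$ for both would not work since the flows differ; instead one compares them using the flow of $u_1$, as in \cite{dm2}), one writes the equation for the difference $\delta u=u_1-u_2$ (in Eulerian or Lagrangian form). Because $a_1=a_2=a_0\circ X_1^{-1}$ only if the flows coincide, the genuine comparison is done in Lagrangian coordinates attached to $u_1$: then the two densities coincide identically, and one is left with a linear-in-$\delta u$ parabolic system whose source terms are (i) commutators between the two flows acting on $\D u_2,\na\Pi_2$, and (ii) the difference of the pressure terms. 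The key estimates are maximal-regularity estimates of the type in Lemma \ref{lem1}/Lemma \ref{lem3} applied to this difference system, combined with the smallness of $\na_y X_i-\mathrm{Id}$ on a short interval $[0,T_1]$ and the integrability $\D u_2\in L^1_{loc}(L^{q_1})$ with $q_1>d$ (this is where the extra $\ep$-regularity is indispensable: without $q_1>d$ the commutator/source terms cannot be absorbed). A Gronwall argument on $[0,T_1]$ gives $\delta u\equiv 0$ there, hence $a_1=a_2$, $\na\Pi_1=\na\Pi_2$ on $[0,T_1]$; a continuation argument (the solution stays in the same class and $T_1$ can be chosen uniformly as long as the norms are controlled, which they are globally by \eqref{thm1a}--\eqref{thm1d}) then propagates uniqueness to all of $\R^+$.

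\textbf{Main obstacle.} The hard part is the uniqueness step: carrying out the Lagrangian change of variables when the velocity is only \emph{almost} Lipschitz (weight $t^{\gamma}$ near $t=0$, and merely $L^1_t\mathrm{Lip}$, not $L^\infty_t\mathrm{Lip}$) forces one to track the time weights through every composition, every commutator, and every maximal-regularity estimate; the flow estimates $\|\na X_i(t)-\mathrm{Id}\|_{L^\infty}\lesssim\int_0^t\|\na u_i\|_{L^\infty}$ are fine, but controlling $\na^2 X_i$ (needed to commute $\D$ with the change of variables) requires $\D u_i\in L^1_{loc}(L^{q_1})$ with $q_1>d$ together with weighted bounds, and the bookkeeping of which negative power of $\mu$ and which power of $t$ appears where is genuinely delicate — this is precisely the point flagged in Remark \ref{rmk1.2} that "the proof here will be much more complicated" than in \cite{dm2}. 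Everything else (the existence and the estimates \eqref{thm1cg}--\eqref{thm1d}) is a routine, if lengthy, repetition of Sections 2--3 with $\ep$-shifted exponents.
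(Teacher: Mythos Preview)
Your existence and estimate plan for \eqref{thm1cg}--\eqref{thm1d} is correct and is exactly what the paper does in Proposition \ref{uniformbound2}: the key identity is $\alpha_1^\ep=\bar\beta_1+\gamma_1^\ep=\beta_2^\ep+\bar\gamma_2$ with $\bar\beta_1,\bar\gamma_2$ the $\ep=0$ weights at the $q_i$, so each product in the nonlinearity carries one factor bounded small by the already-established \eqref{3.7} and one factor in the $\ep$-norm being closed, which is precisely your ``closes linearly'' observation. The $L^1_{loc}\,\mathrm{Lip}$ conclusion you sketch is the content of the paper's Lemma \ref{lem4.1}.

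Your description of the Lagrangian uniqueness step, however, contains a real error. You propose to compare in ``Lagrangian coordinates attached to $u_1$'' and then assert that ``the two densities coincide identically.'' These two statements are incompatible: pulling back \emph{both} solutions by the single flow $X_1$ gives $a_1(t,X_1(t,y))=a_0(y)$, but $a_2(t,X_1(t,y))$ has no reason to equal $a_0(y)$, since $X_1$ does not transport $a_2$. You would then be stuck with an $L^\infty$-only density difference in the momentum equation, which is exactly what the Lagrangian method is meant to eliminate. The correct device, used in the paper following \cite{dm,dm2}, is to pull back \emph{each} solution by its \emph{own} flow $X_i$; then $b_i(t,y)=a_i(t,X_i(t,y))=a_0(y)$ for both $i$, so $\delta b\equiv 0$ for free. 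One compares $v_i=u_i\circ X_i$ on the common label space $\R^d_y$, and the discrepancy of the flows appears only through $\delta A=A_2-A_1$ in the source terms $\delta f_1,\delta f_2,\delta g,\delta R$ of the difference system \eqref{differ}. These are controlled, via \eqref{4.5} and the weighted bounds of Lemma \ref{lem4.1}, by $\int_0^t\|\na\delta v\|_{L^{d/\ep}}$ times quantities vanishing as $t\to 0$, yielding $G(t)\lesssim\eta(t)G(t)$ with $\eta(t)\to 0$; this gives $G\equiv 0$ on a short interval directly, and a bootstrap finishes.
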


\begin{rmk}\label{rmk3.1}
To prove the uniqueness part of Theorem \ref{thm1.1bis}, we shall
choose $p_1=\f{d}{1+\ep}$ in \eqref{indexad}. This choice of $p_1$
satisfies $\max (p, \f{dr}{2r-1})<p_1<d$ with $0<\e <\min (1-\f1r,
\f{d}p-1)$.  Then by virtue of \eqref{thm1b}, $\D
u=t^{-\al_2}(t^{\al_2}\D u)\in L^{\tau_1}((0,T); L^{p_1}(\R^d))$ for
any  $T<\infty$ and $\tau_1$ satisfying $\al_2 < \f1{\tau_1}-\f1r$,
which implies $\tau_1<\f2{2-\e}$, we  thus take $\tau_1=\f8{8-\ep}$.
While as $q_1<\f{d}{1-\ep}$ in Theorem \ref{thm1.1bis}, we can
choose $q_1>d$ in order to get $\na u \in L^1_{loc}(L^\infty)$ (see
Lemma \ref{lem4.1} below), and this is the reason we need an
additional regularity on $u_0$ for the uniqueness part of Theorem
\ref{thm1.1}. We shall choose $q_1=\f{2d}{2-\ep}<\f{d}{1-\ep}$ in
\eqref{indexad1} later on. Then in order that $\D u\in
L^{\tau_1^\e}((0,T); L^{p_1}(\R^d)),$ we have $\al_2^{\e} <
\f1{\tau_1^{\ep}}-\f1r$, and hence we  take
$\tau_1^{\e}=\f8{8-\e}<\f4{4-\e}.$
\end{rmk}

It follows from the existence proof of Theorem \ref{thm1.1} that: in
order to prove the solution constructed in the last two sections
satisfies  \eqref{thm1cg} and \eqref{thm1d}, we only need to prove
that the same inequalities hold for the approximate solutions $
(a_n, u_n, \na\Pi_n)$ determined by \eqref{schema}.

We now turn to the uniform estimate of $(u_n, \na\Pi_n)_{n\in\N}$
when the initial velocity $u_0\in
\dot{B}^{-3+\f{d}{p}+\ep}_{p,r}(\R^d)$ for some $\ep\in (0,
\min\{\f1r,1-\f1r,\f{d}p-1\}).$ Notice that for all $q_1\geq p$ and
$r_1\geq r$, $\D u_0\in
\dot{B}^{-3+\f{d}{q_1}+\ep}_{q_1,r_1}(\R^d),$ then we deduce from
Proposition \ref{prop1.1} that
$t^{\f12(3-\f{d}{q_1}-\ep)-\f1{r_1}}\tri e^{t\tri}u_0\in
L^{r_1}(\R^+; L^{q_1}(\R^d)).$ Similarly, we choose
$q_2>\f{d}{2-\ep}$ and $q_3>\f{d}{1-\ep}$ with
$\f1{q_2}+\f1{q_3}=\f1{q_1}$, there holds
$t^{\f12(2-\f{d}{q_2}-\ep)-\f1{r_1}}\na e^{t\tri}u_0\in
L^{r_1}(\R^+; L^{q_2}(\R^d))$ and $t^{\f12(1-\f{d}{q_3}-\ep)}
e^{t\tri}u_0\in L^\infty(\R^+; L^{q_3}(\R^d)).$ We thus take
$r_1=2r>2$ and $q_1, q_2$ and $q_3$ satisfying $\max (p,
\f{dr}{2r-1})<q_1<\f{d}{1-\ep}$,
 $\f{dr}{r-1}<q_3<\infty,$ and $\f1{q_2}+\f1{q_3}=\f1{q_1}.$
 We shall investigate the uniform estimate to the solutions $(a_n,u_n, \na\Pi_n)$
of \eqref{schema} in the following functional space: \beq
\label{3.6}\begin{split} X=\Bigl\{\ u: \, \,&a\in
L^\infty(\R^+\times\R^d),\quad t^{\gamma_1^{\ep}} u\in
L^\infty(\R^+;
L^{q_3}(\R^d)), \\
&t^{\gamma_2^{\ep}} u\in L^{2r}(\R^+; L^{q_3}(\R^d)),\quad
t^{\beta_1^{\ep}}\na
u \in L^{2r}(\R^+;L^{q_2}(\R^d)),\, \, \\
 &t^{\beta_2^{\ep}}\na u \in
L^{\infty}(\R^+;L^{q_2}(\R^d))£¬\quad t^{\alpha_1^{\ep}}\tri u \in
L^{2r}(\R^+; L^{q_1}(\R^d))\,\Bigr\},\end{split}\eeq with the
indices $\alpha_1^{\ep}$, $\beta_1^{\ep}$, $\beta_2^{\ep}$,
$\gamma_1^{\ep}$, $\gamma_2^{\ep}$ being determined by
\eqref{indexad1}. Note that $\al_1^\ep \neq
\beta_i^\ep+\gamma_i^\ep$, but $\al_1^\ep=
\bar{\beta}_1+\gamma_1^\ep=\beta_2^\ep+\bar{\gamma}_2$
 with $\bar{\beta}_1= \f12(1-\f{d}{q_3})-\f1{2r}$ and $\bar{\gamma}_2=\f12(2-\f{d}{q_2})-\f1{2r}$. Then
 we can apply Proposition \ref{uniformbound1} to prove the following proposition concerning
the uniform time-weighted bounds of $(a_n,u_n, \na\Pi_n)$.

\begin{prop}\label{uniformbound2}
{\sl Under the assumptions of Theorem \ref{thm1.1bis},
\eqref{schema} has a unique global smooth solution
$(a_n,u_n,\na\Pi_n)$ which satisfies
 \beq\label{3.1}
\begin{split} \mu^{\f12(3-\f{d}{q_1}-\ep)}&\|t^{\alpha_1^{\ep}} \D u_{n}\|_{L^{2r}(\R^+; L^{q_1})}
+ \mu^{1-\f{d}{2q_2}-\f\ep2}\bigl(\|t^{\beta_1^{\ep}}\na
u_{n}\|_{L^{2r}(\R^+; L^{q_2})}+
 \|t^{\beta_2^{\ep}}\na u_{n}\|_{L^{\infty}(\R^+; L^{q_2})}\bigr)\\
&+
\mu^{\f12(1-\f{d}{q_3}-\ep)}\bigl(\|t^{\gamma_1^{\ep}}u_{n}\|_{L^{\infty}(\R^+;L^{q_3})}
+
\|t^{\gamma_2^{\ep}}u_{n}\|_{L^{2r}(\R^+;L^{q_3})}\bigr)\\
 &\qquad\leq C\|u_0\|_{\dot{B}^{-1+\f{d}p+\ep}_{p,r}}\exp\Bigl(C_r
\mu^{-2r}\|u_0^d\|_{\dot{B}^{-1+\f{d}p}_{p,r}}^{2r}\Bigr),
\end{split}\eeq  and  \beq\label{3.1adf} \begin{split}
&\mu^{\f12(3-\f{d}{q_1}-\ep)}\|t^{\alpha_2^\ep} \D
u_n\|_{L^{r}(\R^+; L^{q_1})}\leq
C\|u_0\|_{\dot{B}^{-1+\f{d}p+\ep}_{p,r}}\exp\Bigl(C_r
\mu^{-2r}\|u_0^d\|_{\dot{B}^{-1+\f{d}p}_{p,r}}^{2r}\Bigr),\\
&\mu^{\f12(3-\f{d}{q_1}-\ep)}\bigl(\|t^{\alpha_1^\ep}
\na\Pi_n\|_{L^{2r}(\R^+; L^{q_1})}+\|t^{\alpha_2^\ep}
\na\Pi_n\|_{L^{r}(\R^+; L^{q_1})}\bigr)\\
&\qquad\qquad\qquad\qquad\qquad\qquad\leq
C\eta\|u_0\|_{\dot{B}^{-1+\f{d}p+\ep}_{p,r}}\exp\Bigl(C_r
\mu^{-2r}\|u_0^d\|_{\dot{B}^{-1+\f{d}p}_{p,r}}^{2r}\Bigr),
\end{split} \eeq for
some constant $C$ and  $\al_i^\ep, \beta_i^\ep, \gamma_i^\ep,$
$i=1,2$, given by \eqref{indexad1}. }
\end{prop}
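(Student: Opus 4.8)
The plan is to mimic the proof of Proposition~\ref{uniformbound1} almost line by line, replacing the indices $\al_1,\beta_i,\gamma_i$ of \eqref{indexad} by the shifted ones $\al_1^\ep,\beta_1^\ep,\beta_2^\ep,\gamma_1^\ep,\gamma_2^\ep$ of \eqref{indexad1} and invoking Lemma~\ref{lem3}, Lemma~\ref{lem4} and Lemma~\ref{lem5} with the genuine $\ep>0$ and $r_1=2r$. I would again start from the local smooth solution $(a_n,u_n,\na\Pi_n)$ on $[0,T_n^\ast)$, use $\|a_n\|_{L^\infty}\le\|a_0\|_{L^\infty}$, and the Duhamel formula \eqref{pesta}. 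The linear part $u_{n,L}=e^{\mu t\D}S_{n+N}u_0$ is controlled by Proposition~\ref{prop1.1}: from $u_0\in\dot B^{-1+\f dp+\ep}_{p,r}$ one has, by embedding, $\D u_0\in\dot B^{-3+\f d{q_1}+\ep}_{q_1,2r}$, $\na u_0\in\dot B^{-2+\f d{q_2}+\ep}_{q_2,2r}$, $u_0\in\dot B^{-1+\f d{q_3}+\ep}_{q_3,\infty}$, whence all the $u_{n,L}$-norms occurring in \eqref{3.1} are $\lesssim\mu^{\,\cdots}\|u_0\|_{\dot B^{-1+\f dp+\ep}_{p,r}}$; the Duhamel integral is then estimated by the three lemmas, whose $\tau$-integrals over $(0,1)$ are finite precisely because $\max(p,\f{dr}{2r-1})<q_1<\f d{1-\ep}$, $\f{dr}{r-1}<q_3<\infty$ and $\f1{q_2}+\f1{q_3}=\f1{q_1}$. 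The pressure is handled from the elliptic identity used for \eqref{pest1}, measured in $\|t^{\al_1^\ep}\,\cdot\,\|_{L^{2r}_t(L^{q_1})}$ and $\|t^{\al_2^\ep}\,\cdot\,\|_{L^{r}_t(L^{q_1})}$, the self-interaction $C\|a_0\|_{L^\infty}\|\na\Pi_{\la,n}\|$ being absorbed once $c_0$ in \eqref{small1} is small.

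The only genuinely new point is the bilinear estimate: the shifted weights no longer add diagonally, since $\beta_i^\ep+\gamma_i^\ep=\al_1^\ep-\f\ep2$, so a splitting of $u_n\cdot\na u_n$ that keeps the $\ep$-regularity on both factors loses half a power of time weight. I would remedy this with the identities recorded just before the statement, $\al_1^\ep=\bar\beta_1+\gamma_1^\ep=\beta_2^\ep+\bar\gamma_2$ (and their $\al_2^\ep$-analogues): put the $\ep$-weight on one factor only and measure the companion factor in the corresponding \emph{un-shifted}, scaling-critical norm. Thus I would repeat the computation \eqref{3.9}--\eqref{3.10} with $u_{\la,n}=u_n\exp\{-\int_0^t(\la_1f_{1,n}+\la_2f_{2,n}+\la_3f_{3,n})\}$ using the \emph{same} non-shifted weight functions $f_{i,n}$ of \eqref{3.8}; every bilinear contribution carrying a vertical factor $u_n^d$ or $\na u_n^d$ is paired against the matching $f_{i,n}$, so that the exponential gains a factor $(\la_i r)^{-1/(2r)}$; the contributions built only from $u_n^h$ are absorbed by the non-shifted smallness $\mu^{\f12-\f d{2p_3}}\|t^{\gamma_1}u_n^h\|_{L^\infty_t(L^{p_3})}+\cdots\le c_1\mu$ from \eqref{3.7}; and the terms carrying a factor $a_n$ are absorbed via $C\|a_0\|_{L^\infty}\le\f12$ --- here there is no need to split $\D u_n$ into horizontal and vertical parts, since \eqref{3.1} is stated for the full $u_n$. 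Since $T_n^\ast=\infty$ is already known from Proposition~\ref{uniformbound1}, no new continuation argument is needed: the resulting inequality on $\R^+$ is linear in the $\ep$-weighted quantities with a small coefficient, the sole non-small factor being $\exp\{\int_0^\infty(\la_1f_{1,n}+\la_2f_{2,n}+\la_3f_{3,n})\}\le\exp\{C_r\mu^{-2r}\|u_0^d\|^{2r}_{\dot B^{-1+\f dp}_{p,r}}\}$ by \eqref{3.7}, which produces \eqref{3.1}; and \eqref{3.1adf} follows identically by running Lemma~\ref{lem3} in $\|t^{\al_2^\ep}\,\cdot\,\|_{L^r_t(L^{q_1})}$ as in \eqref{3.17}.

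The hard part will be exactly this index bookkeeping. For each of $u_n\,\na u_n$, $\mu a_n\D u_n$ and $(1+a_n)\na\Pi_n$ one must check that it splits into an $\ep$-weighted factor times a companion factor that is \emph{either} of $a_n$-type, hence controlled by $\|a_0\|_{L^\infty}$, \emph{or} one of the non-shifted norms already bounded in \eqref{3.7}--\eqref{3.17} (for a suitable choice of the non-shifted Lebesgue triple $p_1,p_2,p_3$), with the time exponents summing to $\al_1^\ep$ (resp.\ $\al_2^\ep$), the Lebesgue exponents to $\f1{q_1}$, and all H\"older-in-time exponents admissible; the identities $\al_1^\ep=\bar\beta_1+\gamma_1^\ep=\beta_2^\ep+\bar\gamma_2$ are exactly the arithmetic making this possible, and the delicate point is that the companion factor carrying $u_n^d$ or $\na u_n^d$ must receive the \emph{un-shifted} weight, so that it coincides with one of the $f_{i,n}$ and the exponential-weight device closes.
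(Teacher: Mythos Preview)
Your proposal is essentially correct and matches the paper's approach: the paper, too, exploits the identities $\al_1^\ep=\bar\beta_1+\gamma_1^\ep=\beta_2^\ep+\bar\gamma_2$ to pair the $\ep$-weighted factor (carrying $u_{\la,n}$) with an un-shifted, scaling-critical companion (carrying the raw $u_n^h$ or $u_n^d$), then absorbs the $u_n^h$-companion via the smallness already established in Proposition~\ref{uniformbound1} and kills the $u_n^d$-companion with the exponential weight.

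Two small clarifications versus the paper. First, the paper defines \emph{new} weight functions $g_{1,n}(t)=\|t^{\bar\beta_1}\na u_n^d(t)\|_{L^{q_2}}^{2r}$ and $g_{2,n}(t)=\|t^{\bar\gamma_2}u_n^d(t)\|_{L^{q_3}}^{2r}$ at the Lebesgue exponents $q_2,q_3$, rather than reusing the $f_{i,n}$ of \eqref{3.8} (which live at $p_2,p_3$): since the H\"older splitting must close in $L^{q_1}$, the companion factor has to be measured in $L^{q_2}$ or $L^{q_3}$, so the weights must be built at the $q_i$. Your phrase ``for a suitable choice of the non-shifted Lebesgue triple $p_1,p_2,p_3$'' is exactly the right instinct, but you should make it explicit that this means applying Proposition~\ref{uniformbound1} with $p_i$ replaced by $q_i$; the paper does this tacitly when it cites \eqref{3.7} for the bound $\|t^{\bar\beta_1}\na u_n^h\|_{L^{2r}_t(L^{q_2})}\le Cc_0\mu^{d/(2q_2)}$. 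Second, the paper uses only \emph{two} weight functions, dropping the analogue of your $f_{3,n}$: as you yourself note, since \eqref{3.1} is for the full $u_n$, the term $\mu\|a_0\|_{L^\infty}\|t^{\al_1^\ep}\D u_{\la,n}\|_{L^{2r}_t(L^{q_1})}$ is absorbed directly by $C\|a_0\|_{L^\infty}\le\tfrac12$ without any horizontal/vertical split, so no third weight is needed.
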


\begin{proof} For $N$ large enough, we already proved in Proposition \ref{uniformbound1}  that
\eqref{schema} has a unique global smooth solution $(a_n, u_n,
\na\Pi_n).$ It remains to prove \eqref{3.1} and \eqref{3.1adf}. In
order to do so, for $\la_1, \la_2>0,$ we denote \beq
\label{3.2}\begin{split} &g_{1,n}(t)\eqdefa \|t^{\bar{\beta}_1}\na
u^d_n(t)\|_{L^{q_2}}^{2r} ,\quad
g_{2,n}(t)\eqdefa \|t^{\bar{\gamma}_2}u^d_{n}(t)\|_{L^{q_3}}^{2r},\\
&u_{\la,n}(t,x)\eqdefa u_n(t,x)\exp\Bigl\{-
\int_0^t\bigl(\la_1g_{1,n}(t')+\la_2g_{2,n}(t')\bigr)\,dt'\Bigr\},
\end{split} \eeq and similar notation for
$\Pi_{\la,n}(t,x).$ Then it follows from a similar derivation of
\eqref{pest} that \beq\label{pest2}
\begin{split}
\|\na \Pi_{\la,n}(t)\|_{L^{q_1}}\leq &
C\Bigl\{\mu\|a_0\|_{L^\infty}\|\tri
u_{\la,n}\|_{L^{q_1}}+\|u_{\la,n}^h\|_{L^{q_3}}\|\na
u_{n}^h\|_{L^{q_2}}\\
&+\|u_n^{d}\|_{L^{q_3}}\|\na
u_{\la,n}^h\|_{L^{q_2}}+\|u_{\la,n}^h\|_{L^{q_3}}\|\na
u_n^{d}\|_{L^{q_2}}\Bigr\}.
\end{split}
\eeq
 Whereas by virtue of
\eqref{pesta}  and \eqref{3.2}, we write \beq \label{3.3}
\begin{split}
u_{\la,n}=& u_{n,L}\exp\Bigl\{-
\int_0^t(\la_1g_{1,n}(t')+\la_2g_{2,n}(t'))\,dt'\Bigr\}\\
&+ \int_0^t e^{\mu(t-s)\Delta}\exp\Bigl\{-
\int_s^t(\la_1g_{1,n}(t')+\la_2g_{2,n}(t'))\,dt'\Bigr\}\\
&\qquad\times\bigl(-u_{n}\cdot\nabla u_{\la,n}+\mu a_{n}\Delta
u_{\la,n}-(1+a_{n})\na\Pi_{\la,n}\bigr)\,ds.
\end{split}\eeq
For $\gamma_1^\e, \beta_1^\e$ given by \eqref{indexad1}, we get, by
applying Lemma \ref{lem4} for $r_1=2r$, that \beno
\begin{split} \mu^{\f{d}{2q_2}}&\|t^{\gamma_1^\ep}u_{\la,n}\|_{L^{\infty}_t(L^{q_3})}
 + \mu^{\f12+\f{d}{2q_3}}\|t^{\beta_1^\ep}\na
 u_{\la,n}\|_{L^{2r}_t(L^{q_2})}\\
 \leq & \mu^{\f{d}{2q_2}}\|t^{\gamma_1^\ep}u_{n,L}\|_{L^{\infty}_t(L^{q_3})}
 + \mu^{\f12+\f{d}{2q_3}}\|t^{\beta_1^\ep}\na
 u_{n,L}\|_{L^{2r}_t(L^{q_2})}\\
 &+\bigl\|\exp\bigl\{-
\int_s^t(\la_1g_{1,n}(t')+\la_2g_{2,n}(t'))\,dt'\bigr\}\\
&\qquad\times s^{\alpha_1^\ep}\bigl(-u_{n}\cdot\nabla u_{\la,n}+\mu
a_{n}\Delta u_{\la,n}-(1+a_{n})\na
\Pi_{\la,n}\bigr)\bigr\|_{L^{2r}_t(L^{q_1})}.
 \end{split}
 \eeno
Similarly for $\gamma_2^\e,\beta_2^\e$ and $\al_1^\e,$ we deduce
from Lemma \ref{lem1} and Lemma \ref{lem5} for $r_1=2r$ that \beno
\begin{split}
 \mu^{\f{d}{2q_2}}&\|t^{\gamma_2^\ep}u_{\la,n}\|_{L^{2r}_t(L^{q_3})}
 +\mu^{\f12+\f{d}{2q_3}}\|t^{\beta_2^\ep}\na u_{\la,n}\|_{L^{\infty}_t(L^{q_2})}+\mu\|t^{\alpha_1^\ep} \D u_{\la,n}\|_{L^{2r}_t(L^{q_1})}\\
\leq
&\mu^{\f{d}{2q_2}}\|t^{\gamma_2^\ep}u_{n,L}\|_{L^{2r}_t(L^{q_3})}
 +\mu^{\f12+\f{d}{2q_3}}\|t^{\beta_2^\ep}\na u_{n,L}\|_{L^{\infty}_t(L^{q_2})}+\mu\|t^{\alpha_1^\ep} \D u_{n,L}\|_{L^{2r}_t(L^{q_1})}\\
 &+\bigl\|\exp\bigl\{-
\int_s^t(\la_1g_{1,n}(t')+\la_2g_{2,n}(t'))\,dt'\bigr\}\\
&\qquad\times s^{\alpha_1^\ep}\bigl(-u_{n}\cdot\nabla u_{\la,n}+\mu
a_{n}\Delta u_{\la,n}-(1+a_{n})\na
\Pi_{\la,n}\bigr)\bigr\|_{L^{2r}_t(L^{q_1})}.
 \end{split}
 \eeno
Hence, by using \eqref{2.2aq},  Proposition \ref{prop1.1} and
\eqref{pest2}, we obtain
 \beq\label{3.4}
\begin{split} \mu^{\f{d}{2q_2}}&\bigl(\|t^{\gamma_1^\ep}u_{\la,n}\|_{L^{\infty}_t(L^{q_3})} +\|t^{\gamma_2^\ep}u_{\la,n}\|_{L^{2r}_t(L^{q_3})}\bigr)
 + \mu^{\f12+\f{d}{2q_3}}\bigl(\|t^{\beta_1^\ep}\na u_{\la,n}\|_{L^{2r}_t(L^{q_2})}\\
 &+\|t^{\beta_2^\ep}\na u_{\la,n}\|_{L^{\infty}_t(L^{q_2})}\bigr)
 +\mu\|t^{\alpha_1^\ep} \D u_{\la,n}\|_{L^{2r}_t(L^{q_1})}\\
\leq &C\Bigl\{
\mu^{-\f12(1-\f{d}{q_1}-\ep)}\|u_0\|_{\dot{B}^{-1+\f{d}{q_1}+\ep}_{q_1,2r}}+
\mu\|a_0\|_{L^\infty}\|t^{\alpha_1^\ep} \D u_{\la,n}\|_{L^{2r}_t(L^{q_1})}\\
&+\|t^{\gamma_1^\ep}u_{\la,n}^h\|_{L^{\infty}_t(L^{q_3})}\|t^{\bar{\beta}_1}\na
u_{n}^h\|_{L^{2r}_t(L^{q_2})}\\
&+\Bigl(\int_{0}^te^{-
2r\int_s^t(\la_1g_{1,n}(t')+\la_2g_{2,n}(t'))\,dt'}\bigl(\|s^{\bar{\gamma}_2}u_{n}^d(s)\|_{L^{q_3}}^{2r}\|s^{\beta_2^\ep}\na
u_{\la,n}^h(s)\|_{L^{q_2}}^{2r}\\
&\qquad+\|s^{\gamma_1^\ep}u_{\la,n}^h(s)\|_{L^{q_3}}^{2r}\|s^{\bar{\beta}_1}\na
u_{n}^d(s)\|_{L^{q_2}}^{2r}\bigr)\,ds\Bigr)^{\f1{2r}}\Bigr\}.
\end{split}\eeq
Then as $C\|a_0\|_{L^\infty}\leq \f12,$ and \beno
\begin{split}
&\Bigl(\int_{0}^te^{-2\la_1
r\int_s^tg_{1,n}(t')\,dt'}\|s^{\gamma_1^\ep}u_{\la,n}^h\|_{L^{q_3}}^{2r}\|t^{\bar{\beta}_1}\na
u_{n}^d(s)\|_{L^{q_2}}^{2r} \,ds\Bigr)^{\f1{2r}}\leq \f1{(2\la_1
r)^{\f1{2r}}}\|s^{\gamma_1^\ep} u_{\la,n}^h\|_{L^{\infty}_t(L^{q_3})},\\
&\Bigl(\int_{0}^te^{-2\la_2 r\int_s^tg_{2,n}(t')\,dt'}
 \|s^{\bar{\gamma}_2}u_{n}^d(s)\|_{L^{q_3}}^{2r}\|t^{\beta_2^\ep}\na
u_{\la,n}^h(s)\|_{L^{q_2}}^{2r}\,ds\Bigr)^{\f1{2r}}\leq \f1{(2\la_2
r)^{\f1{2r}}} \|s^{\beta_2^\ep}\na
u_{\la,n}^h\|_{L^{\infty}_t(L^{q_2})},
\end{split}
\eeno  we infer from \eqref{3.4} that \beq\label{aplq} \begin{split}
\mu^{\f{d}{2q_2}}&\bigl(\|t^{\gamma_1^\ep}u_{\la,n}\|_{L^{\infty}_t(L^{q_3})}
+\|t^{\gamma_2^\ep}u_{\la,n}\|_{L^{2r}_t(L^{q_3})}\bigr)
+ \mu^{\f12+\f{d}{2q_3}}\bigl(\|t^{\beta_2^\ep}\na u_{\la,n}\|_{L^{\infty}_t(L^{q_2})} \\
 &+\|t^{\beta_1^\ep}\na u_{\la,n}\|_{L^{2r}_t(L^{q_2})}\bigr)
 +\mu\|t^{\alpha_1^\ep} \D u_{\la,n}\|_{L^{2r}_t(L^{q_1})}\\
\leq &C\Bigl\{
\mu^{-\f12(1-\f{d}{q_1}-\ep)}\|u_0\|_{\dot{B}^{-1+\f{d}{q_1}+\ep}_{q_1,r}}
+\|t^{\gamma_1^\ep}u_{\la,n}^h\|_{L^{\infty}_t(L^{q_3})}\|t^{\bar{\beta}_1}\na
u_{n}^h\|_{L^{2r}_t(L^{q_2})}\\
&+\f1{(2\la_1 r)^{\f1{2r}}}\|t^{\gamma_1^\ep}
u_{\la,n}^h\|_{L^{\infty}_t(L^{q_3})}   +\f1{(2\la_2 r)^{\f1{2r}}}
\|t^{\beta_2^\ep}\na u_{\la,n}^h\|_{L^{\infty}_t(L^{q_2})}\Bigr\}.
\end{split}\eeq
Recalling from \eqref{3.7} that
$$ \|t^{\bar{\beta}_1}\na
u_{n}^h\|_{L^{2r}_t(L^{q_2})}\leq Cc_0\mu^{\f{d}{2q_2}},$$ so that
as long as $c_0$ is small enough in \eqref{small1}, taking
$\la_1=\f{(4C)^{2r}}{2r\mu^{\f{d}{q_2}r}}$,
$\la_2=\f{(4C)^{2r}}{2r\mu^{(1+\f{d}{q_3})r}}$ in \eqref{aplq}
results in \beq\label{3.5} \begin{split}
\mu^{\f{d}{2q_2}}&\bigl(\f12\|t^{\gamma_1^\ep}u_{\la,n}^h\|_{L^{\infty}_t(L^{q_3})}
+\|t^{\gamma_2^\ep}u_{\la,n}^h\|_{L^{2r}_t(L^{q_3})}\bigr)
 + \mu^{\f12+\f{d}{2q_3}}\bigl(\|t^{\beta_1^\ep}\na u_{\la,n}^h\|_{L^{2r}_t(L^{q_2})}\\
 &+\f34\|t^{\beta_2^\ep}\na u_{\la,n}^h\|_{L^{\infty}_t(L^{q_2})}\bigr)+\mu\|t^{\alpha_1^\ep} \D u_{\la,n}^h\|_{L^{2r}_t(L^{q_1})}\\
\leq & C
\mu^{\f12(\f{d}{q_1}-1-\ep)}\|u_0\|_{\dot{B}^{-1+\f{d}p+\ep}_{p,r}}.
\end{split}\eeq
From \eqref{3.7}, \eqref{3.2},  and \eqref{3.5}, we infer \beno
\begin{split}
\mu^{\f{d}{2q_2}}&\bigl(\|t^{\gamma_1^\ep}u_{n}\|_{L^{\infty}_t(L^{q_3})}
+\|t^{\gamma_2^\ep}u_{n}\|_{L^{2r}_t(L^{q_3})}\bigr)
 + \mu^{\f12+\f{d}{2q_3}}\bigl(\|t^{\beta_1^\ep}\na u_{n}\|_{L^{2r}_t(L^{q_2})}\\
 &+\|t^{\beta_2^\ep}\na u_{n}\|_{L^{\infty}_t(L^{q_2})}\bigr)+\mu\|t^{\alpha_1^\ep} \D u_{n}\|_{L^{2r}_t(L^{q_1})}\\
\leq & C
\mu^{\f12(\f{d}{q_1}-1-\ep)}\|u_0\|_{\dot{B}^{-1+\f{d}p+\ep}_{p,r}}\exp\Bigl\{C_r\int_0^t
\mu^{-\f{d}{q_2}r} \|s^{\bar{\beta}_1}\na
u^d_n(s)\|_{L^{q_2}}^{2r}\\
&\qquad\qquad\qquad\qquad\qquad\qquad\qquad\qquad+\mu^{-(1+\f{d}{q_3})r}\|s^{\bar{\gamma}_2}u^d_{n}(s)\|_{L^{q_3}}^{2r}\,ds\Bigr\}\\
\leq &
C\mu^{\f12(\f{d}{q_1}-1-\ep)}\|u_0\|_{\dot{B}^{-1+\f{d}p+\ep}_{p,r}}\exp\Bigl\{C_r
\mu^{-2r}\|u_0^d\|_{\dot{B}^{-1+\f{d}p}_{p,r}}^{2r}\Bigr\},
\end{split}\eeno which implies \eqref{3.1}. It remains to prove
\eqref{3.1adf}. In fact, we get, by applying Lemma \ref{lem1} and
\eqref{pest2},  that \beno
\begin{split}
\mu\|t^{\alpha_2^\ep} \D u_{n}\|_{L^{r}(\R^+;L^{q_1})}
 \leq &
\mu^{-\f12(1-\f{d}{q_1}-\ep)}\|
u_{0}\|_{\dot{B}^{-1+\f{d}p+\ep}_{p,r}}+C\Bigl\{
\mu\|a_0\|_{L^\infty}\|t^{\alpha_2^\ep}\D
u_{n}\|_{L^{r}(\R^+;L^{q_1})}\\
&+\|t^{\gamma_2^\ep}u_{n}\|_{L^{2r}(\R^+;L^{q_3})}\|t^{\bar{\beta}_1}\na
u_{n}^h\|_{L^{2r}(\R^+;
L^{q_2})}\\
&+\|t^{\bar{\gamma}_2}u_{n}^h\|_{L^{2r}(\R^+;
L^{q_3})}\|t^{\beta_1^\ep}\na u_n^{d}\|_{L^{2r}(\R^+;
L^{q_2})}\Bigr\},
\end{split}
\eeno which along with \eqref{3.7}, \eqref{3.1} and  the fact:
$C\|a_0\|_{L^\infty}\leq \f12,$ gives rise to the first inequality
of \eqref{3.1adf}. The second inequality of \eqref{3.1adf} follows
along the same line. This completes the proof of Proposition
\ref{uniformbound1}.
\end{proof}

To prove the uniqueness part of Theorem \ref{thm1.1bis}, we need the
following lemma:

\begin{prop}\label{prop4.1}
{\sl Let  $\al_1, \beta_1,\beta_2,\gamma_1,\gamma_2,$ and
$p_1,p_2,p_3$ be given by Theorem \ref{thm1.1}, if $ t^{\al_1} f,
t^{\al_1} \na g,  t^{\al_1} R \in L^{2r}((0,T);L^{p_1}(\R^d)).$ Then
the system
\begin{equation}\label{Stokes}
 \quad\left\{\begin{array}{l}
\displaystyle \pa_t v -\tri v+ \grad P=f, \\
\displaystyle \dv\, v = g, \\
\displaystyle \pa_t g=\dive R,\\
\displaystyle v|_{t=0}=0,
\end{array}\right.
\end{equation}
has a unique solution $(v, \na P)$ so that \beq\label{4.2}
\begin{split}
\|t^{\gamma_1}v&\|_{L^\infty_T(L^{p_3})}+\|t^{\gamma_2}v\|_{L^{2r}_T(L^{p_3})}+\|t^{\beta_1}\na
v\|_{L^{2r}_T(L^{p_2})}+\|t^{\beta_2}\na v\|_{L^{\infty}_T(L^{p_2})}\\
&+ \|(t^{\al_1}\pa_t v, t^{\al_1}\na^2 v, t^{\al_1}\na
P)\|_{L^{2r}_T(L^{p_1})}\leq C\|(t^{\al_1} f, t^{\al_1}\na g,
t^{\al_1} R)\|_{L^{2r}_T(L^{p_1})}.\end{split} \eeq}
\end{prop}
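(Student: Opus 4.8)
The plan is to eliminate the pressure, reduce \eqref{Stokes} to a forced heat equation, and then read off every weighted norm of $v$ from Lemmas \ref{lem3}, \ref{lem4} and \ref{lem5} applied with $\mu=1$. First I would take the divergence of the momentum equation and use $\dive v=g$ together with $\pa_t g=\dive R$ to get $\tri(P-g)=\dive(f-R)$, hence $\na P=\na\tri^{-1}\dive(f-R)+\na g$. Since the entries of $\na\tri^{-1}\dive$ are, up to sign, compositions $R_iR_j$ of Riesz transforms, this operator is bounded on $L^{p_1}(\R^d)$ for $1<p_1<\infty$, and therefore $\|t^{\al_1}\na P\|_{L^{2r}_T(L^{p_1})}\leq CM$, where $M\eqdefa\|(t^{\al_1}f,\,t^{\al_1}\na g,\,t^{\al_1}R)\|_{L^{2r}_T(L^{p_1})}$. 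The same identity settles uniqueness: for the difference of two solutions the force vanishes and the velocity is divergence free, so $\na P$ is harmonic hence zero, and then $\pa_t v-\tri v=0$ with $v|_{t=0}=0$ forces $v\equiv0$. Setting $\tilde f\eqdefa f-\na P$, one has $\|t^{\al_1}\tilde f\|_{L^{2r}_T(L^{p_1})}\leq CM$ and, since $v|_{t=0}=0$, the Duhamel formula gives $\tri v=\cA\tilde f$, $\na v=\cB\tilde f$, $v=\cC\tilde f$, with $\cA$, $\cB$, $\cC$ the operators from Lemmas \ref{lem1} and \ref{lem4}.

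Next I would estimate the top-order terms. Because $0\leq\al_1$ and $\al_1+\f1{2r}=\f12(3-\f d{p_1})<1$ (both since $\f{dr}{2r-1}<p_1<d$), Lemma \ref{lem3} applies with time-exponent $2r$, space-exponent $p_1$ and weight $\al_1$ and gives $\|t^{\al_1}\tri v\|_{L^{2r}_T(L^{p_1})}\leq CM$; as $\pa_i\pa_j=R_iR_j\tri$ and $R_iR_j$ is bounded on $L^{p_1}$, this controls $\|t^{\al_1}\na^2 v\|_{L^{2r}_T(L^{p_1})}$, and $\pa_t v=\tilde f+\tri v$ then controls $\|t^{\al_1}\pa_t v\|_{L^{2r}_T(L^{p_1})}$. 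For the lower-order norms the point is a routine check of indices: taking $\ep=0$, $r_1=2r$ and $(q_1,q_2,q_3)=(p_1,p_2,p_3)$, the weights $\wt{\alpha}^0,\wt{\beta}_1^0,\wt{\beta}_2^0,\wt{\gamma}_1^0,\wt{\gamma}_2^0$ of Lemmas \ref{lem4} and \ref{lem5} are precisely $\al_1,\beta_1,\beta_2,\gamma_1,\gamma_2$ of \eqref{indexad}, and the conditions $\max(p,\f{dr}{2r-1})<p_1<d$, $\f{dr}{r-1}<p_3<\infty$, $\f1{p_2}+\f1{p_3}=\f1{p_1}$, $r>1$ from Theorem \ref{thm1.1} imply all the hypotheses of those lemmas (in particular $p_3>\f{dr}{r-1}>d$ and $\f{dr_1}{r_1-2}=\f{dr}{r-1}<p_3$). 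Thus Lemma \ref{lem4} applied to $\tilde f$ gives $\|t^{\beta_1}\na v\|_{L^{2r}_T(L^{p_2})}+\|t^{\gamma_1}v\|_{L^\infty_T(L^{p_3})}\leq CM$ and Lemma \ref{lem5} gives $\|t^{\beta_2}\na v\|_{L^\infty_T(L^{p_2})}+\|t^{\gamma_2}v\|_{L^{2r}_T(L^{p_3})}\leq CM$; collecting all of these bounds yields \eqref{4.2}.

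Finally, for existence I would reverse the construction: define $\na P\eqdefa\na\tri^{-1}\dive(f-R)+\na g$ (which lies in the asserted space by the first paragraph), set $v\eqdefa\cC(f-\na P)$, and verify that $(v,\na P)$ solves \eqref{Stokes}. The momentum equation and $v|_{t=0}=0$ are immediate; the only nontrivial point is $\dive v=g$, which follows from $\dive\cC h=\cC(\dive h)$, the relation $\dive f-\tri P=\pa_t g-\tri g$ (a consequence of the formula for $\na P$ and $\pa_t g=\dive R$), the identity $\int_0^t e^{(t-s)\tri}\pa_s g\,ds=g(t)-e^{t\tri}g(0)+\cC(\tri g)$, and $g|_{t=0}=\dive v|_{t=0}=0$. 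I expect the only genuinely delicate step to be the pressure elimination: it is essential that the hypothesis supplies $t^{\al_1}\na g$---rather than merely $t^{\al_1}g$---in $L^{2r}_T(L^{p_1})$, so that $\na g$ can be absorbed into $\na P$ with no loss of derivatives; everything else is a direct application of the three maximal-regularity lemmas already established.
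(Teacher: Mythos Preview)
Your proof is correct and follows essentially the same route as the paper: eliminate the pressure by taking the divergence (the paper writes $\tri P=\dive(f+\na g-R)$, which is algebraically equivalent to your $\tri(P-g)=\dive(f-R)$), bound $\na P$ via Riesz transforms on $L^{p_1}$, represent $v$ by Duhamel with source $f-\na P$, and then read off all the weighted norms from Lemmas \ref{lem3}, \ref{lem4}, \ref{lem5} with $\ep=0$, $r_1=2r$. The paper's proof is a terse three lines; your version simply spells out the index checks, the existence construction, and the uniqueness argument that the paper leaves implicit.
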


\begin{proof}
We first get, by taking space divergence on \eqref{Stokes},  that
\beno \D P=\dive(f+\na g-R), \eeno which implies \beno \|\na
P\|_{L^q(\R^d)}\leq C\|(f,\na g, R)\|_{L^q(\R^d)}, \eeno for any
$q\in (1,\infty).$ On the other hand, we have \beq\label{4kernel}
v=\int_0^te^{(t-s)\D}(f-\na P)\,ds, \eeq from which, \eqref{Stokes},
Lemma \ref{lem3}, and Lemma \ref{lem4} and Lemma \ref{lem5} for
$\e=0$ and $r_1=2r,$ we deduce  \eqref{4.2}.
\end{proof}

\begin{lem}\label{lem4.1}
{\sl Let $(a,u)$ be a global weak solution of \eqref{INS}, which
satisfies (\ref{thm1b}-\ref{thm1c}) and (\ref{thm1cg}-\ref{thm1d}).
Then for any $\ep\in (0,\min\{1-\f1r,\f{d}p-1\}),$ one has
\beq\label{lem4.1q}
\begin{split}
&\|\na^2u\|_{L^{\f8{8-\ep}}_T(L^d)}+\|\na
u\|_{L^{\f8{8-\ep}}_T(L^\infty)}\leq C_{T,\ep},\\
&\|t^{\al_1}\na
u\|_{L^{2r}_T(L^\infty)}+\|t^{\al_1}\na^2u\|_{L^{2r}_T(L^d)}\leq
C_{T,\ep}\quad \mbox{for any}\quad T<\infty. \end{split} \eeq }
\end{lem}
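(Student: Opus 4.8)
The plan is to derive \eqref{lem4.1q} purely from the interpolation-type structure already encoded in estimates \eqref{thm1b}, \eqref{thm1c}, \eqref{thm1cg} and \eqref{thm1d}. The key point is that we have control of $\Delta u$ in two different Lebesgue exponents in the space variable: from \eqref{thm1b} we have $t^{\alpha_1}\Delta u\in L^{2r}_T(L^{p_1})$ with $p_1<d$ (and more precisely, by Remark \ref{rmk3.1} with the choice $p_1=\f{d}{1+\e}$, we get $\Delta u\in L^{\tau_1}_T(L^{p_1})$ with $\tau_1=\f8{8-\e}$ after absorbing the time weight $t^{-\alpha_2}$, using $\alpha_2 r'<1$ since $p_1<d$), while from \eqref{thm1cg} with the choice $q_1=\f{2d}{2-\e}>d$ we get $t^{\alpha_1^\e}\Delta u\in L^{2r}_T(L^{q_1})$, hence $\Delta u\in L^{\tau_1^\e}_T(L^{q_1})$ with $\tau_1^\e=\f8{8-\e}$ after absorbing $t^{-\alpha_2^\e}$ (again using $\alpha_2^\e r'<1$). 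Since $p_1<d<q_1$, H\"older's inequality in space interpolates between $L^{p_1}$ and $L^{q_1}$ to give $\Delta u\in L^{\f8{8-\e}}_T(L^d)$; because $\|\na^2 u\|_{L^d}\lesssim\|\Delta u\|_{L^d}$ for the divergence-free field $u$ (via the Riesz transform boundedness on $L^d$, $1<d<\infty$), this yields the first half of the first line of \eqref{lem4.1q}.

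The second bound, $\|\na u\|_{L^{\f8{8-\e}}_T(L^\infty)}\leq C_{T,\e}$, follows from Gagliardo–Nirenberg / Sobolev embedding: since $q_1>d$, one has $W^{1,q_1}(\R^d)\hookrightarrow \dot C^{1-d/q_1}$, more concretely $\|\na u\|_{L^\infty}\lesssim \|\na u\|_{L^{p_2}}^{\theta}\|\na^2 u\|_{L^{q_1}}^{1-\theta}$ for a suitable $\theta$, or more directly one notes $\na u = \na(-\Delta)^{-1}\dive(\na u\otimes\cdot)$-type manipulations are unnecessary — it suffices to combine $\na u\in L^{p_2}_T(L^{p_2})$-type control from \eqref{thm1b}/\eqref{thm1cg} with $\na^2 u\in L^{q_1}$-control just obtained, so that $\na u\in L^{\tilde\tau}_T(W^{1,q_1})\hookrightarrow L^{\tilde\tau}_T(L^\infty)$ with $\tilde\tau=\f8{8-\e}$ after checking the time-weight exponents line up (this is where the precise values $\alpha_1^\e,\alpha_2^\e$ from \eqref{indexad1} and the constraint $\e<\min\{1/r,1-1/r,d/p-1\}$ enter). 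The weighted versions in the second line of \eqref{lem4.1q} are obtained identically but keeping the weight $t^{\alpha_1}$ (resp.\ $t^{\alpha_1^\e}$, which are comparable up to $t^{(\e-\e')/2}$ bounded on $(0,T)$) in place and using the $L^{2r}_T$ bounds directly from \eqref{thm1b} and \eqref{thm1cg} without absorbing the weight.

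The main obstacle — really the only nonroutine point — is bookkeeping the time exponents: one must verify that the weight powers $\alpha_2$, $\alpha_2^\e$ attached to $\Delta u$ in the $L^r_T$-estimates satisfy $\alpha_2 r'<1$ and $\alpha_2^\e r'<1$ so that $t^{-\alpha_2}\in L^{s}((0,T))$ for an $s$ producing exactly the exponent $\f8{8-\e}$ after H\"older in time; this is exactly the computation sketched in Remark \ref{rmk3.1}, using $p_1=\f{d}{1+\e}$ and $q_1=\f{2d}{2-\e}$, and it forces the restriction $\e<\min\{1-\f1r,\f dp-1\}$. One also needs $q_1>d$ strictly (hence the name ``almost critical'': $\e>0$ is essential) for the embedding $W^{1,q_1}\hookrightarrow L^\infty$; with $\e=0$ one only gets $q_1=d$ and $W^{1,d}\not\hookrightarrow L^\infty$, which is precisely why the extra regularity $u_0\in\dot B^{-1+d/p+\e}_{p,r}$ is indispensable. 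Everything else is Hölder in space, Hölder in time, Sobolev embedding, and the $L^q$-boundedness of Riesz transforms for $1<q<\infty$.
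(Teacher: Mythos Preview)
Your treatment of $\|\na^2 u\|_{L^{8/(8-\ep)}_T(L^d)}$ agrees with the paper: H\"older interpolation in space between $L^{p_1}=L^{d/(1+\ep)}$ and $L^{q_1}=L^{2d/(2-\ep)}$, with Riesz transforms to pass from $\Delta u$ to $\na^2 u$.

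For $\|\na u\|_{L^\infty}$ your route diverges and carries a gap. You invoke the inhomogeneous embedding $W^{1,q_1}(\R^d)\hookrightarrow L^\infty$ for $q_1>d$, but this needs $\na u\in L^{q_1}$ in addition to $\na^2 u\in L^{q_1}$; the estimates \eqref{thm1b} and \eqref{thm1cg} only place $\na u$ in $L^{p_2}$ and $L^{q_2}$, both strictly below $q_1$ (since $\f1{q_2}=\f1{q_1}+\f1{q_3}$ with $q_3<\infty$). This can be repaired --- e.g.\ Sobolev from $\na^2 u\in L^{d/(1+\ep)}$ gives $\na u\in L^{d/\ep}$, then interpolate down to $L^{q_1}$ --- but you then owe a check that the time weights still produce the exponent $\f8{8-\ep}$. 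Your Gagliardo--Nirenberg formula $\|\na u\|_{L^\infty}\lesssim\|\na u\|_{L^{p_2}}^\theta\|\na^2 u\|_{L^{q_1}}^{1-\theta}$ is actually valid and would work, but you drop it without doing the time bookkeeping.

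The paper bypasses all of this via Littlewood--Paley: split $\na u=\sum_{j\leq 0}+\sum_{j>0}$ and use Bernstein (Lemma~\ref{lem2.1}) to get
\[
\|\na\dot\Delta_j u\|_{L^\infty}\lesssim 2^{j\ep}\|\na^2\dot\Delta_j u\|_{L^{d/(1+\ep)}}\ (j\leq 0),\qquad
\|\na\dot\Delta_j u\|_{L^\infty}\lesssim 2^{-j\ep/2}\|\na^2\dot\Delta_j u\|_{L^{2d/(2-\ep)}}\ (j>0).
\]
Summing yields $\|\na u\|_{L^\infty}\lesssim \|\na^2 u\|_{L^{d/(1+\ep)}}+\|\na^2 u\|_{L^{2d/(2-\ep)}}$ pointwise in $t$, and both right-hand terms already lie in $L^{8/(8-\ep)}_T$ from the first step. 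No separate $\na u$ control is needed, so the time exponents come for free; the weighted second line of \eqref{lem4.1q} follows identically.
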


\begin{proof} We first deduce from  Remark
\ref{rmk3.1} that $\na^2 u \in L^{\f8{8-\ep}}((0,T);
L^{\f{d}{1+\ep}}(\R^d))\cap L^{\f8{8-\ep}}((0,T);
L^{\f{2d}{2-\ep}}(\R^d))$ for any $T<\infty.$
 Then applying H\"older's inequality yields
  \beq\label{4.3}
  \|\na^2 u \|_{L^{\f8{8-\ep}}_T(L^d)} \leq
\|\na^2 u\|_{L^{\f8{8-\ep}}_T(L^{\f{d}{1+\ep}})}^{\f13} \|\na^2
u\|_{L^{\f8{8-\ep}}_T(L^{\f{2d}{2-\ep}})}^{\f23}\leq C_{T,\ep}.\eeq
Whereas by virtue of  Lemma \ref{lem2.1}, one has \beno
\begin{split} \|\na u\|_{L^{\f8{8-\ep}}_T(L^\infty)}&\lesssim \sum_{j\leq
0}\|\na \dot{\D}_ju\|_{L^{\f8{8-\ep}}_T(L^\infty)}+\sum_{j> 0}\|\na
\dot{\D}_ju\|_{L^{\f8{8-\ep}}_T(L^\infty)}\\
&\lesssim \sum_{j\leq 0}2^{j\ep}\|\na^2
\dot{\D}_ju\|_{L^{\f8{8-\ep}}_T(L^{\f{d}{1+\ep}})}
+\sum_{j>0}2^{-j\f{\ep}2}\|\na^2
\dot{\D}_ju\|_{L^{\f8{8-\ep}}_T(L^{\f{2d}{2-\ep}})}\\
&\lesssim \|\na^2 u\|_{L^{\f8{8-\ep}}_T(L^{\f{d}{1+\ep}})} +\|\na^2
u\|_{L^{\f8{8-\ep}}_T(L^{\f{2d}{2-\ep}})}\leq C_{T,\ep},
\end{split}
\eeno which together with \eqref{4.3} proves the first line of
\eqref{lem4.1q}. The second line of \eqref{4.3} follows along the
same lines.
\end{proof}

Thanks to Lemma \ref{lem4.1}, we can  taking $T$ small enough so
that \beq\label{lipassume} \int_0^T\|\na u(t)\|_{L^\infty}dt\leq
\f12. \eeq

As in \cite{dm,dm2}, we shall prove the uniqueness part of Theorem
\ref{thm1.1bis} by using the Lagrangian formulation of \eqref{INS}.
Toward this, we first recall some basic facts concerning Lagrangian
coordinates from \cite{dm, dm2}. By virtue of \eqref{lipassume}, for
any $y\in\R^d,$  the following ordinary differential equation has a
unique solution on $[0,T]$ \beq\label{ode} \f{dX(t,y)}{dt}= u(t,
X(t,y))\eqdefa v(t,y), \qquad X(t,y)|_{t=0}=y. \eeq This leads to
the following relation between the Eulerian coordinates $x$ and the
Lagrangian coordinates $y$: \beq\label{ode1} x=
X(t,y)=y+\int_0^tv(\tau,y)\,d\tau. \eeq Let $Y(t,\cdot)$ be the
inverse mapping of $X(t,\cdot)$. Then $D_xY(t,x)=(D_yX(t,y))^{-1}$
for $x=X(t,y)$. Providing $D_yX-Id$ is small enough, we have \beno
D_xY
=(Id+(D_yX-Id))^{-1}=\sum_{k=0}^{\infty}(-1)^k(\int_0^tD_yv(\tau,y)d\tau)^k.\eeno
We denote $A(t,y)\eqdefa (\na X(t,y))^{-1}=\na_x Y(t,x),$ then we
have \beq\label{h.12}
 \nabla_x u(t,x)=^TA(t,x)\nabla_y v(t,y)\quad\mbox{ and}\quad
\dive u(t,x)=\dive(A(t,y) v(t,y)). \eeq By the chain rule, we also
have \ben\label{eq:A-div} \dive_y\big(A\cdot\big)=^TA:\na_y.\een
Here and in what follows, we always denote $^TA$ the transpose
matrix of $A.$

As in \cite{dm,dm2}, we denote \beq\label{h.13}
\begin{split}
\na_u\eqdefa&  ^TA\cdot\na_y,\quad
\dive_u\eqdefa\dive(A\cdot)\quad\mbox{and}\quad
\D_u\eqdefa\dive_u\na_u,\\
b(t,y)&\eqdefa a(t, X(t,y)), \quad v(t,y)\eqdefa u(t,
X(t,y))\quad\mbox{and}\quad P(t,y)\eqdefa \Pi(t, X(t,y)).
\end{split}
\eeq Notice that for any $t>0,$  the  solution of \eqref{INS}
obtained in  Theorem \ref{thm1.1bis} satisfies the smoothness
assumption of Proposition 2 in \cite{dm2}, so that $(b, v, \na P)$
defined by \eqref{h.13} fulfils \begin{equation}\label{INSL}
 \quad\left\{\begin{array}{l}
\displaystyle b_t=0,\\
\displaystyle \pa_t v -(1+b)(\mu\tri_u v- \grad_u P)=0, \\
\displaystyle \dv_u\, v = 0, \\
\displaystyle (b,v)|_{t=0}=(a_0, u_0),
\end{array}\right.
\end{equation}
which is  the Lagrangian formulation of \eqref{INS}. For the sake of
simplicity, we shall take $\mu=1$ in what follows.

We now present the proof  of Theorem \ref{thm1.1bis}.

\begin{proof}[ Proof of Theorem
\ref{thm1.1bis}] We first deduce from the  proof to the existence
part  of Theorem \ref{thm1.1}, \eqref{3.1} and \eqref{3.1adf} that
the global weak solution $(a,u,\na\Pi)$ constructed in Theorem
\ref{thm1.1} satisfies \eqref{thm1cg} and \eqref{thm1d}.  It remains
to prove the uniqueness part of Theorem \ref{thm1.1bis}.

Let $(a_i,u_i,\Pi_i), i=1,2,$ be two solutions of \eqref{INS} which
satisfies (\ref{thm1b}-\ref{thm1c}) and (\ref{thm1cg}-\ref{thm1d}).
Let $X_i, (v_i, P_i), A_i, i=1,2$ be given by \eqref{ode1} and
\eqref{h.13}. We denote \beno \delta v\eqdefa v_2-v_1,\quad \d
P\eqdefa P_2-P_1,\eeno then $(\d v, \d P)$ solves
\begin{equation}\label{differ}
 \quad\left\{\begin{array}{l}
\displaystyle \pa_t \delta v -\tri\delta v+\grad \delta P=a_0(\tri\delta v- \grad \delta P)+\delta f_1 +\delta f_2, \\
\displaystyle \dv\, \delta v = \delta g, \\
\displaystyle  \pa_t \delta g =\dive \delta R,\\
\displaystyle \delta v|_{t=0}=0,
\end{array}\right.
\end{equation}
with \beno\begin{split} \delta f_1 &\eqdefa(1+a_0)[(Id- ^TA_2)\na
\delta P-\delta A \na
P_1],\\
\delta f_2 &\eqdefa\mu(1+a_0)\dive[(A_2 ^TA_2-Id)\na \delta v+ (A_2
^TA_2-
A_1 ^TA_1)\na v_1],\\
\delta g &\eqdefa (Id-A_2):D\delta v-\delta A:Dv_1,\\
\delta R &\eqdefa\pa_t[(Id-A_2)\delta v]-\pa_t[\delta A v_1].
\end{split} \eeno
In what follows, we will use repeatedly the following fact  (see
\cite{dm} for instance) that \beq\label{4.5} \delta A(t)=(\int_0^t
D\delta vd\tau)(\sum_{k\geq 1}\sum_{0\leq
j<k}C_i^jC_2^{k-1-j})\quad\mbox{with}\quad
C_i(t)\eqdefa\int_0^tDv_id\tau.\eeq Let the indices $\al_1,
\beta_1,\beta_2,\gamma_1$ and $\gamma_2$ be given by
\eqref{indexad}. As in Remark \ref{rmk3.1}, we take
$p_1=\f{d}{1+\e}$ and $p_2, p_3$ satisfying $\f{dr}{r-1}<p_3<\infty$
and $\f1{p_2}+\f1{p_3}=\f1{p_1}.$ We denote \beq\label{g1t}
\begin{split}
G(t) \eqdefa & \|t^{\gamma_1}\delta v\|_{L^{\infty}_t(L^{p_3})}+
\|t^{\gamma_2}\delta v\|_{L^{2r}_t(L^{p_3})}+\|t^{\beta_1}\na \delta
v\|_{L^{2r}_t(L^{p_2})} \\
&+ \|t^{\beta_2}\na \delta v\|_{L^{\infty}_t(L^{p_2})}
+\|(t^{\al_1}\pa_t\delta v, t^{\al_1}\na^2\delta v,
t^{\al_1}\na\delta P)\|_{L^{2r}_t(L^{\f{d}{1+\ep}})}. \end{split}
\eeq Then  we deduce from Proposition \ref{prop4.1} and
\eqref{differ} that \beq\label{4.6} G(t)\leq
C\bigl(\|t^{\al_1}\delta f_1\|_{L^{2r}_t(L^{\f{d}{1+\ep}})} +
\|t^{\al_1}\delta f_2\|_{L^{2r}_t(L^{\f{d}{1+\ep}})} +
\|t^{\al_1}\na\delta g\|_{L^{2r}_t(L^{\f{d}{1+\ep}})} +
\|t^{\al_1}\delta R\|_{L^{2r}_t(L^{\f{d}{1+\ep}})}\bigr) \eeq as
long as $C\|a_0\|_{L^\infty}\leq \f12.$

Let us now estimate term by term on the right-hand side of
\eqref{4.6}. We first get, by using \eqref{thm1b} and
\eqref{lipassume}, that \beno
\begin{split}
\|t^{\al_1}\delta f_1\|_{L^{2r}_t(L^{\f{d}{1+\ep}})} &\lesssim
\|Id-^TA_2\|_{L^\infty_t(L^\infty)}\|t^{\al_1}\na \delta
P\|_{L^{2r}_t(L^{\f{d}{1+\ep}})}+ \|\delta
A\|_{L^\infty_t(L^{\f{d}{\e}})}\|t^{\al_1}\na
P_1\|_{L^{2r}_t(L^{d})}
\\
&\lesssim\|\na v_2\|_{L^1_t(L^\infty)}G(t) +\|\na\delta
v\|_{L^1_t(L^{\f{d}{\ep}})}(\|t^{\al_1}\na
P_1\|_{L^{2r}_t(L^{p_1})})^\th (\|t^{\al_1}\na
P_1\|_{L^{2r}_t(L^{q_1})})^{1-\th},
\end{split}
\eeno for $\th$ determined by $\f1d=\f{\th}{p_1}+\f{1-\th}{q_1}.$
However by virtue of Sobolev  embedding theorem,
$W^{1,\f{d}{1+\ep}}(\R^d)\hookrightarrow L^{\f{d}{\ep}}(\R^d)$, one
has \beq \label{ess} \|\na\delta v\|_{L^1_t(L^{\f{d}{\ep}})}\lesssim
\|\na^2\delta v\|_{L^1_t(L^{\f{d}{1+\ep}})}\lesssim
t^{\f12(\f{d}{p_1}-1)}\|t^{\al_1}\na^2\d
v\|_{L^{2r}_t(L^{\f{d}{1+\e}})}, \eeq we obtain \beq\label{4.6af}
\|t^{\al_1}\delta f_1\|_{L^{2r}_t(L^{\f{d}{1+\ep}})} \lesssim
\eta(t)G(t),\eeq for some positive continuous function $\eta(t)$
which tends to $0$ as  $t\to 0$. Along the same line, we deduce
\beno
\begin{split}
\|t^{\al_1}&\na ((Id-A_2):D\delta v)\|_{L^{2r}_t(L^{\f{d}{1+\ep}})}\\
&\lesssim \|DA_2\otimes t^{\al_1}D\delta
v\|_{L^{2r}_t(L^{\f{d}{1+\ep}})}+\|(Id-A_2)\otimes
t^{\al_1}D^2\delta
v\|_{L^{2r}_t(L^{\f{d}{1+\ep}})}\\
&\lesssim \|\na^2 v_2\|_{L^1_t(L^d)}\|t^{\al_1}\na\delta
v\|_{L^{2r}_t(L^{\f{d}{\ep}})} + \|\na
v_2\|_{L^1(L^\infty)}\|t^{\al_1}\na^2\delta
v\|_{L^{2r}_t(L^{\f{d}{1+\ep}})}\\
&\lesssim \|t^{\al_1}\na^2\delta v\|_{L^{2r}_t(L^{\f{d}{1+\ep}})}
(\|\na^2 v_2\|_{L^1_t(L^d)}+\|\na v_2\|_{L^1(L^\infty)}),
\end{split}
\eeno and \beno
\begin{split}
\|t^{\al_1}&\na (\delta A:Dv_1)\|_{L^{2r}_t(L^{\f{d}{1+\ep}})}\\
&\lesssim \| t^{\al_1}|\na v_1|\int_0^\tau |\na^2 \delta
v|d\tau'\|_{L^{2r}_t(L^{\f{d}{1+\ep}})}+ \|
t^{\al_1}|\na^2v_1|\int_0^\tau |\na \delta
v|d\tau'\|_{L^{2r}_t(L^{\f{d}{1+\ep}})}\\
&\lesssim \|t^{\al_1}\na v_1\|_{L^{2r}_t(L^{\infty})}\|\na^2 \delta
v\|_{L^1_t(L^{\f{d}{1+\e}})} + \|t^{\al_1}\na^2
v_1\|_{L^{2r}_t(L^{d})}\|\na\delta
v\|_{L^1_t(L^{\f{d}\e})}\\
&\lesssim t^{\f12(\f{d}{p_1}-1)}\|t^{\al_1}\na^2 \d
v\|_{L^{2r}_t(L^{\f{d}{1+\ep}})} (\|t^{\al_1}\na
v_1\|_{L^{2r}_t(L^{\infty})}+ \|t^{\al_1}\na^2
v_1\|_{L^{2r}_t(L^{d})}).
\end{split}
\eeno So that for all $t\in [0,T]$, we get, by using
\eqref{lem4.1q}, that \beq\label{4.6ah} \|t^{\al_1}\na \delta
g\|_{L^{2r}_t(L^{\f{d}{1+\ep}})} \lesssim \eta(t)G(t). \eeq The
estimate to the term $\delta f_2$ can be handled along  the same
line.

To deal with  $\delta R,$ we denote $Dv_{1,2}$ to be the components
of $Dv_1$ and $Dv_2$. Then  we get , by using \eqref{4.5} once
again, that \beno\begin{split} \|t^{\al_1}\pa_t((Id-A_2)\delta
v)\|_{L^{2r}_t(L^{\f{d}{1+\ep}})}
 &\lesssim \|t^{\beta_1}Dv_2
t^{\gamma_1}\delta v\|_{L^{2r}_t(L^{\f{d}{1+\ep}})}+
\|(Id-A_2)t^{\al_1}\pa_t \delta
v\|_{L^{2r}_t(L^{\f{d}{1+\ep}})}\\
&\lesssim \|t^{\beta_1}\na
v_2\|_{L^{2r}_t(L^{p_2})}\|t^{\gamma_1}\delta
v\|_{L^{\infty}_t(L^{p_3})}\\
&\qquad+ \|\na v_2\|_{L^1_t(L^\infty)}\|t^{\al_1}\pa_t \delta
v\|_{L^{2r}_t(L^{\f{d}{1+\ep}})},
\end{split}\eeno
and \beno\begin{split} \|t^{\al_1}\pa_t(\delta A
v_1)\|_{L^{2r}_t(L^{\f{d}{1+\ep}})} \lesssim& \|t^{\gamma_2}v_1
t^{\beta_2}D\delta v \|_{L^{2r}_t(L^{\f{d}{1+\ep}})}+ \|\delta A
t^{\al_1}\pa_t v_1\|_{L^{2r}_t(L^{\f{d}{1+\ep}})}\\
&+ \|\int_0^\tau |D\delta v|d\tau't^{\beta_2}|Dv_{1,2}|t^{\gamma_2}
|v_1|\|_{L^{2r}_t(L^{\f{d}{1+\ep}})}\\
 \lesssim&
\|t^{\beta_2}\na\delta
v\|_{L^{\infty}_t(L^{p_2})}\|t^{\gamma_2}v_1\|_{L^{2r}_t(L^{p_3})}
+\|\na\delta v\|_{L^1_t(L^{\f{d}\e})}\|t^{\al_1}\pa_t
v_1\|_{L^{2r}_t(L^{d})}\\
&+ \|\na\delta v\|_{L^1_t(L^{\f{d}\e})}\bigl(\|t^{\beta_2}\na
v_{1,2}\|_{L^{\infty}_t(L^{p_2})}\|t^{\gamma_2}v_1\|_{L^{2r}_t(L^{p_3})}\bigr)^\th\\
&\qquad\times \bigl(\|t^{\beta_2}\na
v_{1,2}\|_{L^{\infty}_t(L^{q_2})}\|t^{\gamma_2}v_1\|_{L^{2r}_t(L^{q_3})}\bigr)^{1-\th},
\end{split}
\eeno for $\th$ given by $\f1d=\f\th{p_1}+\f{1-\th}{q_1}.$ Hence it
follows from \eqref{thm1b} and \eqref{thm1cg} that \beq\label{4.6al}
\|t^{\al_1}\delta R\|_{L^{2r}_t(L^{\f{d}{1+\ep}})} \lesssim
\eta(t)G(t).\eeq

Substituting (\ref{4.6af}-\ref{4.6al}) into \eqref{4.6}  results in
\beq\label{4.7}\begin{split} G(t) &\lesssim \eta(t) G(t),
\end{split}\eeq which implies the uniqueness of the solution to \eqref{INS} on a sufficiently small
time interval. Then  uniqueness part of Theorem \ref{thm1.1bis} can
be completed by a bootstrap method. This concludes the proof of
Theorem \ref{thm1.1bis}.
 \end{proof}


\setcounter{equation}{0}
\section{Proof of Theorem \ref{thm1.2}}

The goal of this section is to present the proof of Theorem
\ref{thm1.2}. Indeed given  $a_0\in L^\infty(\R^d)\cap
B^{\f{d}q+\ep}_{q,\infty}(\R^d),$ $u_0\in\dot
B^{-1+\f{d}p-\ep}_{p,r}(\R^d)\cap \dot B^{-1+\f{d}p}_{p,r}(\R^d)$
with $ \|a_0\|_{L^\infty\cap B^{\f{d}q+\ep}_{q,\infty}}$ being
sufficiently small and $p,q,\ep$ satisfying the conditions listed in
Theorem \ref{thm1.2}, we deduce from \cite{Haspot} that there exists
a positive time $T$ so that \eqref{INS} has a unique solution $(a,u,
\na\Pi)$ with \beq
\begin{split}
&a\in C([0,T]; L^\infty(\R^d)\cap
B^{\f{d}q+\f\e2}_{q,\infty}(\R^d)),\quad u\in
\wt{L}^\infty_T(\dot{B}^{-1+\f{d}p-s}_{p,r})\cap \wt{L}^1_T(
\dot{B}^{1+\f{d}p-s}_{p,r})\quad\\
& \mbox{and}\quad \na \Pi\in \wt{L}^1_T(
\dot{B}^{-1+\f{d}p-s}_{p,r})\quad \mbox{for}\quad s=0\ \ \ \and\ \
\ep.
\end{split}\label{5.14}
\eeq We denote $T^\ast$ to be the lifespan of this solution. Then
the proof of Theorem \ref{thm1.2} is reduced to prove that
$T^\ast=\infty.$

\subsection{The estimate of the density}  Notice from \eqref{5.14} that
$u\in \wt{L}^1_T(\dot{B}^{-1+\f{d}{p}-\e}_{p,r})\cap
\wt{L}^1_T(\dot{B}^{-1+\f{d}{p}}_{p,r}),$  which is not Lipschitz in
the space variables,
 the regularity of the solution
$a$ to \eqref{INS} may be coarsen for positive time. In order to
applying the losing derivative estimate in \cite{BCD}, we first need
to prove that $u\in L^1_T(C_\mu)$ for $\mu(r)=r(-\log r)^\al$ and
some $\al\in (0,1).$ Indeed, let $u\in \wt{L}^1_T(
\dot{B}^{1+\f{d}p-\e}_{p,r})\cap
\wt{L}^1_T(\dot{B}^{1+\f{d}p}_{p,r})$ for some $\ep>0,$ we denote
$\theta(t,x,y)\eqdefa|u(t,x)-u(t,y)|.$ Then similar to the proof of
Proposition 2.1 in  \cite{CL}, for any positive integer $N$ and
$\ep_1>0,$ one has \beno
\begin{split}
\theta(t,x,y)\leq& |x-y|(2+N)^{1-\f1r+\ep_1}\sum_{-1\leq j\leq
N}\f{\|\na \D_j
u(t)\|_{L^\infty}}{(2+j)^{1-\f1r+\ep_1}}\\
&+2\sum_{j>N} 2^{-(2+j)}(2+j)^{1-\f1r+\ep_1}\f{2^{2+j}\|\D_j
u(t)\|_{L^\infty}}{(2+j)^{1-\f1r+\ep_1}}. \end{split}\eeno  Note
that $2^xx^\alpha$ with $0<\alpha<1$ is a decreasing function, we
get, by taking  $N=[1-\log|x-y|]-2$ in the above inequality, that
\beno \theta(t,x,y) \leq C|x-y|(1-\log|x-y|)^{1-\f1r+\ep_1}
\sum_{j\geq -1}\f{\|\na\D_j
u(t)\|_{L^\infty}}{(2+j)^{1-\f1r+\ep_1}},\eeno from which, we infer
\beq\label{8.0}\begin{split} \int_0^T&\sup_{0<|x-y|<1}
\f{\theta(t,x,y)}{|x-y|(1-\log|x-y|)^{1-\f1r+\ep_1}}\,dt\\
&\leq C (\sum_{j\geq -1} \|\na\D_j
u\|_{L^1_T(L^\infty)}^r)^{\f1r}(\sum_{j\geq -1}
\f1{(2+j)^{(1-\f1r+\ep_1)r'}})^{\f1{r'}} \leq C_{\ep_1}\|\na
u\|_{\wt{L}^1_T(B^0_{\infty,r})}, \end{split}\eeq where $r'$ denotes
the conjugate number of $r.$

On the other hand,  it follows from Lemma \ref{lem2.1} that \beno
\|\na\D_{-1}u\|_{L^1_T(L^\infty)}\leq
C_{\ep}\|u\|_{\wt{L}^1_T(\dot{B}^{1+\f{d}p-\ep}_{p,r})}, \eeno so
that \beno \|\na u\|_{\wt{L}^1_T(B^0_{\infty,r})} \leq C_{\ep}\bigl(
\|u\|_{\wt{L}^1_T(\dot{B}^{1+\f{d}p-\ep}_{p,r})}
+\|u\|_{\wt{L}^1_T(\dot{B}^{1+\f{d}p}_{p,r})}\bigr),\eeno which
together with \eqref{8.0} and Theorem 3.33 of \cite{BCD} (see also
\cite{danchin3}) implies that \beq\label{aest}
\|a\|_{\wt{L}^\infty_t(B^{\f{d}q+\f{\e}2}_{q,\infty})}\leq
C_{\ep}\|a_0\|_{B^{\f{d}q+\ep}_{q,\infty}}\exp\Bigl\{C_\ep(\|u\|_{\wt{L}^1_t(\dot{B}^{1+\f{d}p-\ep}_{p,r})}+\|u\|_{\wt{L}^1_t(\dot{B}^{1+\f{d}p}_{p,r})})\Bigr\},
\eeq for any  $t<T,$ $\f{d}q<1+\f{d}p$ and $0<\ep<1+\f{d}p-\f{d}q$.

\subsection{The estimate of the pressure}
We first get, by taking space divergence to the momentum equation of
\eqref{INS}, that \beno
 -\D\Pi=\dive(a\na\Pi)-\mu\dive (a\D
u)+\dive_h\dive_h(u^h\otimes
u^h)+\dive_h(u^d\p_du^h)+\p_d(u^h\cdot\na_hu^d)+\p_d^2(u^d)^2. \eeno
Thanks to $\dive u=0,$ one has \beno
\begin{split}\p_d(u^h\cdot\na_hu^d)=& \pa_du^h\cdot\na_h
u^d+u^h\cdot\na_h\pa_du^d\\
=&\dive_h(u^d\pa_du^h)-u^d\pa_d\dive_hu^h +
\dive_h(u^h\pa_du^d)-\dive_hu^h\pa_du^d\\
=&\dive_h(u^d\pa_du^h)-\pa_d(u^d\dive_hu^h)-\dive_h(u^h\dive_hu^h),
\end{split} \eeno which gives rise to \beq\label{5.1}\begin{split} -\D\Pi=&\dive(a\na\Pi)-\mu\dive (a\D
u)+\dive_h\dive_h(u^h\otimes
u^h)+2\dive_h(u^d\p_du^h)\\
&-3\pa_d(u^d\dive_hu^h)-\dive_h(u^h\dive_hu^h). \end{split}\eeq

The following proposition concerning the estimate of the pressure
will be the key ingredient used in the estimate of the horizontal
component of the velocity.

\begin{prop}\label{prop5.1}
{\sl Let $1<q\leq p<2d$, $r>1$ and $0<\ep<\f{2d}p-1.$ let $a\in
L^\infty_T(L^\infty)\cap \wt{L}^\infty_T(B^{\f{d}q}_{q,\infty}),$
$u\in \wt{L}^\infty_T(\dot{B}^{-1+\f{d}p-s}_{p,r})\cap
\wt{L}^1_T(\dot{B}^{1+\f{d}p-s}_{p,r}),$ for $s=0$ and $\ep,$ be the
unique local solution of \eqref{INS} given by \eqref{5.14}. We
denote \beq\label{5.2}
 f(t)\eqdefa \|u^d(t)\|^{2r}_{\dot{B}^{-1+\f{d}p+\f1r}_{p,r}} \quad
\mbox{ and} \quad\Pi_{\la}\eqdefa \Pi
\exp\Bigl\{-\la\int_0^tf(t')\,dt'\Bigr\}\quad\mbox{for}\quad \la>0,
\eeq and similar notation for $u_{\la}.$ Then \eqref{5.1} has a
unique solution $\na\Pi\in
\wt{L}^1_T(\dot{B}^{-1+\frac{d}p-\e}_{p,r})\cap
\wt{L}^1_T(\dot{B}^{-1+\frac{d}p}_{p,r})$ which decays to zero when
$|x|\to\infty$ so that for all $t\in [0,T],$ there holds \beq
\label{5.3}
\begin{split}
\|\na\Pi_{\la}\|_{\wt{L}^1_t(\dot{B}^{-1+\frac{d}p-s}_{p,r})}\leq &
\frac{C}{1-C\|a\|_{L^\infty_t(L^\infty)\cap\wt{L}^\infty_t(
B^{\f{d}q}_{q,\infty})}}\Bigl\{\|u^h_{\la}\|_{\wt{L}^1_t(\dot{B}^{1+\f{d}p-s}_{p,r})}^{1-\f1{2r}}\|u^h_{\la}\|_{\wt{L}^1_{t,f}(\dot{B}^{-1+\f{d}p-s}_{p,r})}^{\f1{2r}}
\\
&\bigl(\mu\|a\|_{L^\infty_t(L^\infty)\cap\wt{L}^\infty_t(
B^{\f{d}q}_{q,\infty})}
+\|u^h\|_{\wt{L}^\infty_t(\dot{B}^{-1+\f{d}p}_{p,r})}\bigr)\|u^h_{\la}\|_{\wt{L}^1_t(\dot{B}^{1+\f{d}p-s}_{p,r})}\\
&+\mu \|a\|_{L^\infty_t(L^\infty)\cap\wt{L}^\infty_t(
B^{\f{d}q}_{q,\infty})}\|u^d\|_{\wt{L}^1_t(\dot{B}^{1+\f{d}p-s}_{p,r})}\Bigr\}\quad\mbox{for}\
\ s=0,\e,
\end{split}
\eeq provided that $C\|a\|_{L^\infty_T(L^\infty)\cap\wt{L}^\infty_T(
B^{\f{d}q}_{q,\infty})}\leq \f12,$ and where the  norms of
$\wt{L}^1_t(\dot{B}^{1+\f{d}p-s}_{p,r})$ and
$\wt{L}^1_{t,f}(\dot{B}^{1+\f{d}p-s}_{p,r})$
 are given by Definitions \ref{chaleur+} and
\ref{defa.3} respectively.}
\end{prop}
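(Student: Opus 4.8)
The plan is to estimate each term on the right-hand side of the elliptic equation \eqref{5.1} for $\Pi$ in the Chemin--Lerner space $\wt{L}^1_t(\dot B^{-1+\frac dp-s}_{p,r})$, after multiplying through by the exponential weight $\exp\{-\la\int_0^t f(t')\,dt'\}$ defined in \eqref{5.2}. First I would recall that the operator $\na(-\D)^{-1}\dive\,\dive$ is an order-zero Fourier multiplier, hence bounded on every homogeneous Besov space $\dot B^\sigma_{p,r}(\R^d)$; likewise $\na(-\D)^{-1}\dive$ is bounded, so from \eqref{5.1} one gets, after weighting,
\beno
\begin{split}
\|\na\Pi_\la\|_{\dot B^{-1+\frac dp-s}_{p,r}}\lesssim\ &\|a\na\Pi_\la\|_{\dot B^{-1+\frac dp-s}_{p,r}}+\mu\|a\D u_\la\|_{\dot B^{-1+\frac dp-s}_{p,r}}\\
&+\|u^h\otimes u^h_\la\|_{\dot B^{\frac dp-s}_{p,r}}+\|u^d\p_d u^h_\la\|_{\dot B^{-1+\frac dp-s}_{p,r}}\\
&+\|u^d\,\dive_h u^h_\la\|_{\dot B^{-1+\frac dp-s}_{p,r}}+\|u^h\,\dive_h u^h_\la\|_{\dot B^{-1+\frac dp-s}_{p,r}}.
\end{split}
\eeno
The first two terms are absorbed: since $s\in\{0,\e\}$ with $0<\frac dq<1+\frac dp$, the paraproduct/product laws give $\|ag\|_{\dot B^{-1+\frac dp-s}_{p,r}}\lesssim\|a\|_{L^\infty\cap B^{d/q}_{q,\infty}}\|g\|_{\dot B^{-1+\frac dp-s}_{p,r}}$ uniformly in time, so taking $C\|a\|_{L^\infty_T\cap \wt L^\infty_T(B^{d/q}_{q,\infty})}\le\frac12$ lets the $a\na\Pi_\la$ term move to the left, producing the prefactor $1/(1-C\|a\|_{\cdots})$, while the $a\D u_\la$ term contributes $\mu\|a\|_{L^\infty\cap B^{d/q}_{q,\infty}}\|\D u_\la\|_{\dot B^{-1+\frac dp-s}_{p,r}}\sim\mu\|a\|_{\cdots}\|u_\la\|_{\wt L^1_t(\dot B^{1+\frac dp-s}_{p,r})}$ after integration in $t$; splitting $u=(u^h,u^d)$ gives the last line of \eqref{5.3}.

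The genuinely delicate terms are the ones quadratic in the velocity, and the key structural point — exactly the one emphasized in Remark \ref{rmk1.1} — is that every such term carries at least one horizontal derivative ($\dive_h$) or one horizontal component, so that after the order-$(-1)$ or order-$0$ multiplier one never needs two full derivatives of $u^h$. For the bilinear estimates I would use the anisotropic-flavoured product law in $\dot B^{s}_{p,r}$ combined with Hölder in the time variable. Schematically, write (for $s=0$ or $\e$)
\beno
\|u^d\,\p_d u^h_\la\|_{L^1_t(\dot B^{-1+\frac dp-s}_{p,r})}\lesssim \|u^d\|_{\wt L^2_t(\dot B^{\frac12-\frac1{2r}+\frac dp}_{p,r})}\,\|u^h_\la\|_{\wt L^2_t(\dot B^{\frac12+\frac1{2r}-1+\frac dp-s}_{p,r})},
\eeno
and then interpolate each factor between $\wt L^\infty_t(\dot B^{-1+\frac dp}_{p,r})$ (for $u^d$) resp.\ $\wt L^\infty_t(\dot B^{-1+\frac dp-s}_{p,r})$ (for $u^h_\la$) and $\wt L^1_t(\dot B^{1+\frac dp-s}_{p,r})$, so that the $u^d$-factor is controlled by $\bigl(\int_0^t f\bigr)^{1/(2r)}$-type weighted norm $\|u^d\|_{\wt L^1_{t,f}}$ and the $u^h_\la$-factor produces the combination $\|u^h_\la\|_{\wt L^1_t(\dot B^{1+\frac dp-s}_{p,r})}^{1-\frac1{2r}}\|u^h_\la\|_{\wt L^1_{t,f}(\dot B^{-1+\frac dp-s}_{p,r})}^{\frac1{2r}}$ appearing in \eqref{5.3}. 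The terms $u^h\otimes u^h_\la$, $u^d\,\dive_h u^h_\la$ and $u^h\,\dive_h u^h_\la$ are treated identically, using that the $u^h$-factor not carrying the weight can be put in $\wt L^\infty_t(\dot B^{-1+\frac dp}_{p,r})$ (this is where the factor $\|u^h\|_{\wt L^\infty_t(\dot B^{-1+\frac dp}_{p,r})}$ in \eqref{5.3} comes from), while the weighted $u^h_\la$-factor absorbs the remaining regularity.

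The main obstacle — and the reason the weighted Chemin--Lerner norms of Definition \ref{defa.3} are introduced — is that one cannot close the velocity estimate with a Gronwall argument directly in $\wt L^\infty_t(\dot B^{-1+\frac dp}_{p,r})$-type norms, because $\wt L^\infty_t$ is not $L^\infty_t$ of a Besov norm. The whole point of conjugating by $\exp\{-\la\int_0^t f\}$ and of the precise split of the exponents $\frac1{2r}+(1-\frac1{2r})=1$ is to trade a factor $f^{1/(2r)}$ against a factor $\la^{-1/(2r)}$ after a Cauchy--Schwarz/Hölder step in $t$, so that for $\la$ large (to be chosen in the subsequent velocity estimate) the dangerous term is absorbed. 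Care is needed to check that all the Besov indices used in the product laws are admissible under the standing hypotheses $1<q\le p<2d$, $|\frac1q-\frac1p|\le\frac1d$, $0<\e<\frac{2d}p-1$ (so that the intermediate index $-1+\frac dp+\frac1r$ appearing in $f$ lies in the correct range for the embedding $\dot B^{-1+\frac dp+\frac1r}_{p,r}\hookrightarrow$ the relevant space), and that the $s=\e$ case is genuinely parallel to the $s=0$ case — which it is, since shifting every homogeneous index by $-\e$ preserves all the product-law inequalities. Uniqueness and the decay of $\na\Pi$ at infinity follow from the explicit formula $\na\Pi=\na(-\D)^{-1}(\text{RHS of }\eqref{5.1})$ together with the fact that the right-hand side belongs to a homogeneous Besov space of negative-enough regularity.
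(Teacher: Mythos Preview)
Your overall architecture is right: rewrite \eqref{5.1} as $\na\Pi_\la=\na(-\D)^{-1}[\cdots]$, use the boundedness of the Riesz-type multipliers, apply Lemma~\ref{lem5.3} to absorb $a\na\Pi_\la$ and to bound $\mu a\D u_\la$, and use a bilinear product law for the quadratic-in-$u$ terms. This is exactly the skeleton of the paper's proof.

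The gap is in your treatment of the cross term $u^d\na u^h_\la$ (and the analogous $u^d\dive_h u^h_\la$). Your proposed mechanism --- split as $\wt L^2_t\times\wt L^2_t$ and then interpolate each factor between $\wt L^\infty_t$ and $\wt L^1_t$ Chemin--Lerner norms --- cannot produce the weighted norm $\|u^h_\la\|_{\wt L^1_{t,f}(\dot B^{-1+\frac dp-s}_{p,r})}$ of Definition~\ref{defa.3}. That norm is not an interpolant of ordinary Chemin--Lerner norms; the time weight $f(t')=\|u^d(t')\|^{2r}_{\dot B^{-1+\frac dp+\frac1r}_{p,r}}$ has to be inserted \emph{inside} the $L^1_t$ integral before one takes the $\ell^r$ sum in $j$. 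You also misidentify which factor carries the weight: you write that ``the $u^d$-factor is controlled by $\|u^d\|_{\wt L^1_{t,f}}$'', but in \eqref{5.3} no weighted norm of $u^d$ appears at all --- $u^d$ \emph{is} the weight. The correct mechanism, carried out in Lemma~\ref{lem5.2}, works frequency by frequency after a Bony decomposition: one first bounds $\|\dot S_{j'-1}u^d(t')\|_{L^\infty}\lesssim 2^{j'(1-\frac1r)}\|u^d(t')\|_{\dot B^{-1+\frac dp+\frac1r}_{p,r}}$ pointwise in $t'$, and then applies H\"older in time with exponents $2r$ and $\frac{2r}{2r-1}$ \emph{with respect to the measure} $\|\dot\D_{j'}u^h_\la(t')\|_{L^p}\,dt'$, namely
\[
\int_0^t\|u^d\|_{\dot B^{-1+\frac dp+\frac1r}_{p,r}}\|\dot\D_{j'}u^h_\la\|_{L^p}\,dt'
\le\Bigl(\int_0^t f\,\|\dot\D_{j'}u^h_\la\|_{L^p}\,dt'\Bigr)^{\frac1{2r}}
\|\dot\D_{j'}u^h_\la\|_{L^1_t(L^p)}^{1-\frac1{2r}}.
\]
This is what generates, after multiplying by $2^{j'(-1+\frac dp-s)}$ and taking $\ell^r$, the product $\|u^h_\la\|_{\wt L^1_t(\dot B^{1+\frac dp-s}_{p,r})}^{1-\frac1{2r}}\|u^h_\la\|_{\wt L^1_{t,f}(\dot B^{-1+\frac dp-s}_{p,r})}^{\frac1{2r}}$. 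The purely horizontal quadratic terms $u^h\otimes u^h_\la$ and $u^h\dive_h u^h_\la$ do \emph{not} need this device; they are handled by the standard product law of Lemma~\ref{lem5.1}, which accounts for the factor $\|u^h\|_{\wt L^\infty_t(\dot B^{-1+\frac dp}_{p,r})}\|u^h_\la\|_{\wt L^1_t(\dot B^{1+\frac dp-s}_{p,r})}$ in \eqref{5.3}.
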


The proof of this proposition will mainly be based on the following
lemmas:

\begin{lem}\label{lem5.1}
{\sl For $s<\f{d}p,$ one has \beno
\begin{split}
&\|g h\|_{\wt{L}^1_t(\dot{B}^{\f{d}p-s}_{p,r})}\lesssim
\|g\|_{\wt{L}^\infty_t(\dot{B}^{-1+\f{d}p}_{p,r})}
\|h\|_{\wt{L}^1_t(\dot{B}^{1+\f{d}p-s}_{p,r})}
+\|g\|_{\wt{L}^1_t(\dot{B}^{1+\f{d}p}_{p,r})}\|h\|_{\wt{L}^\infty_t(\dot{B}^{-1+\f{d}p-s}_{p,r})}.
\end{split}
\eeno}
\end{lem}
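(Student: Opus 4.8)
The plan is to use Bony's paraproduct decomposition together with the Chemin--Lerner version of the standard paraproduct and remainder estimates in homogeneous Besov spaces. Write $gh = T_g h + T_h g + R(g,h)$, where $T$ is the paraproduct and $R$ the remainder operator; I would treat each of the three pieces separately, keeping track of which factor lives in the low-regularity ($L^\infty_t$) space and which in the high-regularity ($L^1_t$) space. The guiding principle is that the paraproduct $T_g h$ is governed by the $L^\infty$-type Besov regularity of $g$ and the full Besov regularity of $h$, and symmetrically for $T_h g$, while the remainder $R(g,h)$ distributes regularity additively and hence needs the condition on $s$.

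First I would handle $T_g h$. Since the paraproduct $T_g h$ maps $L^\infty \times \dot B^{\sigma}_{p,r}$-type data to $\dot B^{\sigma}_{p,r}$, and more precisely in the time-dependent setting $\|T_g h\|_{\wt L^1_t(\dot B^{\f dp - s}_{p,r})} \lesssim \|g\|_{\wt L^\infty_t(\dot B^{-1+\f dp}_{p,r})}\|h\|_{\wt L^1_t(\dot B^{1+\f dp - s}_{p,r})}$; here one uses that $\dot B^{-1+\f dp}_{p,r} \hookrightarrow \dot B^{-1}_{\infty,r}$ (recall $-1+\f dp - \f dp = -1$ is the correct Sobolev shift from $L^p$ to $L^\infty$) so that $g$ plays the role of a negative-regularity $L^\infty$ multiplier, and the loss of one derivative on $g$ is compensated by the gain of one derivative on $h$ — the time indices combine by Hölder, $\f1\infty + \f11 = \f11$. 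Symmetrically, $\|T_h g\|_{\wt L^1_t(\dot B^{\f dp - s}_{p,r})} \lesssim \|h\|_{\wt L^1_t(\dot B^{1+\f dp - s}_{p,r})}\|g\|_{\wt L^\infty_t(\dot B^{-1+\f dp}_{p,r})}$ by the same argument with the roles of the two factors in the paraproduct exchanged; one just needs $-1+\f dp < \f dp$ and $1 + \f dp - s$ to be the regularity actually carried, which holds since $s < \f dp$. Alternatively $T_h g$ could be bounded by the second term on the right-hand side, placing $h$ in $\wt L^\infty_t(\dot B^{-1+\f dp - s}_{p,r})$ and $g$ in $\wt L^1_t(\dot B^{1+\f dp}_{p,r})$.

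For the remainder term $R(g,h)$, the standard estimate gives convergence and the bound $\|R(g,h)\|_{\dot B^{\sigma_1+\sigma_2}_{p,r}} \lesssim \|g\|_{\dot B^{\sigma_1}_{p_1,r}}\|h\|_{\dot B^{\sigma_2}_{p_2,\infty}}$ whenever $\sigma_1 + \sigma_2 > 0$ and $\f1{p_1}+\f1{p_2} = \f1p$ (here I can take $p_1 = p_2 = $ ... more carefully, keep both in $L^p$ and pay with $\f dp$ via the embedding $\dot B^{\f dp}_{p,1}\hookrightarrow L^\infty$); choosing $\sigma_1 = 1+\f dp$ for $g$ and $\sigma_2 = -1+\f dp - s$ for $h$ gives $\sigma_1 + \sigma_2 = \f dp - s > 0$ precisely because $s < \f dp$, landing in $\dot B^{\f dp - s}_{p,r}$, and then Hölder in time with exponents $1$ and $\infty$ produces $\|g\|_{\wt L^1_t(\dot B^{1+\f dp}_{p,r})}\|h\|_{\wt L^\infty_t(\dot B^{-1+\f dp - s}_{p,r})}$, matching the second term on the right-hand side. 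The main obstacle I anticipate is purely bookkeeping: making sure each of the three pieces is dominated by one of the two terms allowed on the right-hand side (rather than a third combination), verifying that all the Sobolev embeddings used are at critical indices (so no extra smallness or largeness of the domain is needed), and checking the summability of the $\ell^r$-type sums over dyadic blocks in the Chemin--Lerner norms — this last point is where the third index $r$ (and its conjugate $r'$) must be tracked, but it is handled by the usual Young-type convolution inequality for the dyadic sums and carries no real difficulty.
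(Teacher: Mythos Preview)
Your approach is essentially the paper's: Bony decomposition plus standard paraproduct/remainder continuity in Chemin--Lerner norms. The paper actually uses the two-term variant $gh=\sum_{j'}\bigl(\dot S_{j'}g\,\dot\Delta_{j'}h+\dot\Delta_{j'}g\,\dot S_{j'+1}h\bigr)$, sending the first piece to the first right-hand term and the second to the second, but this is only a cosmetic difference from your three-term version.

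There is, however, a real slip in your treatment of $\dot T_h g$. The ``symmetric'' bound
\[
\|\dot T_h g\|_{\wt L^1_t(\dot B^{\f dp-s}_{p,r})}\lesssim \|h\|_{\wt L^1_t(\dot B^{1+\f dp-s}_{p,r})}\|g\|_{\wt L^\infty_t(\dot B^{-1+\f dp}_{p,r})}
\]
fails when $s<1$, which covers precisely the cases $s=0,\e$ used later. In $\dot T_h g$ the low-frequency factor is $h$; after the Bernstein shift to the $L^\infty$ scale it has regularity $1-s>0$, and the paraproduct continuity theorem requires the low-frequency factor to have \emph{nonpositive} regularity there. Concretely, $\|\dot S_{j'-1}h\|_{L^\infty}\lesssim\sum_{\ell\le j'-2}c_{\ell,r}2^{-\ell(1-s)}$ is dominated by $\ell\to-\infty$ and gives no decay in $j'$. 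Your justification (``one just needs $-1+\f dp<\f dp$\dots'') overlooks this sign condition. Your own ``alternative'' for $\dot T_h g$---placing $h$ in $\wt L^\infty_t(\dot B^{-1+\f dp-s}_{p,r})$ and $g$ in $\wt L^1_t(\dot B^{1+\f dp}_{p,r})$---is correct (the low-frequency regularity is then $-1-s<0$) and is exactly how the paper handles the corresponding piece. Drop the symmetric claim, keep the alternative, and the proof goes through; your $\dot T_g h$ and $\dot R(g,h)$ estimates are fine as written.
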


\begin{proof} Applying Bony's decomposition (\cite{Bo}) gives
\beno \dot{\D}_j(g h)=\sum_{j'\geq
j-N_0}\dot{\D}_j\bigl(\dot{S}_{j'}g\dot{\D}_{j'}
h+\dot{\D}_{j'}g\dot{S}_{j'+1}h\bigr), \eeno from which and Lemma
\ref{lem2.1}, we infer \beno
\begin{split}
\|\dot{\D}_j(gh)\|_{L^1_t(L^p)} \lesssim &\sum_{j'\geq
j-N_0}\bigl(\|\dot{S}_{j'}g\|_{L^\infty_t(L^\infty)}\|\dot{\D}_{j'}h\|_{L^1_t(L^p)}
+\|\dot{\D}_{j'}g\|_{L^1_t(L^p)}\|\dot{S}_{j'+1}h\|_{L^\infty_t(L^\infty)}\bigr)\\
\lesssim &\sum_{j'\geq j-N_0}
c_{j',r}2^{-j'(\frac{d}p-s)}\bigl(\|g\|_{\wt{L}^\infty_t(\dot{B}^{-1+\f{d}p}_{p,r})}\|h\|_{\wt{L}^1_{t}(\dot{B}^{1+\f{d}p-s}_{p,r})}\\
&\qquad\qquad\qquad\qquad\qquad\qquad+\|g\|_{\wt{L}^1_t(\dot{B}^{1+\f{d}p}_{p,r})}\|h\|_{\wt{L}^\infty_t(\dot{B}^{-1+\f{d}p-s}_{p,r})}
\bigr)\\
\lesssim&
c_{j,r}2^{-j(\frac{d}p-s)}\bigl(\|g\|_{\wt{L}^\infty_t(\dot{B}^{-1+\f{d}p}_{p,r})}\|h\|_{\wt{L}^1_{t}(\dot{B}^{1+\f{d}p-s}_{p,r})}
+\|g\|_{\wt{L}^1_t(\dot{B}^{1+\f{d}p}_{p,r})}\|h\|_{\wt{L}^\infty_t(\dot{B}^{-1+\f{d}p-s}_{p,r})}
\bigr),
\end{split}
\eeno where and in what follows, we always denote
$(c_{j,r})_{j\in\Z}$  as a generic element of $\ell^r(\Z)$ so that
$\sum_{j\in\Z}c_{j,r}^r=1.$ Then by virtue of Definition
\ref{chaleur+}, we complete the proof of the lemma.\end{proof}

\begin{lem}\label{lem5.2}
{\sl Let $1\leq p<2d,$ $s\in \bigl(-\f1r,\f{2d}p-1)$ and $f$ be
given by \eqref{5.2}. Then under the assumptions of Proposition
\ref{prop5.1}, one has \beq\label{5.4} \|u^d\na
u^h\|_{\wt{L}^1_t(\dot{B}^{-1+\f{d}p-s}_{p,r})}\lesssim
\|u^h\|_{\wt{L}^1_t(\dot{B}^{1+\f{d}p-s}_{p,r})}^{1-\f1{2r}}\|u^h\|_{\wt{L}^1_{t,f}(\dot{B}^{-1+\f{d}p-s}_{p,r})}^{\f1{2r}},
\eeq and \beq\label{5.4a}\begin{split}
 \|u^d\na u^h\|_{\wt{L}^1_t(\dot{B}^{-1+\f{d}p-s}_{p,r})}\lesssim\|u^h\|_{\wt{L}^\infty_t(\dot{B}^{-1+\f{d}p}_{p,r})}
 \|u^d\|_{\wt{L}^1_t(\dot{B}^{1+\f{d}p-s}_{p,r})}+
 \|u^d\|_{\wt{L}^\infty_t(\dot{B}^{-1+\f{d}p-s}_{p,r})}\|u^h\|_{\wt{L}^1_t(\dot{B}^{1+\f{d}p}_{p,r})}.\end{split}\eeq}
\end{lem}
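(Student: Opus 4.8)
The plan is to establish both \eqref{5.4} and \eqref{5.4a} from Bony's paraproduct decomposition
\[
u^d\na u^h=T_{u^d}\na u^h+T_{\na u^h}u^d+R(u^d,\na u^h),
\]
treating each of the three pieces at the level of the dyadic blocks $\dot\Delta_j$ with the Bernstein inequalities of Lemma \ref{lem2.1}, and only afterwards summing in time and over $j$. Working block by block (rather than first estimating $\|u^d\na u^h\|_{L^1_t(\dot B^{-1+\f dp-s}_{p,r})}$) is what allows the Chemin--Lerner norms, which are strictly smaller than the corresponding $L^1_t(\dot B^\bullet_{p,r})$ norms, to appear on the right-hand sides.

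For \eqref{5.4a} I would argue just as in the proof of Lemma \ref{lem5.1}. In $T_{u^d}\na u^h$ one bounds $\|\dot S_{k-1}u^d(t')\|_{L^\infty}\lesssim 2^{k(1+s)}\|u^d(t')\|_{\dot B^{-1+\f dp-s}_{p,r}}$ (Bernstein plus a geometric series, convergent since $1+s>0$), pairs it with $\|\dot\Delta_k\na u^h\|_{L^p}\lesssim 2^k\|\dot\Delta_k u^h\|_{L^p}$, and puts $L^\infty_t$ on $u^d$ and $L^1_t$ on $u^h$; after $\ell^r$-summation this yields $\|u^d\|_{\wt L^\infty_t(\dot B^{-1+\f dp-s}_{p,r})}\|u^h\|_{\wt L^1_t(\dot B^{1+\f dp}_{p,r})}$. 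The symmetric treatment of $T_{\na u^h}u^d$, with $u^h$ at regularity $-1+\f dp$ in the low-frequency factor, produces $\|u^h\|_{\wt L^\infty_t(\dot B^{-1+\f dp}_{p,r})}\|u^d\|_{\wt L^1_t(\dot B^{1+\f dp-s}_{p,r})}$. For the remainder one uses $\|\dot\Delta_j R(u^d,\na u^h)\|_{L^p}\lesssim 2^{jd/p}\sum_{k\ge j-N_0}\|\dot\Delta_k u^d\|_{L^p}\|\widetilde{\dot\Delta}_k\na u^h\|_{L^p}$: taking $u^d$ at $-1+\f dp-s$ and $u^h$ at $1+\f dp$, the generic term carries the factor $2^{(j-k)(-1+\f{2d}p-s)}$, so the series over $k\ge j-N_0$ converges \emph{precisely when} $s<\f{2d}p-1$ and, after a Young/H\"older summation in $j$, reproduces the second term of \eqref{5.4a}. (For $1\le p<2$ the exponent $p/2$ in this step is replaced by a pair of exponents close to $1,\infty$ at the cost of one extra Bernstein factor, which is harmless since $p<2d$.)

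For \eqref{5.4} I would use the same three pieces, now routing every occurrence of $u^d$ through $f$: since $\|u^d(t')\|_{\dot B^{-1+\f dp+1/r}_{p,r}}=f(t')^{1/(2r)}$, Bernstein gives $\|\dot S_{k-1}u^d(t')\|_{L^\infty}\lesssim 2^{k(1-1/r)}f(t')^{1/(2r)}$ (geometric series convergent since $r>1$), whence, for $T_{u^d}\na u^h$,
\[
2^{j(-1+\f dp-s)}\!\!\int_0^t\!\!\|\dot\Delta_j T_{u^d}\na u^h(t')\|_{L^p}\,dt'\lesssim\!\!\!\sum_{|k-j|\le N_0}\!\!\!2^{k(1-\f1r+\f dp-s)}\!\!\int_0^t\!\!\! f(t')^{\frac1{2r}}\|\dot\Delta_k u^h(t')\|_{L^p}\,dt',
\]
and analogous inequalities for $T_{\na u^h}u^d$ and $R$ (the lower bound $s>-\f1r$ being exactly what makes the relevant geometric series summable in the low--high term, and $s<\f{2d}p-1$ what does so in the remainder). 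One then writes, under the time integral, $f(t')^{\frac1{2r}}\|\dot\Delta_k u^h(t')\|_{L^p}=\bigl(f(t')\|\dot\Delta_k u^h(t')\|_{L^p}\bigr)^{\frac1{2r}}\bigl(\|\dot\Delta_k u^h(t')\|_{L^p}\bigr)^{1-\frac1{2r}}$ and applies H\"older in $t'$ with exponents $2r$ and $\f{2r}{2r-1}$; using the elementary identity
\[
1-\tfrac1r+\tfrac dp-s=\tfrac1{2r}\bigl(-1+\tfrac dp-s\bigr)+\bigl(1-\tfrac1{2r}\bigr)\bigl(1+\tfrac dp-s\bigr)
\]
to split the dyadic weight $2^{k(1-\f1r+\f dp-s)}$, and finally a H\"older for sequences in $j$ (exponents $2r$, $\f{2r}{2r-1}$, with $k\sim j$), the two $u^h$-factors reassemble into $\|u^h\|_{\wt L^1_{t,f}(\dot B^{-1+\f dp-s}_{p,r})}^{1/(2r)}$ and $\|u^h\|_{\wt L^1_t(\dot B^{1+\f dp-s}_{p,r})}^{1-1/(2r)}$, which is \eqref{5.4}.

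The part I expect to be the main obstacle is keeping the Chemin--Lerner bookkeeping consistent through the simultaneous time--H\"older and $\ell^r$-summation: in particular, in the low--high term $T_{\na u^h}u^d$ the output frequency is carried by $u^d$, which at the same time must be absorbed entirely into $f$, so one has to be careful not to lose the $\ell^r$ index of $u^h$. Bounding the low-frequency factors in $\ell^\infty$ (so that no time-dependent summation coefficient survives), performing the H\"older in time only on the individual blocks $\dot\Delta_j$, and checking that each geometric series is summable exactly on the range $-\f1r<s<\f{2d}p-1$ of the statement should make all three contributions go through with no loss.
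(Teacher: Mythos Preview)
Your proposal is correct and follows essentially the same route as the paper: Bony's decomposition, blockwise Bernstein estimates, H\"older in time with exponents $2r$ and $\tfrac{2r}{2r-1}$ to produce the split $\|u^h\|_{\wt L^1_{t,f}}^{1/(2r)}\|u^h\|_{\wt L^1_t}^{1-1/(2r)}$, and the conditions $s>-\tfrac1r$ and $s<\tfrac{2d}p-1$ used exactly where you indicate (with the $1\le p<2$ modification for the remainder). One small correction to your last paragraph: in the piece $T_{\na u^h}u^d$ it is the \emph{high}-frequency block $\dot\Delta_{j'}u^d$ that is controlled in $\ell^\infty$ via $\|\dot\Delta_{j'}u^d(t')\|_{L^p}\lesssim 2^{-j'(-1+\f dp+\f1r)}f(t')^{1/(2r)}$, while the $\ell^r$ structure is retained on the low-frequency expansion $\dot S_{j'-1}\na u^h=\sum_{\ell\le j'-2}\dot\Delta_\ell\na u^h$; the sum over $\ell$ then converges precisely because $s>-\tfrac1r$.
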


\begin{proof} We first get, by applying Bony's decomposition \eqref{bony}, that
\beq\label{5.5} u^d\na u^h=\dot{T}_{u^d}\na u^h+\dot{T}_{\na
u^h}u^d+\dot{R}(u^d,\na u^h). \eeq Applying Lemma \ref{lem2.1} gives
\beno
\begin{split}
\|\dot{\D}_j\bigl(T_{u^d}\na u^h\bigr)(t')\|_{L^p}\lesssim
&\sum_{|j'-j|\leq
5}2^{j'}\|\dot{S}_{j'-1}u^d(t')\|_{L^\infty}\|\dot{\D}_{j'}u^h(t')\|_{L^p}\\
\lesssim&\sum_{|j'-j|\leq
5}2^{j'(2-\f1r)}\|u^d(t')\|_{\dot{B}^{-1+\f{d}p+\f1r}_{p,r}}\|\dot{\D}_{j'}u^h(t')\|_{L^p},
\end{split}
\eeno integrating the above inequality over $[0,t]$ and using
Definition \ref{defa.3}, one has \beno
\begin{split}
\|\dot{\D}_j&\bigl(\dot{T}_{u^d}\na
u^h\bigr)\|_{L^1_t(L^p)}\\
\lesssim &\sum_{|j'-j|\leq
5}2^{j'(2-\f1r)}\Bigl\{\int_0^t\|u^d(t')\|_{\dot{B}^{-1+\f{d}p+\f1r}_{p,r}}^{2r}
\|\dot{\D}_{j'}u^h(t')\|_{L^p}\,dt'\Bigr\}^{\f1{2r}}\|\dot{\D}_{j'}u^h(t')\|_{L^1_t(L^p)}^{1-\f1{2r}}\\
\lesssim
&c_{j,r}2^{-j(-1+\f{d}p-s)}\|u^h\|_{\wt{L}^1_t(\dot{B}^{1+\f{d}p-s}_{p,r})}^{1-\f1{2r}}\|u^h\|_{\wt{L}^1_{t,f}(\dot{B}^{-1+\f{d}p-s}_{p,r})}^{\f1{2r}}.
\end{split}
\eeno It follows from the same line that  \beno
\begin{split}
\|\dot{\D}_j\bigl(\dot{T}_{\na u^h}u^d\bigr)(t')\|_{L^p}\lesssim&
\sum_{|j'-j|\leq
5}\|\dot{S}_{j'-1}\na u^h(t')\|_{L^\infty}\|\dot{\D}_{j'}u^d(t')\|_{L^p}\\
\lesssim &2^{-j(-1+\f{d}p+\f1r)}\sum_{\ell\leq
j+4}2^{\ell(1+\f{d}p)}\|u^d(t')\|_{\dot{B}^{-1+\f{d}p+\f1r}_{p,r}}\|\dot{\D}_\ell
u^h(t')\|_{L^p},
\end{split}
\eeno from which and $s>-\f1r,$ we infer \beno
\begin{split}
\|\dot{\D}_j&\bigl(\dot{T}_{\na
u^h}u^d\bigr)\|_{L^1_t(L^p)}\\
\lesssim& 2^{-j(-1+\f{d}p+\f1r)}\sum_{\ell\leq
j+4}2^{\ell(1+\f{d}p)}\Bigl\{\int_0^t\|u^d(t')\|_{\dot{B}^{-1+\f{d}p+\f1r}_{p,r}}^{2r}\|\dot{\D}_{\ell}u^h(t')\|_{L^p}\,dt'\Bigr\}^{\f1{2r}}\|\dot{\D}_{\ell}u^h(t')\|_{L^1_t(L^p)}^{1-\f1{2r}}\\
\lesssim &
c_{j,r}2^{-j(-1+\f{d}p-s)}\|u^h\|_{\wt{L}^1_t(\dot{B}^{1+\f{d}p-s}_{p,r})}^{1-\f1{2r}}\|u^h\|_{\wt{L}^1_{t,f}(\dot{B}^{-1+\f{d}p-s}_{p,r})}^{\f1{2r}}.
\end{split}
\eeno Finally, we deal with the remaining term in \eqref{5.5}.
Firstly for $2\leq p<2d,$ as $s<\f{2d}p-1,$  we get, by applying
Lemma \ref{lem2.1}, that \beno
\begin{split}
\|\dot{\D}_j\bigl(\dot{R}(u^d,\na u^h)\bigr)\|_{L^1_t(L^p)} \lesssim
&2^{j\f{d}p}\sum_{j'\geq
j-N_0}2^{j'}\int_0^t\|\wt{\dot{\D}}_{j'}u^d(t')\|_{L^p}\|\dot{\D}_{j'}u^h(t')\|_{L^p}\,dt'\\
\lesssim &2^{j\f{d}p}\sum_{j'\geq
j-N_0}2^{-j'(-2+\frac{d}p+\f1r)}\int_0^t\|u^d(t')\|_{\dot{B}^{-1+\f{d}p+\f1r}_{p,r}}\|\dot{\D}_{j'}u^h(t')\|_{L^p}\,dt'\\
\lesssim &
c_{j,r}2^{-j(-1+\f{d}p-s)}\|u^h\|_{\wt{L}^1_t(\dot{B}^{1+\f{d}p-s}_{p,r})}^{1-\f1{2r}}\|u^h\|_{\wt{L}^1_{t,f}(\dot{B}^{-1+\f{d}p-s}_{p,r})}^{\f1{2r}}.
\end{split}
\eeno
 For $1\leq p< 2,$ we get, by applying Lemma \ref{lem2.1} once again, that
\beq\label{5.5a}
\begin{split}
\|\dot{\D}_j\bigl(\dot{R}&(u^d,\na u^h)\bigr)\|_{L^1_t(L^p)}\\
\lesssim &2^{jd(1-\f1p)}\sum_{j'\geq
j-N_0}2^{j'}\int_0^t\|\wt{\dot{\D}}_{j'}u^d(t')\|_{L^{\f{p}{p-1}}}\|\dot{\D}_{j'}u^h(t')\|_{L^p}\,dt'\\
\lesssim &2^{jd(1-\f1p)}\sum_{j'\geq
j-N_0}2^{j'(2+\f{d}p-d-\f1r)}\int_0^t\|u^d(t')\|_{\dot{B}^{-1+\f{d}p+\f1r}_{p,r}}\|\dot{\D}_{j'}u^h(t')\|_{L^p}\,dt'\\
\lesssim &
c_{j,r}2^{-j(-1+\f{d}p-s)}\|u^h\|_{\wt{L}^1_t(\dot{B}^{1+\f{d}p-s}_{p,r})}^{1-\f1{2r}}\|u^h\|_{\wt{L}^1_{t,f}(\dot{B}^{-1+\f{d}p-s}_{p,r})}^{\f1{2r}}.
\end{split}
\eeq Whence thanks to \eqref{5.5}, we finish the proof of
\eqref{5.4}.

On the other hand, it is easy to observe that \beno\begin{split}
\|&\dot{\D}_j(\dot{T}_{u^d}\na u^h+\dot{T}_{\na
u^h}u^d)\|_{L^1_t(L^p)}\\
&\lesssim \sum_{|j'-j|\leq
5}(\|\dot{S}_{j'-1}u^d\|_{L^\infty_t(L^\infty)}\|\dot{\D}_{j'}\na
u^h\|_{L^1_t(L^p)}+ \|\dot{S}_{j'-1}\na
u^h\|_{L^\infty_t(L^\infty)}\|\dot{\D}_{j'}u^d\|_{L^1_t(L^p)})\\
&\lesssim
c_{j,r}2^{-j(-1+\f{d}p-s)}\bigl(\|u^h\|_{\wt{L}^\infty_t(\dot{B}^{-1+\f{d}p}_{p,r})}\|u^d\|_{\wt{L}^1_t(\dot{B}^{1+\f{d}p-s}_{p,r})}+\|u^d\|_{\wt{L}^\infty_t(\dot{B}^{-1+\f{d}p-s}_{p,r})}\|u^h\|_{\wt{L}^1_t(\dot{B}^{1+\f{d}p}_{p,r})}\bigr).
\end{split}\eeno
Whereas  as $s<\f{2d}p-1,$ for $2\leq p<2d$, we get, by applying
Lemma \ref{lem2.1}, that\beno
\begin{split}
\|\dot{\D}_j\bigl(\dot{R}(u^d,\na u^h)\bigr)\|_{L^1_t(L^p)} \lesssim
&2^{j\f{d}p}\sum_{j'\geq
j-N_0}2^{j'}\|\dot{\D}_{j'}u^d\|_{L^1_t(L^p)}\|\wt{\dot{\D}}_{j'}u^h\|_{L^\infty_t(L^p)}\\
\lesssim
&c_{j,r}2^{-j(-1+\f{d}p-s)}\|u^h\|_{\wt{L}^\infty_t(\dot{B}^{-1+\f{d}p}_{p,r})}\|u^d\|_{\wt{L}^1_t(\dot{B}^{1+\f{d}p-s}_{p,r})}.
\end{split}
\eeno Along the same line to the proof of \eqref{5.5a}, we can prove
the same estimate holds  for $1\leq p<2$. This proves \eqref{5.4a}
and Lemma \ref{lem5.2}.
\end{proof}

\begin{lem}\label{lem5.3}
{\sl Let $1<q\leq p<2d$, $s<\f{d}p+\f{d}q-1$ and $g\in
\wt{L}^1_T(\dot{B}^{-1+\f{d}p-s}_{p,r}).$ Then under the assumptions
of Proposition \ref{prop5.1}, one has \beno
\|ag\|_{\wt{L}^1_t(\dot{B}^{-1+\f{d}p-s}_{p,r})}\lesssim
\|a\|_{L^\infty_t(L^\infty)\cap\wt{L}^\infty_t(
B^{\f{d}q}_{q,\infty})}\|g\|_{\wt{L}^1_t(\dot{B}^{-1+\f{d}p-s}_{p,r})}.
\eeno }
\end{lem}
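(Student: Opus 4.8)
The plan is to prove Lemma \ref{lem5.3} by Bony's paraproduct decomposition applied to the product $ag$ in each dyadic block $\dot\Delta_j(ag)$, exactly as in the proof of Lemma \ref{lem5.1} but now exploiting that $a$ lives in $L^\infty\cap\widetilde L^\infty_t(B^{d/q}_{q,\infty})$ rather than in a Chemin--Lerner Besov space with summable third index. Write $\dot\Delta_j(ag)=\dot\Delta_j(\dot T_a g)+\dot\Delta_j(\dot T_g a)+\dot\Delta_j(\dot R(a,g))$. The first term is the easy one: by quasi-orthogonality only $|j'-j|\le 5$ contributes, and $\|\dot S_{j'-1}a\|_{L^\infty_t(L^\infty)}\le\|a\|_{L^\infty_t(L^\infty)}$, so that $\|\dot\Delta_j(\dot T_a g)\|_{L^1_t(L^p)}\lesssim\sum_{|j'-j|\le 5}\|a\|_{L^\infty_t(L^\infty)}\|\dot\Delta_{j'}g\|_{L^1_t(L^p)}\lesssim c_{j,r}2^{-j(-1+d/p-s)}\|a\|_{L^\infty_t(L^\infty)}\|g\|_{\widetilde L^1_t(\dot B^{-1+d/p-s}_{p,r})}$, where $(c_{j,r})_{j\in\ZZ}$ is the generic $\ell^r$-sequence fixed in the Notation paragraph. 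The remainder term $\dot R(a,g)$ is handled similarly: one estimates $\|\dot\Delta_{j'}a\|_{L^\infty}\lesssim\|a\|_{L^\infty_t(L^\infty)}$ (low-regularity side) and sums $\sum_{j'\ge j-N_0}$, again using $s<d/p+d/q-1$, actually only $s<d/p$ suffices here since $a\in L^\infty$, to make the geometric series converge and recover the factor $2^{-j(-1+d/p-s)}c_{j,r}$ from the $g$-norm.

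The genuinely delicate term is $\dot T_g a$, because here the high frequencies of $a$ are activated and $a$ only belongs to $B^{d/q}_{q,\infty}$, an endpoint space with $\infty$ as third index. One writes $\|\dot\Delta_j(\dot T_g a)\|_{L^1_t(L^p)}\lesssim\sum_{|j'-j|\le 5}\|\dot S_{j'-1}g\|_{L^1_t(L^{p_0})}\|\dot\Delta_{j'}a\|_{L^\infty_t(L^{q})}$ after choosing Lebesgue exponents via Bernstein so that the $L^p$ norm of the product is controlled: since $g\in\widetilde L^1_t(\dot B^{-1+d/p-s}_{p,r})$, one gains summability in the low-frequency sum $\dot S_{j'-1}g=\sum_{\ell\le j'-2}\dot\Delta_\ell g$ by Bernstein's inequality (raising the integrability of $g$ at the cost of $\ell$-powers), and pairs it against $\|\dot\Delta_{j'}a\|_{L^\infty_t(L^q)}\lesssim c'_{j'}2^{-j'd/q}\|a\|_{\widetilde L^\infty_t(B^{d/q}_{q,\infty})}$ with $\sup_{j'}c'_{j'}\le 1$. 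The point is that even though the third index of $B^{d/q}_{q,\infty}$ is $\infty$, the paraproduct $\dot T_g a$ inherits the $\ell^r$ summability from $g$ alone: the sum over $|j'-j|\le 5$ is finite, and the gain $2^{-j(-1+d/p-s)}c_{j,r}$ comes entirely from the $\widetilde L^1_t(\dot B^{-1+d/p-s}_{p,r})$ norm of $g$ via the low-frequency part $\dot S_{j'-1}g$. One must check the constraint on $s$: the Bernstein step requires the exponent on the inner sum $\sum_{\ell\le j'-2}2^{\ell\sigma}$ to be positive, which translates into $-1+d/p-s+d/q>0$, i.e.\ $s<d/p+d/q-1$, precisely the hypothesis. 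This is the step I expect to be the main obstacle, and it is where the restriction $q\le p$ and $s<d/p+d/q-1$ are used.

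Summing the three contributions over $j$ in $\ell^r(\ZZ)$ and invoking Definition \ref{chaleur+} of the norm $\|\cdot\|_{\widetilde L^1_t(\dot B^{-1+d/p-s}_{p,r})}$, we obtain
\[
\|ag\|_{\widetilde L^1_t(\dot B^{-1+d/p-s}_{p,r})}\lesssim\bigl(\|a\|_{L^\infty_t(L^\infty)}+\|a\|_{\widetilde L^\infty_t(B^{d/q}_{q,\infty})}\bigr)\|g\|_{\widetilde L^1_t(\dot B^{-1+d/p-s}_{p,r})},
\]
which is the claimed inequality after absorbing both $a$-norms into $\|a\|_{L^\infty_t(L^\infty)\cap\widetilde L^\infty_t(B^{d/q}_{q,\infty})}$. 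Throughout, the only external inputs are the Bernstein--type inequalities collected in Lemma \ref{lem2.1}, the support properties of Littlewood--Paley blocks, and the definitions of the Chemin--Lerner spaces in the appendix; no Gronwall-type argument is needed since the estimate is purely bilinear and static in time.
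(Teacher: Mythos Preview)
Your decomposition and the treatment of $\dot T_a g$ are fine, but the roles you assign to the two norms of $a$ in the remaining terms are reversed, and this creates a genuine gap in the remainder estimate.

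For $\dot R(a,g)$ you propose to use only $\|\dot\Delta_{j'}a\|_{L^\infty}\lesssim\|a\|_{L^\infty_t(L^\infty)}$. This gives
\[
\|\dot\Delta_j(\dot R(a,g))\|_{L^1_t(L^p)}\lesssim\|a\|_{L^\infty_t(L^\infty)}\sum_{j'\ge j-N_0}c_{j',r}\,2^{-j'(-1+\frac dp-s)}\|g\|_{\widetilde L^1_t(\dot B^{-1+d/p-s}_{p,r})},
\]
and since the product $\dot\Delta_{j'}a\cdot\widetilde{\dot\Delta}_{j'}g$ already sits in $L^p$ (with spectrum in a \emph{ball} of radius $\sim 2^{j'}$), there is no Bernstein gain available from $\dot\Delta_j$. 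The sum then converges only if $-1+\tfrac dp-s>0$, i.e.\ $s<\tfrac dp-1$, not $s<\tfrac dp$ as you write. This is strictly stronger than the hypothesis $s<\tfrac dp+\tfrac dq-1$, and in particular it fails for every $p\in[d,2d)$ already at $s=0$, which is precisely one of the values needed in Proposition~\ref{prop5.1}.

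The paper's proof instead uses the Besov regularity of $a$ in the remainder: estimating $\|\widetilde{\dot\Delta}_{j'}a\|_{L^q}\lesssim 2^{-j'd/q}\|a\|_{\widetilde L^\infty_t(B^{d/q}_{q,\infty})}$, placing the product in $L^{pq/(p+q)}$ (or $L^1$ when $\tfrac1p+\tfrac1q>1$), and recovering $L^p$ via Bernstein with a gain $2^{jd/q}$. The extra factor $2^{-j'd/q}$ is exactly what makes the tail sum converge under the stated hypothesis $s<\tfrac dp+\tfrac dq-1$. Conversely, the paraproduct $\dot T_g a$ does \emph{not} require this hypothesis: once you push $\dot S_{j'-1}g$ into $L^\infty$ by Bernstein, the low-frequency sum $\sum_{\ell\le j'-2}c_{\ell,r}2^{\ell(1+s)}$ converges under the mild condition $s>-1$. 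So the constraint $s<\tfrac dp+\tfrac dq-1$ is dictated by $\dot R(a,g)$, not by $\dot T_g a$.
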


\begin{proof}
Again thanks to Bony's decomposition \eqref{bony}, we have
\beq\label{5.6} ag=\dot{T}_a g +\dot{T}_g a + \dot{R}(a,g). \eeq
Applying Lemma \ref{lem2.1} gives \beno
\begin{split}\|\dot{\D}_j(\dot{T}_a g)\|_{L^1_t(L^p)}\lesssim&
\sum_{|j'-j|\leq
5}\|\dot{S}_{j'-1}a\|_{L^\infty_t(L^\infty)}\|\dot{\D}_{j'}g\|_{L^1_t(L^p)}\\
\lesssim&
c_{j,r}2^{-j(-1+\f{d}p-s)}\|a\|_{L^\infty_t(L^\infty)}\|g\|_{\wt{L}^1_t(\dot{B}^{-1+\f{d}p-s}_{p,r})}.
\end{split}
\eeno  While as $p\geq q$, applying Lemma \ref{lem2.1} once again
gives rise to \beno
\begin{split}
\|\dot{\D}_j(\dot{T}_g a)\|_{L^1_t(L^p)} \lesssim& \sum_{|j'-j|\leq
5}\|\dot{S}_{j'-1}g\|_{L^1_t(L^\infty)}\|\dot{\D}_{j'}a\|_{L^\infty_t(L^p)}\\
\lesssim&c_{j,r}2^{-j(-1+\f{d}p-s)}\|a\|_{\wt{L}^\infty_t(B^{\f{d}q}_{q,\infty})}\|g\|_{\wt{L}^1_t(\dot{B}^{-1+\f{d}p-s}_{p,r})}.
\end{split}
\eeno Finally as $s<\f{d}p+\f{d}q-1,$ for $\f1p+\f1q \leq 1$, we
get, by applying Lemma \ref{lem2.1}, that \beno
\begin{split}
\|\dot{\D}_j(\dot{R}(a,g))\|_{L^1_t(L^p)} \lesssim&
2^{j\f{d}{q}}\sum_{j'\geq
j-N_0}\|\wt{\dot{\D}}_{j'}a\|_{L^\infty_t(L^q)}\|\dot{\D}_{j'}g\|_{L^1_t(L^p)}\\
\lesssim& 2^{j\f{d}{q}}\sum_{j'\geq j-N_0}
c_{j',r}2^{j'(1-\f{d}p-\f{d}q+s)}\|a\|_{\wt{L}^\infty_t(B^{\f{d}q}_{q,\infty})}\|g\|_{\wt{L}^1_t(\dot{B}^{-1+\f{d}p-s}_{p,r})}\\
\lesssim&c_{j,r}2^{-j(-1+\f{d}p-s)}\|a\|_{\wt{L}^\infty_t(B^{\f{d}q}_{q,\infty})}\|g\|_{\wt{L}^1_t(\dot{B}^{-1+\f{d}p-s}_{p,r})}.
\end{split}
\eeno For the case when $\f1p+\f1q >1$, we have  \beno
\begin{split}
\|\dot{\D}_j(\dot{R}(a,g))\|_{L^1_t(L^p)} \lesssim&
2^{jd(1-\f1p)}\sum_{j'\geq
j-N_0}\|\wt{\dot{\D}}_{j'}a\|_{L^\infty_t(L^{\f{p}{p-1}})}\|\dot{\D}_{j'}g\|_{L^1_t(L^p)}\\
\lesssim& 2^{jd(1-\f1p)}\sum_{j'\geq j-N_0}c_{j',r}2^{j'(1-d+s)}\|a\|_{\wt{L}^\infty_t(B^{\f{d}q}_{q,\infty})}\|g\|_{\wt{L}^1_t(\dot{B}^{-1+\f{d}p-s}_{p,r})}\\
\lesssim&c_{j,r}2^{-j(-1+\f{d}p-s)}\|a\|_{\wt{L}^\infty_t(B^{\f{d}q}_{q,\infty})}\|g\|_{\wt{L}^1_t(\dot{B}^{-1+\f{d}p-s}_{p,r})}.
\end{split}
\eeno Whence thanks to \eqref{5.6}, we prove Lemma \ref{lem5.3}.
\end{proof}

We now turn to the proof of Proposition
\ref{prop5.1}.\\

\no{\bf Proof of Proposition \ref{prop5.1}.}\ Again as both the
proof of the existence and uniqueness of solutions to \eqref{5.1}
essentially follows from the estimates \eqref{5.3} for some
appropriate approximate solutions. For the sake of simplicity, we
just prove \eqref{5.3} for smooth enough solutions of \eqref{5.1}.
Indeed thanks to \eqref{5.1} and \eqref{5.2}, we write \beno
\begin{split}
\na\Pi_{\la}=\na(-\D)^{-1}\bigl[&\dive(a\na\Pi_{\la})+\dive_h\dive_h(u^h\otimes
u^h_{\la})+2\dive_h(u^d\p_du^h_{\la})-3\pa_d(u^d\dive_hu^h_{\la})\\
&-\dive_h(u^h\dive_hu^h_{\la})-\mu\dive_h(a\D u^h_{\la})-\mu\p_d(a\D
u^d_{\la})\bigr].
\end{split} \eeno Acting
 $\dot{\D}_j$ to the above equation and using Lemma \ref{lem2.1} leads
to \beq\label{5.7}
\begin{split}
\|\dot{\D}_j(\na\Pi_{\la})\|_{L^1_t(L^p)}\lesssim&
\|\dot{\D}_j(a\na\Pi_{\la})\|_{L^1_t(L^p)}+2^j\|\dot{\D}_j(u^h\otimes
u^h_{\la})\|_{L^1_t(L^p)}+\|\dot{\D}_j(u^h\dive_h
u^h_{\la})\|_{L^1_t(L^p)}\\
&+\|\dot{\D}_j(u^d\na u^h_{\la})\|_{L^1_t(L^p)}+\mu\|\dot{\D}_j(a\D
u^h_{\la})\|_{L^1_t(L^p)}+\mu\|\dot{\D}_j(a\D
u^d_{\la})\|_{L^1_t(L^p)},
\end{split}
\eeq  from which, and Lemma \ref{lem5.1} to Lemma \ref{lem5.3}, we
deduce that for $s=0$ and $\e$ \beno
\begin{split}
\|&\dot{\D}_j(\na\Pi_{\la})\|_{L^1_t(L^p)}\lesssim
c_{j,r}2^{-j(-1+\f{d}p-s)}\Bigl\{\|a\|_{L^\infty_t(L^\infty)\cap\wt{L}^\infty_t(
B^{\f{d}q}_{q,\infty})}\bigl(\mu\|u^h_{\la}\|_{\wt{L}^1_t(\dot{B}^{1+\f{d}p-s}_{p,r})}+\mu
\|u^d\|_{\wt{L}^1_t(\dot{B}^{1+\f{d}p-s}_{p,r})}\\
&+\|\na
\Pi_{\la}\|_{\wt{L}^1_t(\dot{B}^{-1+\f{d}p-s}_{p,r})}\bigr)+\|u^h\|_{\wt{L}^\infty_t(\dot{B}^{-1+\f{d}p}_{p,r})}\|u^h_{\la}\|_{\wt{L}^1_t(\dot{B}^{1+\f{d}p-s}_{p,r})}
+\|u^h_{\la}\|_{\wt{L}^1_t(\dot{B}^{1+\f{d}p-s}_{p,r})}^{1-\f1{2r}}\|u^h_{\la}\|_{\wt{L}^1_{t,f}(\dot{B}^{-1+\f{d}p-s}_{p,r})}^{\f1{2r}}\Bigr\}.
\end{split}
\eeno Therefore  \eqref{5.3} follows as long as
$C\|a\|_{L^\infty_T(L^\infty)\cap\wt{L}^\infty_T(
B^{\f{d}q}_{q,\infty})}\leq \f12.$ This finishes the proof of
Proposition \ref{prop5.1}. \ef

To deal with the estimate of $u^d,$ we also need the following
proposition:

\begin{prop}\label{prop5.2}
{\sl Under the assumptions of Proposition \ref{prop5.1}, one has for
$s=0$ and $\e$ \beq \label{5.8}
\begin{split}
\|\na \Pi&\|_{\wt{L}^1_t(\dot{B}^{-1+\f{d}p-s}_{p,r})}\leq
\frac{C}{1-C\|a\|_{L^\infty_t(L^\infty)\cap\wt{L}^\infty_t(
B^{\f{d}q}_{q,\infty})}}\Bigl\{\|u^h\|_{\wt{L}^1_t(\dot{B}^{1+\f{d}p}_{p,r})}
\|u^d\|_{\wt{L}^\infty_t(\dot{B}^{-1+\f{d}p-s}_{p,r})}\\
&+\bigl(\mu \|a\|_{L^\infty_t(L^\infty)\cap\wt{L}^\infty_t(
B^{\f{d}q}_{q,\infty})}
+\|u^h\|_{\wt{L}^\infty_t(\dot{B}^{-1+\f{d}p}_{p,r})}\bigr)
\bigl(\|u^h\|_{\wt{L}^1_t(\dot{B}^{1+\f{d}p-s}_{p,r})}+\|u^d\|_{\wt{L}^1_t(\dot{B}^{1+\f{d}p-s}_{p,r})}\bigr)\Bigr\}
\end{split}
\eeq for $t\leq T$ provided that
$C\|a\|_{L^\infty_T(L^\infty)\cap\wt{L}^\infty_T(
B^{\f{d}q}_{q,\infty})}\leq \f12.$}
\end{prop}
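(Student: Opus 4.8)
The plan is to mimic, step for step, the proof of Proposition \ref{prop5.1}. Since that proposition already furnishes the unique solution $\na\Pi$ of \eqref{5.1} in $\wt{L}^1_T(\dot{B}^{-1+\f{d}p-\e}_{p,r})\cap\wt{L}^1_T(\dot{B}^{-1+\f{d}p}_{p,r})$, it remains only to prove the a priori bound \eqref{5.8}, and — just as in Proposition \ref{prop5.1} — by the usual approximation procedure it is enough to carry out the estimate for smooth enough solutions of \eqref{5.1}; the finiteness of $\|\na\Pi\|_{\wt{L}^1_t(\dot{B}^{-1+\f{d}p-s}_{p,r})}$ coming from Proposition \ref{prop5.1} is what will make the absorption argument below legitimate. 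The single substantive change relative to Proposition \ref{prop5.1} is that the contribution of $u^d\na u^h$ will be estimated through \eqref{5.4a} of Lemma \ref{lem5.2} rather than through the weighted-norm bound \eqref{5.4}.

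Concretely, I would first invert the Laplacian in \eqref{5.1}, writing
\[
\na\Pi=\na(-\D)^{-1}\bigl[\dive(a\na\Pi)+\dive_h\dive_h(u^h\otimes u^h)+2\dive_h(u^d\pa_du^h)-3\pa_d(u^d\dive_hu^h)-\dive_h(u^h\dive_hu^h)-\mu\dive(a\D u)\bigr],
\]
then apply $\dot{\D}_j$, use that $\na(-\D)^{-1}\dive$ is a zero-order homogeneous Fourier multiplier commuting with $\dot{\D}_j$ and bounded on $L^p$, and invoke Lemma \ref{lem2.1} (with one extra factor $2^j$ for the double-divergence term $\dive_h\dive_h(u^h\otimes u^h)$); this reproduces exactly the frequency-localized inequality \eqref{5.7} with $u_\la$ replaced by $u$, no exponential weight being needed here. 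Next I would estimate the six terms on the right-hand side of \eqref{5.7}. The term $\|\dot{\D}_j(a\na\Pi)\|_{L^1_t(L^p)}$ is controlled by Lemma \ref{lem5.3} with $g=\na\Pi$, producing a contribution $C\|a\|_{L^\infty_t(L^\infty)\cap\wt{L}^\infty_t(B^{\f{d}q}_{q,\infty})}\|\na\Pi\|_{\wt{L}^1_t(\dot{B}^{-1+\f{d}p-s}_{p,r})}$ that is moved to the left-hand side using $C\|a\|_{L^\infty_t(L^\infty)\cap\wt{L}^\infty_t(B^{\f{d}q}_{q,\infty})}\le\f12$. The quadratic terms $\dive_h\dive_h(u^h\otimes u^h)$ and $\dive_h(u^h\dive_hu^h)$ are handled by the product estimate of Lemma \ref{lem5.1}, distributing the derivatives so that one factor of $u^h$ sits in the scaling-critical norm $\wt{L}^\infty_t(\dot{B}^{-1+\f{d}p}_{p,r})$ and the other in $\wt{L}^1_t(\dot{B}^{1+\f{d}p-s}_{p,r})$ (or in $\wt{L}^1_t(\dot{B}^{1+\f{d}p}_{p,r})$). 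Finally, the two terms $\mu\dive_h(a\D u^h)$ and $\mu\pa_d(a\D u^d)$ are again estimated by Lemma \ref{lem5.3}, now with $g=\D u^h$, resp. $g=\D u^d$, which lie in $\dot{B}^{-1+\f{d}p-s}_{p,r}$ because $u^h,u^d\in\dot{B}^{1+\f{d}p-s}_{p,r}$; this gives $\mu\|a\|_{L^\infty_t(L^\infty)\cap\wt{L}^\infty_t(B^{\f{d}q}_{q,\infty})}\bigl(\|u^h\|_{\wt{L}^1_t(\dot{B}^{1+\f{d}p-s}_{p,r})}+\|u^d\|_{\wt{L}^1_t(\dot{B}^{1+\f{d}p-s}_{p,r})}\bigr)$.

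The decisive term is $\|\dot{\D}_j(u^d\na u^h)\|_{L^1_t(L^p)}$: here I would apply \eqref{5.4a} of Lemma \ref{lem5.2}, namely
\[
\|u^d\na u^h\|_{\wt{L}^1_t(\dot{B}^{-1+\f{d}p-s}_{p,r})}\lesssim\|u^h\|_{\wt{L}^\infty_t(\dot{B}^{-1+\f{d}p}_{p,r})}\|u^d\|_{\wt{L}^1_t(\dot{B}^{1+\f{d}p-s}_{p,r})}+\|u^d\|_{\wt{L}^\infty_t(\dot{B}^{-1+\f{d}p-s}_{p,r})}\|u^h\|_{\wt{L}^1_t(\dot{B}^{1+\f{d}p}_{p,r})},
\]
both of whose terms are already among those permitted on the right-hand side of \eqref{5.8}; this is precisely why \eqref{5.8} contains no weighted Chemin--Lerner norm, unlike \eqref{5.3}. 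Summing the resulting dyadic estimates against the generic sequence $(c_{j,r})\in\ell^r(\Z)$, moving the $a\na\Pi$ contribution to the left, and dividing by $1-C\|a\|_{L^\infty_t(L^\infty)\cap\wt{L}^\infty_t(B^{\f{d}q}_{q,\infty})}$ then yields \eqref{5.8}, simultaneously for $s=0$ and $s=\e$. I expect the main obstacle to be bookkeeping rather than analysis: one must check that \emph{every} factor produced by Lemmas \ref{lem5.1}--\ref{lem5.3} and by \eqref{5.4a} is of one of the shapes allowed on the right of \eqref{5.8}, and verify that the ranges $1<q\le p<2d$, $\f1q-\f1p\le\f1d$, $0<\e<\f{2d}p-1$ from Theorem \ref{thm1.2} (inherited through Proposition \ref{prop5.1}) are exactly those making the remainder terms in Bony's decompositions of $u^d\na u^h$, of $ag$, and of the quadratic velocity products summable in $j$.
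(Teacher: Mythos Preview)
Your proposal is correct and follows essentially the same approach as the paper: set $\la=0$ in \eqref{5.7}, apply Lemmas \ref{lem5.1} and \ref{lem5.3} to the quadratic $u^h$ terms and the $a$-terms, use \eqref{5.4a} of Lemma \ref{lem5.2} for the $u^d\na u^h$ contribution, and absorb the $a\na\Pi$ piece. The paper's own proof is a terse three-line reference back to Proposition \ref{prop5.1}, and you have correctly fleshed out the only substantive change, namely replacing \eqref{5.4} by \eqref{5.4a}.
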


\begin{proof} The proof of this proposition exactly follows from that of
Proposition \ref{prop5.1}. In fact, taking $\la=0$ in \eqref{5.7},
and then applying Lemma \ref{lem5.1}, Lemma \ref{lem5.3}, we arrive
at \beno
\begin{split} \|\na\Pi\|_{\wt{L}^1_t(\dot{B}^{-1+\f{d}p-s}_{p,r})}\leq
&C\Bigl\{\|a\|_{L^\infty_t(L^\infty)\cap\wt{L}^\infty_t(
B^{\f{d}q}_{q,\infty})} \|\na
\Pi\|_{\wt{L}^1_t(\dot{B}^{-1+\f{d}p-s}_{p,r})}+\|u^h\|_{\wt{L}^1_t(\dot{B}^{1+\f{d}p}_{p,r})}
\|u^d\|_{\wt{L}^\infty_t(\dot{B}^{-1+\f{d}p-s}_{p,r})}\\
&+\bigl(\mu \|a\|_{L^\infty_t(L^\infty)\cap\wt{L}^\infty_t(
B^{\f{d}q}_{q,\infty})}
+\|u^h\|_{\wt{L}^\infty_t(\dot{B}^{-1+\f{d}p}_{p,r})}\bigr)\|u\|_{\wt{L}^1_t(\dot{B}^{1+\f{d}p-s}_{p,r})}
 \Bigr\}
\end{split}
\eeno  for $t\leq T,$ from which and the fact that
$C\|a\|_{L^\infty_T(L^\infty)\cap\wt{L}^\infty_T(
B^{\f{d}q}_{q,\infty})}\leq \f12,$ we conclude the proof of
\eqref{5.8}.
\end{proof}

\subsection{ The estimate of $u^h$} We first deduce from the
transport equation of \eqref{INS} that \beq\label{tse}
\|a\|_{L^\infty_t(L^\infty)}\leq
\|a_0\|_{L^\infty}\quad\mbox{for}\quad t<T^\ast. \eeq Let $f(t),$
$u_{\la},$ $
 \Pi_{\la}$ be given by \eqref{5.2}.
  Then thanks to \eqref{INS}, we write \beno
\p_tu^h_{\la}+\la f(t)u_{\la}^h-\mu\D u^h_{\la}=-u\cdot\na
u^h_{\la}-(1+a)\na_h\Pi_{\la}+\mu a\D u^h_{\la}. \eeno Applying the
operator $\dot{\D}_j$ to the above equation and then taking the
$L^2$ inner product of the resulting equation with
$|\dot{\D}_ju^h_{\la}|^{p-2}\dot{\D}_j u^h_{\la}$ (in the case when
$p\in (1,2),$ we need to make some modification as that in
\cite{Danchin1}), we obtain \beno
\begin{split}
\f1p&\f{d}{dt}\|\dot{\D}_ju^h_{\la}(t)\|_{L^p}^p+\la
f(t)\|\dot{\D}_ju^h_{\la}(t)\|_{L^p}^p
-\mu\int_{\R^d}\D\dot{\D}_ju^h_{\la}\ \bigl|\ |\dot{\D}_ju^h_{\la}|^{p-2}\dot{\D}_j u^h_{\la}\,dx\\
=&-\int_{\R^d}\bigl(\dot{\D}_j(u\cdot\na
u^h_{\la})+\dot{\D}_j((1+a)\na_h\Pi_{\la})-\mu \dot{\D}_j(a\D
u^h_{\la})\bigr)\ \bigl|\ |\dot{\D}_ju^h_{\la}|^{p-2}\dot{\D}_j
u^h_{\la}\,dx.
\end{split}
\eeno However thanks to \cite{Danchin1} (see also \cite{Plan2}),
there exists a positive constant $\bar{c}$ so that \beno
-\int_{\R^d}\D\dot{\D}_ju^h_{\la}\ \bigl|\
|\dot{\D}_ju^h_{\la}|^{p-2}\dot{\D}_j u^h_{\la}\,dx\geq
\bar{c}2^{2j}\|\dot{\D}_ju^h_{\la}\|_{L^p}^p, \eeno whence a similar
argument as that in \cite{Danchin1} gives rise to \beq\label{5.9}
\begin{split}
\|\dot{\D}_ju^h_{\la}\|_{L^\infty_t(L^p)}&+\la\int_0^t
f(t')\|\dot{\D}_ju^h_{\la}(t')\|_{L^p}\,dt'
+\bar{c}\mu2^{2j}\|\dot{\D}_ju^h_{\la}\|_{L^1_t(L^p)}\\
\leq &\|\dot{\D}_j(u\cdot\na
u^h_{\la})\|_{L^1_t(L^p)}+\|\dot{\D}_j((1+a)\na_h\Pi_{\la})\|_{L^1_t(L^p)}+\mu
\|\dot{\D}_j(a\D u^h_{\la})\|_{L^1_t(L^p)}.
\end{split}
\eeq For $s=0$ and $\e,$ applying  Lemma \ref{lem5.2} gives
\beno\begin{split}
 \|\dot{\D}_j(u\cdot\na u^h_{\la})\|_{L^1_t(L^p)}\leq&
 \|\dot{\D}_j(u^h\cdot\na_h
u^h_{\la})\|_{L^1_t(L^p)}+\|\dot{\D}_j(u^d \pa_du^h_{\la})\|_{L^1_t(L^p)}\\
\leq&
Cc_{j,r}2^{-j(-1+\f{d}p-s)}\bigl(\|u^h\|_{\wt{L}^\infty_t(\dot{B}^{-1+\f{d}p}_{p,r})}\|u^h_{\la}\|_{\wt{L}^1_t(\dot{B}^{1+\f{d}p-s}_{p,r})}\\
&+ \|
u^h_{\la}\|_{\wt{L}^1_{t}(\dot{B}^{1+\f{d}p-s}_{p,r})}^{1-\f1{2r}}\|
u^h_{\la}\|_{\wt{L}^1_{t,f}(\dot{B}^{-1+\f{d}p-s}_{p,r})}^{\f1{2r}}\bigr).
\end{split}
\eeno And applying Lemma \ref{lem5.3} and \eqref{tse} yields \beno
\|\dot{\D}_j(a\D u^h_{\la})\|_{L^1_t(L^p)}\leq
Cc_{j,r}2^{-j(-1+\f{d}p-s)}\bigl(\|a_0\|_{L^\infty}+\|a\|_{\wt{L}^\infty_t(
B^{\f{d}q}_{q,\infty})}\bigr)\|u^h_{\la}\|_{\wt{L}^1_{t}(\dot{B}^{1+\f{d}p-s}_{p,r})},
\eeno and \beno \|\dot{\D}_j((1+a)\na_h\Pi_{\la})\|_{L^1_t(L^p)}\leq
Cc_{j,r}2^{-j(-1+\f{d}p-s)}\bigl(1+\|a_0\|_{L^\infty}+\|a\|_{\wt{L}^\infty_t(
B^{\f{d}q}_{q,\infty})}\bigr)\|\na_h\Pi_{\la}\|_{\wt{L}^1_{t}(B^{-1+\f{d}p-s}_{p,r})}.
\eeno Now let $c_1$ be a small enough positive constant, which will
be determined later on, we define $\frak{T}$ by \beq
\begin{split}
\frak{T}\eqdefa \max\Bigl\{ t\in [0,T^\ast):\
&\|u^h\|_{\wt{L}^\infty_t(\dot{B}^{-1+\f{d}p-\e}_{p,r})}+\|u^h\|_{\wt{L}^\infty_t(\dot{B}^{-1+\f{d}p}_{p,r})}
+\mu\bigl(\|a_0\|_{L^\infty}\\
&+\|a\|_{\wt{L}^\infty_t( B^{\f{d}q}_{q,\infty})}
+\|u^h\|_{\wt{L}^1_t(\dot{B}^{1+\f{d}p-\e}_{p,r})}+\|u^h\|_{\wt{L}^1_t(\dot{B}^{1+\f{d}p}_{p,r})}\bigr)\leq
c_1\mu\Bigr\}. \end{split} \label{5.16} \eeq In particular,
\eqref{5.16}
 implies that $\|a_0\|_{L^\infty}+\|a\|_{\wt{L}^\infty_t(
B^{\f{d}q}_{q,\infty})}\leq c_1$ for $t\leq\frak{T}.$ Taking $c_1$
so small that $Cc_1\leq \f12,$  \eqref{5.3} and \eqref{tse} ensures
that \beno
\begin{split}
\|\dot{\D}_j((1+a)\na_h\Pi_{\la})\|_{L^1_t(L^p)}\leq&
Cc_{j,r}2^{-j(-1+\f{d}p-s)}\Bigl\{\|u^h_{\la}\|_{\wt{L}^1_t(\dot{B}^{1+\f{d}p-s}_{p,r})}^{1-\f1{2r}}\|u^h_{\la}\|_{\wt{L}^1_{t,f}(\dot{B}^{-1+\f{d}p-s}_{p,1})}^{\f1{2r}}
\\
&+\bigl[\mu(\|a_0\|_{L^\infty}+\|a\|_{\wt{L}^\infty_t(
B^{\f{d}q}_{q,\infty})})
+\|u^h\|_{\wt{L}^\infty_t(\dot{B}^{-1+\f{d}p}_{p,r})}\bigr]\|u^h_{\la}\|_{\wt{L}^1_t(\dot{B}^{1+\f{d}p-s}_{p,r})}\\
&+\mu(\|a_0\|_{L^\infty}+\|a\|_{\wt{L}^\infty_t(
B^{\f{d}q}_{q,\infty})})
\|u^d\|_{\wt{L}^1_t(\dot{B}^{1+\f{d}p-s}_{p,r})}\Bigr\}\quad\mbox{for}\quad
t\leq \frak{T}.
\end{split}
\eeno Substituting the above estimates into \eqref{5.9}, we obtain
for $s=0$ and $\e$ that \beno
\begin{split}
\|u^h_{\la}&\|_{\wt{L}^\infty_{t}(\dot{B}^{-1+\f{d}p-s}_{p,r})}+\la\|u^h_{\la}\|_{\wt{L}^1_{t,f}(\dot{B}^{-1+\f{d}p-s}_{p,r})}
+\bar{c}\mu\|u^h_{\la}\|_{\wt{L}^1_t(\dot{B}^{1+\f{d}p-s}_{p,r})}\\
\leq&
\|u_0^h\|_{\dot{B}^{-1+\f{d}p-s}_{p,r}}+\f{\bar{c}\mu}4\|u^h_{\la}\|_{\wt{L}^1_t(\dot{B}^{1+\f{d}p-s}_{p,r})}
+C\Bigl\{\f1{\mu^{2r-1}}\|u^h_{\la}\|_{\wt{L}^1_{t,f}(\dot{B}^{-1+\f{d}p-s}_{p,r})}\\
&+\bigl[\mu(\|a_0\|_{L^\infty}+\|a\|_{\wt{L}^\infty_t(
B^{\f{d}q}_{q,\infty})})+\|u^h\|_{\wt{L}^\infty_t(\dot{B}^{-1+\f{d}p}_{p,r})}\bigr]\|u^h_{\la}\|_{\wt{L}^1_t(\dot{B}^{1+\f{d}p-s}_{p,r})}\\
&+ \mu(\|a_0\|_{L^\infty}+\|a\|_{\wt{L}^\infty_t(
B^{\f{d}q}_{q,\infty})})\|u^d\|_{\wt{L}^1_t(\dot{B}^{1+\f{d}p-s}_{p,r})}\Bigr\}\quad\mbox{for}\quad
t\leq \frak{T}.
\end{split}
\eeno
 Taking $\la=
\f{2C}{\mu^{2r-1}}$ in the above inequality and thanks to
\eqref{5.16}, we get \beq\label{5.11a}
\begin{split}
\|u^h_{\la}&\|_{\wt{L}^\infty_{t}(\dot{B}^{-1+\f{d}p-s}_{p,r})}
+\f{\bar{c}}2\mu\|u^h_{\la}\|_{\wt{L}^1_t(\dot{B}^{1+\f{d}p-s}_{p,r})}\\
\leq&
\|u_0^h\|_{\dot{B}^{-1+\f{d}p-s}_{p,r}}+C\mu\bigl(\|a_0\|_{L^\infty}+\|a\|_{\wt{L}^\infty_t(
B^{\f{d}q}_{q,\infty})}\bigr)\|u^d\|_{\wt{L}^1_t(\dot{B}^{1+\f{d}p-s}_{p,r})}\quad\mbox{for}\quad
t\leq \frak{T},
\end{split}
\eeq provided that $c_1$ in \eqref{5.16} is so small that
$Cc_1\leq\f{\bar{c}}4.$

On the other hand, it is easy to observe from \eqref{5.2} and
Definition \ref{defa.3} that \beno
\begin{split}
\bigl(\|u^h\|_{\wt{L}^\infty_{t}(\dot{B}^{-1+\f{d}p-s}_{p,r})}
+\f{\bar{c}}2\mu\|u^h&\|_{\wt{L}^1_t(\dot{B}^{1+\f{d}p-s}_{p,r})}\bigr)\exp\Bigl\{-\int_0^t\la f(t')\,dt'\Bigr\}\\
&\leq
\|u^h_{\la}\|_{\wt{L}^\infty_{t}(\dot{B}^{-1+\f{d}p-s}_{p,r})}+\f{\bar{c}}2\mu\|u^h_{\la}\|_{\wt{L}^1_t(\dot{B}^{1+\f{d}p-s}_{p,r})},
\end{split}
\eeno from which, and \eqref{aest}, \eqref{5.16}, \eqref{5.11a}, we
infer that for $s=0, \e,$ and $t\leq \frak{T}, $ \beq \label{5.17}
\begin{split}
\|u^h&\|_{\wt{L}^\infty_{t}(\dot{B}^{-1+\f{d}p-s}_{p,r})}+\f{\mu}2\bigl(\|a\|_{\wt{L}^\infty_t(
B^{\f{d}q+\f\e2}_{q,\infty})}
+\|u^h\|_{\wt{L}^1_t(\dot{B}^{1+\f{d}p-s}_{p,r})}\bigr)\\
&\leq C\bigl[\mu(\|a_0\|_{L^\infty}+\|a_0\|_{
B^{\f{d}q+\ep}_{q,\infty}})+\|u_0^h\|_{\dot{B}^{-1+\f{d}p-s}_{p,r}}\bigr]\\
&\quad\times\exp\Bigl\{C_\e\bigl(\|u^d\|_{\wt{L}^1_t(\dot{B}^{1+\f{d}p-\e}_{p,r})}+\|u^d\|_{\wt{L}^1_t(\dot{B}^{1+\f{d}p}_{p,r})}
+\int_0^t\f1{\mu^{2r-1}}\|u^d(t')\|_{\dot{B}^{-1+\f{d}p+\f1r}_{p,r}}^{2r}\,dt'\bigr)\Bigr\}.
\end{split}
\eeq

\subsection{ The estimate of $u^d$}
 By virtue of \eqref{INS}, we
get, by a similar derivation of \eqref{5.9}, that \beq \label{5.12}
\begin{split}
\|\dot{\D}_ju^d&\|_{L^\infty_t(L^p)}+\bar{c}\mu 2^{2j}\|\dot{\D}_j
u^d\|_{L^1_t(L^p)}\leq \|\dot{\D}_ju_0^d\|_{L^p}\\
&+C\Bigl(\|\dot{\D}_j(u\cdot\na
u^d)\|_{L^1_t(L^p)}+\|\dot{\D}_j((1+a)\p_d\Pi)\|_{L^1_t(L^p)}+\mu\|\dot{\D}_j(a\D
u^d)\|_{L^1_t(L^p)}\Bigr). \end{split} \eeq Applying Lemma
\ref{lem5.1} and Lemma \ref{lem5.2} gives for $s=0$ and $\e$ that
\beno
\begin{split}
\|\D_j&(u\cdot\na u^d)\|_{L^1_t(L^p)}\lesssim
2^j\|\D_j(u^hu^d)\|_{L^1_t(L^p)}+\|\D_j(u^d\dive_hu^h)\|_{L^1_t(L^p)}\\
&\lesssim
c_{j,r}2^{-j(-1+\f{d}p-s)}\bigl(\|u^h\|_{\wt{L}^\infty_t(\dot{B}^{-1+\f{d}p}_{p,r})}
\|u^d\|_{\wt{L}^1_t(\dot{B}^{1+\f{d}p-s}_{p,r})}
+\|u^h\|_{\wt{L}^1_{t}(\dot{B}^{1+\f{d}p}_{p,r})}\|u^d\|_{\wt{L}^\infty_t(\dot{B}^{-1+\f{d}p-s}_{p,r})}\bigr).
\end{split}
\eeno Whereas  thanks to \eqref{5.16} and \eqref{tse}, we get, by
applying Lemma \ref{lem5.3} and Proposition \ref{prop5.2}, that
\beno
\begin{split}
\|\dot{\D}_j&((1+a)\p_d\Pi)\|_{L^1_t(L^p)}\leq
Cc_{j,r}2^{-j(-1+\f{d}p-s)}(1+\|a_0\|_{L^\infty}+\|a\|_{\wt{L}^\infty_t( B^{\f{d}q}_{q,\infty})})\|\p_d\Pi\|_{\wt{L}^1_t(\dot{B}^{-1+\f{d}p-s}_{p,r})}\\
&\leq
Cc_{j,r}2^{-j(-1+\f{d}p-s)}\Bigl\{\|u^h\|_{\wt{L}^1_t(\dot{B}^{1+\f{d}p}_{p,r})}
\|u^d\|_{\wt{L}^\infty_t(\dot{B}^{-1+\f{d}p-s}_{p,r})}\\
&\quad+\bigl[\mu(\|a_0\|_{L^\infty}+ \|a\|_{\wt{L}^\infty_t(
B^{\f{d}q}_{q,\infty})})+\|u^h\|_{\wt{L}^\infty_t(\dot{B}^{-1+\f{d}p}_{p,r})}\bigr]\bigl(\|u^h\|_{\wt{L}^1_t(\dot{B}^{1+\f{d}p-s}_{p,r})}+\|u^d\|_{\wt{L}^1_t(\dot{B}^{1+\f{d}p-s}_{p,r})}\bigr)
 \Bigr\},
 \end{split}
 \eeno
 for $t\leq \frak{T}.$
Substituting the above estimates into \eqref{5.12} leads to \beq
\label{5.13} \begin{split}
&\|u^d\|_{\wt{L}^\infty_t(\dot{B}^{-1+\f{d}p-s}_{p,r})}+\bar{c}\mu
\|u^d\|_{\wt{L}^1_t(\dot{B}^{1+\f{d}p-s}_{p,r})} \leq
\|u_0^d\|_{\dot{B}^{-1+\f{d}p-s}_{p,r}}+C\Bigl\{\|u^h\|_{\wt{L}^1_t(\dot{B}^{1+\f{d}p}_{p,r})}
\|u^d\|_{\wt{L}^\infty_t(\dot{B}^{-1+\f{d}p-s}_{p,r})}\\
&\quad+\bigl[\mu(\|a_0\|_{L^\infty}+ \|a\|_{\wt{L}^\infty_t(
B^{\f{d}q}_{q,\infty})})+\|u^h\|_{\wt{L}^\infty_t(\dot{B}^{-1+\f{d}p}_{p,r})}\bigr]\bigl(\|u^h\|_{\wt{L}^1_t(\dot{B}^{1+\f{d}p-s}_{p,r})}+\|u^d\|_{\wt{L}^1_t(\dot{B}^{1+\f{d}p-s}_{p,r})}\bigr)
 \Bigr\},
 \end{split}
 \eeq
 for $t\leq \frak{T}$ and $s=0,\e.$

In particular, if  we take $c_1\leq\min\bigl\{\f1{2C},
\f{\bar{c}}{4C}\bigr\}$ in \eqref{5.16}, we deduce from \eqref{5.13}
that \beq\label{5.13a}
 \|u^d\|_{\wt{L}^\infty_{t}(\dot{B}^{-1+\f{d}p-s}_{p,r})}+\mu\bar{c}
\|u^d\|_{\wt{L}^1_t(\dot{B}^{1+\f{d}p-s}_{p,r})}\leq
2\|u_0^d\|_{\dot{B}^{-1+\f{d}p-s}_{p,r}}+c_2\mu \eeq for $t\leq
\frak{T}$ and $s=0,\e.$

\subsection{ The proof of Theorem \ref{thm1.2}}

According to the arguments at the beginning of this section,  we
only need to prove that $\frak{T}=\infty$ under the assumption of
\eqref{small2}. Otherwise, if $\frak{T}<T^\ast<\infty,$ we first
deduce from \eqref{5.13a} that \beq\label{5.18}\begin{split}
\|u^d\|_{L^{2r}_t(\dot{B}^{-1+\f{d}p+\f1r}_{p,r})} \leq& C
\|u^d\|_{\wt{L}^{2r}_t(\dot{B}^{-1+\f{d}p+\f1r}_{p,r})}\leq C
\|u^d\|_{\wt{L}^{1}_t(\dot{B}^{1+\f{d}p}_{p,r})}^{\f1{2r}}\|u^d\|_{\wt{L}^\infty_t(\dot{B}^{-1+\f{d}p}_{p,r})}^{1-\f1{2r}}\\
\leq&C
\mu^{-\f1{2r}}(\|u_0^d\|_{\dot{B}^{-1+\f{d}p}_{p,r}}+c_2\mu)\quad\mbox{for}\quad
t\leq\frak{T}.
\end{split}\eeq
Substituting \eqref{5.13a} and \eqref{5.18} into \eqref{5.17} gives
rise to
 \beno
\begin{split}
&\|u^h\|_{\wt{L}^\infty_{t}(\dot{B}^{-1+\f{d}p-\e}_{p,r})}+\|u^h\|_{\wt{L}^\infty_{t}(\dot{B}^{-1+\f{d}p}_{p,r})}
+\mu\bigl(\|a\|_{\wt{L}^\infty_t(B^{\f{d}q+\f\e2}_{q,\infty})}
+\|u^h\|_{\wt{L}^1_t(\dot{B}^{1+\f{d}p-\e}_{p,r})}+\|u^h\|_{\wt{L}^1_t(\dot{B}^{1+\f{d}p}_{p,r})}\bigr)\\
&\leq \frak{C}_1\bigl(\mu\|a_0\|_{L^\infty\cap
B^{\f{d}q+\e}_{q,\infty}}+\|u_0^h\|_{\dot{B}^{-1+\f{d}p-\e}_{p,r}\cap\dot{B}^{-1+\f{d}p}_{p,r}}\bigr)\exp\Bigl\{\f{\frak{C}_2}{\mu^{2r}
}\bigl(\|u_0^d\|_{\dot{B}^{-1+\f{d}p-\e}_{p,r}\cap
\dot{B}^{-1+\f{d}p}_{p,r} }^{2r}\Bigr\}
\end{split}
\eeno for $t\leq \frak{T}$ and some positive constants $\frak{C}_1,
\frak{C}_2$ which depends on $\bar{c},$  $c_1$ and $\e.$ In
particular, if we take $C_{r,\ep}$ large enough and $c_0$
sufficiently small in \eqref{small2}, the above inequality implies
that for $\d$ given by \eqref{small2} \beno
\begin{split}
\|u^h&\|_{\wt{L}^\infty_{t}(\dot{B}^{-1+\f{d}p-\e}_{p,r})}+\|u^h\|_{\wt{L}^\infty_{t}(\dot{B}^{-1+\f{d}p}_{p,r})}
+\mu\bigl(\|a_0\|_{L^\infty}+\|a\|_{\wt{L}^\infty_t(
B^{\f{d}q+\f\e2}_{q,\infty})}\\
&+\|u^h\|_{\wt{L}^1_t(\dot{B}^{1+\f{d}p-\e}_{p,r})}+\|u^h\|_{\wt{L}^1_t(\dot{B}^{1+\f{d}p}_{p,r})}\bigr)\leq
C\delta\leq \f{c_1}2\mu\quad\mbox{ for}\quad t\leq \frak{T},
\end{split} \eeno which contradicts with \eqref{5.16}. Whence we conclude
that $\frak{T}=T^\ast=\infty,$ and there holds \eqref{5.20}. This
completes the proof of Theorem \ref{thm1.2}.\ef

\renewcommand{\theequation}{\thesection.\arabic{equation}}
\setcounter{equation}{0}
\appendix

\setcounter{equation}{0}
\section{Littlewood-Paley analysis}\label{abbendixb}

The proof  of Theorem \ref{thm1.2} requires Littlewood-Paley
decomposition. Let us briefly explain how it may be built in the
case $x\in\R^d$ (see e.g. \cite{BCD}). Let $\varphi$ be a smooth
function  supported in the ring $\mathcal{C}\eqdefa \{
\xi\in\R^d,\frac{3}{4}\leq|\xi|\leq\frac{8}{3}\}$ and  $\chi$ be a
smooth function supported in the ball $\cB\eqdefa \{ \xi\in\R^d,\,
|\xi|\leq\frac{4}{3}\}$ such that
\begin{equation*}
 \chi(\xi)+\sum_{q\geq 0}\varphi(2^{-q}\xi)=1\quad\hbox{for all}\quad \xi\in\R^d\quad
  \mbox{and}\quad \sum_{j\in\Z}\varphi(2^{-j}\xi)=1 \quad\hbox{for}\quad \xi\neq 0.
\end{equation*}
Then for $u\in{\mathcal S}'(\R^d),$ we set
\begin{equation}\label{dydic}
\begin{split}
&\forall\ j\in\Z,\quad \dot\Delta_j
u\eqdefa\varphi(2^{-j}\textnormal{D}) u\hspace{1cm}\mbox{and}
\hspace{1cm} \dot S_j u\eqdefa \chi(2^{-j}D)u,\\
&\forall\ q\in\N,\quad \Delta_q u=\varphi(2^{-q}\textnormal{D})
u\hspace{1cm}\mbox{and} \hspace{1cm} S_q u=\sum_{\ell\leq
q-1}\Delta_{\ell}u,\quad \Delta_{-1} u=\chi(\textnormal{D}) u.
\end{split}
\end{equation}
We have the formal decomposition
\begin{equation}\label{decom}
u=\sum_{j\in\Z}\dot\Delta_j \,u,\quad\forall\,u\in {\mathcal
{S}}'(\R^d)/{\mathcal{P}}[\R^d]\ \ \ \mbox{and}\ \ \
u=\sum_{q\geq-1}\Delta_q \,u,\quad\forall\,u\in {\mathcal
{S}}'(\R^d),
\end{equation}
where ${\mathcal{P}}[\R^d]$ is the set of polynomials. Moreover, the
Littlewood-Paley decomposition satisfies the property of almost
orthogonality:
\begin{equation}\label{Pres_orth}
\dot\Delta_j\dot\Delta_\ell u\equiv 0 \quad\mbox{if}\quad|
j-\ell|\geq 2 \quad\mbox{and}\quad\dot\Delta_j(\dot
S_{\ell-1}u\dot\Delta_\ell u) \equiv 0\quad\mbox{if}\quad| j-q|\geq
5.
\end{equation}

We recall now the definition of homogeneous Besov spaces and
Bernstein type inequalities from \cite{BCD}.
\begin{defi}\label{def1.1} {\sl  Let $(p,r)\in[1,+\infty]^2,$ $s\in\R$ and $u\in{\mathcal
S}_h'(\R^3),$ which means that $u\in\cS'(\R^3)$ and
$\lim_{j\to-\infty}\|\dot{S}_ju\|_{L^\infty}=0,$ we define
\beno\begin{split} & \|u\|_{B^s_{p,r}}\eqdefa\Big(2^{qs}\|\Delta_q
u\|_{L^{p}}\Big)_{\ell ^{r}}\quad\mbox{and}\quad \dot
B^s_{p,r}(\R^d)\eqdefa \big\{u\in{\mathcal S}_h'(\R^d)\;\big|\; \|
u\|_{\dot B^s_{p,r}}<\infty\big\}.\end{split}\eeno}
\end{defi}

Inhomogeneous Besov spaces $B^s_{p,r}(\R^d)$ can be defined in a
similar way so that
$$
\|u\|_{B^s_{p,r}}\eqdefa\Bigl(2^{qs}\|\Delta_q
u\|_{L^{p}}\Bigr)_{\ell ^{r}(\N)}.
$$

\begin{lem}\label{lem2.1} {\sl Let $\cB$ be a ball   and $\cC$ a ring of $\R^d.$
 A constant $C$ exists so
that for any positive real number $\d,$ any non negative integer
$k,$ any smooth homogeneous function $\sigma$ of degree $m,$ and any
couple of real numbers $(a, \; b)$ with $ b \geq a \geq 1,$ there
hold
\begin{equation}
\begin{split}
&\Supp \hat{u} \subset \d \mathcal{B} \Rightarrow \sup_{|\alpha|=k}
\|\pa^{\alpha} u\|_{L^{b}} \leq  C^{k+1}
\d^{k+ 3(\frac{1}{a}-\frac{1}{b} )}\|u\|_{L^{a}},\\
& \Supp \hat{u} \subset \d \mathcal{C} \Rightarrow C^{-1-k}\d^{
k}\|u\|_{L^{a}}\leq \sup_{|\alpha|=k}\|\partial^{\alpha} u\|_{L^{a}}
\leq
C^{1+k}\d^{ k}\|u\|_{L^{a}},\\
& \Supp \hat{u} \subset \d\mathcal{C} \Rightarrow \|\sigma(D)
u\|_{L^{b}}\leq C_{\sigma, m} \d^{ m+3(\frac{1}{a}-\frac{1}{b}
)}\|u\|_{L^{a}}.
\end{split}\label{2.1}
\end{equation}}
\end{lem}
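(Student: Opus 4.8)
The final statement to prove is Lemma \ref{lem2.1}, which collects the classical Bernstein inequalities for functions with Fourier support in a dilated ball or ring of $\R^d$. The plan is to follow the standard Littlewood--Paley argument (as in Lemma 2.1 of \cite{BCD}), reducing everything to the scale $\d=1$ by a dilation and then using that the relevant Fourier multipliers have Schwartz kernels.

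\textbf{Step 1: Reduction to $\d=1$.} First I would observe that both sides of each inequality scale homogeneously under $u(x)\mapsto u(\d x)$: if $\Supp\hat u\subset\d\mathcal B$ then $v\eqdefa u(\d^{-1}\cdot)$ has $\Supp\hat v\subset\mathcal B$, and $\|\pa^\alpha u\|_{L^b}$, $\|u\|_{L^a}$, $\|\sigma(D)u\|_{L^b}$ all pick up explicit powers of $\d$. A direct bookkeeping of these powers shows it suffices to prove the three inequalities for $\d=1$ with constants independent of $u$. The exponent $3$ appearing in the statement should in fact be $d$ (this looks like a typo inherited from the three-dimensional case); the argument gives $\d^{k+d(1/a-1/b)}$, and I would simply carry $d$ through.

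\textbf{Step 2: The ball case and the ring case via convolution kernels.} For the first inequality, pick a smooth compactly supported $\widetilde\chi$ equal to $1$ on $\mathcal B$; then $\hat u=\widetilde\chi\,\hat u$, so $\pa^\alpha u=(\pa^\alpha\check{\widetilde\chi})\ast u$ up to constants, wait—more precisely $\widehat{\pa^\alpha u}=(i\xi)^\alpha\widetilde\chi(\xi)\hat u(\xi)$, and $g_\alpha\eqdefa\mathcal F^{-1}((i\xi)^\alpha\widetilde\chi)$ is a Schwartz function, in particular $g_\alpha\in L^c$ for $1/c=1+1/b-1/a\in[0,1]$. Young's inequality then gives $\|\pa^\alpha u\|_{L^b}\le\|g_\alpha\|_{L^c}\|u\|_{L^a}$, and taking the supremum over $|\alpha|=k$ (finitely many multi-indices) yields the constant $C^{k+1}$ after tracking the dependence on $k$. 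For the third inequality one argues identically with $\sigma(\xi)\widetilde\varphi(\xi)$ in place of $(i\xi)^\alpha\widetilde\chi(\xi)$, where $\widetilde\varphi$ is supported in a slightly larger ring and equals $1$ on $\mathcal C$; here $\sigma$ smooth and homogeneous of degree $m$ away from $0$ makes $\sigma\widetilde\varphi$ smooth and compactly supported, hence its inverse Fourier transform is Schwartz. The upper bound in the second inequality is the special case $b=a$, $\sigma(\xi)=(i\xi)^\alpha$ of this, up to the dilation factor.

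\textbf{Step 3: The reverse (lower) bound in the ring case.} For $C^{-1-k}\d^k\|u\|_{L^a}\le\sup_{|\alpha|=k}\|\pa^\alpha u\|_{L^a}$, I would use the classical trick: choose a smooth function $\Theta$ supported in a ring, equal to $1$ on $\mathcal C$, and homogeneous of degree $0$ on $\mathcal C$; then on $\mathcal C$ we can write $1=\sum_{|\alpha|=k}(i\xi)^\alpha\,\overline{(i\xi)^\alpha}|\xi|^{-2k}$ (since $|\xi|^{2k}=\sum_{|\alpha|=k}\binom{k}{\alpha}\xi^{2\alpha}$ up to combinatorial constants, giving a homogeneous-degree-$(-k)$ multiplier $\psi_\alpha$ with $(i\xi)^\alpha\psi_\alpha$ summing to $1$). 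Multiplying $\hat u$ by this partition and applying the ring-case upper bound of Step 2 to each $\psi_\alpha(D)\widetilde\varphi(D)\pa^\alpha u$, one recovers $\|u\|_{L^a}\lesssim\sum_{|\alpha|=k}\|\pa^\alpha u\|_{L^a}$, and then bound the sum by its largest term times the number of multi-indices, folding that count into $C^{1+k}$.

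\textbf{Main obstacle.} The only genuinely delicate point is obtaining the \emph{geometric} dependence $C^{k+1}$ (resp.\ $C^{1+k}$) on $k$ rather than a worse dependence; this requires care in estimating $\|g_\alpha\|_{L^c}=\|\mathcal F^{-1}((i\xi)^\alpha\widetilde\chi)\|_{L^c}$ uniformly over $|\alpha|=k$, which one does by noting the supports are fixed and differentiating under the integral a controlled number of times, together with the elementary fact that the number of multi-indices of length $k$ in $d$ variables grows only polynomially in $k$. Everything else is routine bookkeeping of dilation factors, and I would not spell those computations out in detail, referring instead to \cite{BCD} for the precise constants.
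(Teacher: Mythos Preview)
Your proof sketch is correct and is precisely the standard argument from \cite{BCD} (Lemma 2.1 there). Note, however, that the paper does not actually prove this lemma: it is stated in the Appendix as a quotation of the Bernstein inequalities from \cite{BCD}, so there is no ``paper's own proof'' to compare against. Your observation that the exponent $3$ should be $d$ (a vestige of the three-dimensional setting) is also correct.
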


We shall frequently use  Bony's decomposition  from \cite{Bo} in the
homogeneous context:
\begin{equation}
\begin{split}
&uv=\dot{T}_u v+\dot{\cR}(u,v)=\dot{T}_u v+\dot{T}_v u+\dot{R}(u,v),
\end{split}\label{bony}
\end{equation}
where
\begin{equation*}
\begin{split}
&\dot{T}_u v\eqdefa\sum_{j\in \mathbb{Z}}\dot S_{j-1}u\dot\Delta_j
v,\qquad
\dot{\cR}(u,v)\eqdefa\sum_{j\in\Z}\dot\Delta_j u \dot S_{j+2}v,\\
&\dot{R}(u,v)\eqdefa\sum_{j\in\Z}\dot\Delta_j u
\widetilde{\dot\Delta}_{j}v\quad  \mbox{with}\quad
\widetilde{\dot\Delta}_{j}v\eqdefa \sum_{|j'-j|\leq
1}\dot\Delta_{j'}v.
\end{split}
\end{equation*}

In  order to obtain a better description of the regularizing effect
of the transport-diffusion equation, we need to use Chemin-Lerner
type spaces $\widetilde{L}^{\lambda}_T(\dot B^s_{p,r}(\R^d)).$
\begin{defi}\label{chaleur+}
Let $(r,\lambda,p)\in[1,\,+\infty]^3$ and $T\in (0,\,+\infty]$. We
define $\widetilde{L}^{\lambda}_T(\dot B^s_{p\,r}(\R^d))$ as the
completion of $C([0,T]; \,\cS(\R^d))$ by the norm
$$
\| u\|_{\widetilde{L}^{\lambda}_T(\dot B^s_{p,r})} \eqdefa
\Big(\sum_{j\in\Z}2^{jrs} \Big(\int_0^T\|\dot\Delta_j\,u(t)
\|_{L^p}^{\lambda}\, dt\Big)^{\frac{r}{\lambda}}\Big)^{\frac{1}{r}}
<\infty.
$$
with the usual change if $r=\infty.$  For short, we just denote this
space by $\widetilde{L}^{\lambda}_T(\dot B^s_{p,r}).$
\end{defi}

As one can not use Gronwall's inequality in the Chemin-Lerner type
spaces,  we \cite{PZ1, PZ2} introduced  weighted Chemin-Lerner norm
in the context of anisotropic Besov spaces. To prove \ref{thm1.2},
we need the following version of weighted Chemin-Lerner in the
context of isentropic Besov spaces:

\begin{defi}\label{defa.3}
Let $(r, p)\in[1,\,+\infty]^2$ and $T\in(0,\,+\infty]$. Let $0\leq
f(t)\in L^1(0,T).$ We define the norm
$\wt{L}_{T,f}^1(\dot{B}^s_{p,r})$ as \beno
\|u\|_{\wt{L}_{T,f}^1(\dot{B}^s_{p,r})}\eqdefa
\Bigl(\sum_{j\in\Z}2^{jrs}\|f(t)\dot\Delta_j\,u(t)
\|_{L^1_T(L^p)}^{r}\Bigr)^{\frac{1}{r}}. \eeno
\end{defi}

\noindent {\bf Acknowledgments.} Part of this work was done when M.
Paicu was visiting Morningside Center of the Chinese Academy of
Sciences in the Spring of 2012. We appreciate the hospitality of MCM
and the financial support from the Chinese Academy of Sciences. P.
Zhang is partially supported by NSF of China under Grant 10421101
and 10931007, the one hundred talents' plan from Chinese Academy of
Sciences under Grant GJHZ200829 and innovation grant from National
Center for Mathematics and Interdisciplinary Sciences.

\end{document}